
\documentclass[a4paper, 11pt, english]{article}

\usepackage[geometry]{ifsym}

\usepackage{pgf}
\usepackage{amsmath,amssymb}
\usepackage{mathrsfs}
\usepackage{bbm}
\usepackage{verbatim}
\usepackage{epsfig}
\usepackage{graphicx}
\usepackage[all]{xy}
\usepackage{color}
\usepackage{import}
\usepackage{bussproofs}
\usepackage{qtree}
\usepackage{tree-dvips}
\usepackage{lingmacros}
\usepackage{amsfonts}
\usepackage{mathdots}
\usepackage{amssymb}
\usepackage{pifont}
\usepackage[safe]{tipa}
\usepackage{newlfont}
\usepackage{latexsym}
\usepackage{multirow}
\usepackage{array,cmll}
\usepackage{textcomp}
\usepackage{accents}
\usepackage{rotating}
\usepackage{logreq}

\usepackage{hyperref}


  \usepackage{amsmath}
  \usepackage{amsthm}
  \usepackage{latexsym}
  \usepackage{amscd}
  \usepackage{amssymb}
  \usepackage{bussproofs}
  \usepackage{txfonts}

  \usepackage{tikz}
  \usepackage[position=top]{subfig}

  \usepackage{leqno}

\usepackage{extpfeil}

\input theorem.sty


\def\fCenter{{\mbox{$\ \vdash\ $}}}

\EnableBpAbbreviations


\newcommand{\commment}[1]{}


\def\aol{\rule[0.5865ex]{1.38ex}{0.1ex}}


\def\pdra{\mbox{$\,>\mkern-8mu\raisebox{-0.065ex}{\aol}\,$}}
\def\pdla{\mbox{\rotatebox[origin=c]{180}{$\,>\mkern-8mu\raisebox{-0.065ex}{\aol}\,$}}}

\def\gSCRA{\mbox{$\succ$}}
\def\gSCLA{\mbox{$\prec$}}

\def\gscra{\mbox{$\,\raisebox{-0.065ex}{\aol}\mkern-5.5mu\raisebox{0.155ex}{${\scriptstyle{\succ}}$}\,$}}
\def\gscla{\mbox{\rotatebox[origin=c]{180}{$\,\raisebox{-0.065ex}{\aol}\mkern-5.5mu\raisebox{0.155ex}{${\scriptstyle{\succ}}$}\,$}}}



\def\mANDORatom#1{\hbox{\hbox to 0pt{$#1\TriangleUp$\hss}$#1\TriangleDown$}}

\newcommand{\mcAND}{%
\mathrel{\ooalign{\raisebox{-0.39ex}{$\mbox{\TriangleUp}$}\cr\kern4.2pt{\raisebox{-0.13ex}{$\cdot$}}}}}
\newcommand{\mcand}{%
\mathrel{\ooalign{$\vartriangle$\cr\kern1.99pt{\raisebox{-0.17ex}{$\cdot$}}}}}

\newcommand{\mAND}{\raisebox{-0.39ex}{\mbox{\,\TriangleUp\,}}}
\newcommand{\mand}{\vartriangle}



\newcommand{\nAND}{%
\mathrel{\ooalign{$\mbox{\TriangleUp}$\cr\kern0pt$\mbox{\rotatebox[origin=c]{180}{\TriangleUp}}$}}}

\newcommand{\nand}{%
\mathrel{\ooalign{$\vartriangle$\cr\kern0pt$\triangledown$}}}

\newcommand{\mcBAND}{%
\mathrel{\ooalign{\raisebox{-0.39ex}{$\mbox{\FilledTriangleUp}$}\cr\kern4.2pt{\raisebox{-0.13ex}{${\color{white}\cdot}$}}}}}
\def\mBAND{\raisebox{-0.39ex}{\mbox{\,\FilledTriangleUp\,}}}
\def\mband{\mbox{$\mkern+2mu\blacktriangle\mkern+2mu$}}

\newcommand{\mcband}{%
\mathrel{\ooalign{$\blacktriangle$\cr\kern1.99pt{\raisebox{-0.17ex}{${\color{white}\cdot}$}}}}}

\newcommand{\mcRA}{%
\mathrel{\ooalign{
                  \raisebox{-0.3ex}{$\rotatebox[origin=c]{-90}{$\mbox{{\TriangleUp}}$}$}
                                                                            \cr\kern2.7pt{\raisebox{0.2ex}{$\cdot\mkern1.3mu$}}}}}

\def\mRA{\mbox{\,\raisebox{-0.39ex}{\rotatebox[origin=c]{-90}{\TriangleUp}}\,}}
\newcommand{\mra}{\mbox{$\,-{\mkern-3mu\vartriangleright}\,$}}
\newcommand{\mbra}{\mbox{$\,-{\mkern-3mu\blacktriangleright}\,$}}

\newcommand{\mcra}{%
\mathrel{\ooalign{$\,{\vartriangleright\,}$\cr\kern3pt{\raisebox{0ex}{$\cdot$}}}}}

\newcommand{\mcraline}{%
-{\mkern-6mu{\mathrel{\ooalign{$\,{\vartriangleright\,}$\cr\kern3pt{\raisebox{0ex}{$\cdot$}}}}}}}

\newcommand{\mdraline}{%
{\mathrel{\ooalign{$\,{\vartriangleright\,}$\cr\kern3pt{\raisebox{0ex}{$\cdot$}}}}}{\mkern-6mu}-}

\newcommand{\cra}{%
\mathrel{\ooalign{$\,-{\mkern-3mu\vartriangleright\,}$\cr\kern8pt{\raisebox{0ex}{$\cdot$}}}}}

\def\mBRA{\mbox{\,\raisebox{-0.39ex}{\rotatebox[origin=c]{-90}{\FilledTriangleUp}}\,}}
\newcommand{\mcBRA}{%
\mathrel{\ooalign{
                  \raisebox{-0.3ex}{$\rotatebox[origin=c]{-90}{$\mbox{\FilledTriangleUp}$}$}
                                                                            \cr\kern2.7pt{\raisebox{0.2ex}{${\color{white}\cdot}$}}}}}

\newcommand{\mcbra}{%
\mathrel{\ooalign{$\,-{\mkern-3mu\blacktriangleright\,}$\cr\kern8pt{\raisebox{0ex}{$\cdot$}}}}}

\def\mLA{\mbox{\,\raisebox{-0.39ex}{\rotatebox[origin=c]{90}{\TriangleUp}}\,}}
\newcommand{\mcLA}{%
\mathrel{\ooalign{
                  \raisebox{-0.3ex}{$\rotatebox[origin=c]{90}{$\mbox{\TriangleUp}$}$}
                                                                                     \cr\kern5.5pt{\raisebox{0.2ex}{$\cdot$}}
                                                                                                                              }}}

\def\nla{\mbox{$\,\vartriangleleft\,$}}

\newcommand{\mcla}{%
\mathrel{\ooalign{$\,{\vartriangleleft\,}$\cr\kern5pt{\raisebox{0ex}{$\cdot$}}}}}

\newcommand{\mclaline}{%
-{\mkern-6mu{\mathrel{\ooalign{$\,{\vartriangleleft\,}$\cr\kern5pt{\raisebox{0ex}{$\cdot$}}}}}}}

\def\nla{\mbox{$\,\vartriangleleft\,$}}

\def\mSLA{\mbox{\,\raisebox{-0.4ex}{\rotatebox[origin=c]{90}{\TriangleUp}}\raisebox{0.20ex}{$\mkern-2mu\thicksim$}\,}}

\def\msla{\mbox{$\,\vartriangleleft{\mkern-8mu\sim}\,$}}

\def\mBLA{\mbox{\,\raisebox{-0.39ex}{\rotatebox[origin=c]{90}{\FilledTriangleUp}}\,}}
\newcommand{\mcBLA}{%
\mathrel{\ooalign{
                  \raisebox{-0.3ex}{$\rotatebox[origin=c]{90}{$\mbox{\FilledTriangleUp}$}$}
                                                                                     \cr\kern5.5pt{\raisebox{0.2ex}{${\color{white}\cdot}$}}
                                                                                                                                            }}}

\def\nbla{\mbox{$\,\blacktriangleleft\,$}}

\def\mSBLA{\mbox{\,\raisebox{-0.4ex}{\rotatebox[origin=c]{90}{\FilledTriangleUp}}\raisebox{0.20ex}{$\mkern-2mu\thicksim$}\,}}

\def\msbla{\mbox{$\,\blacktriangleleft{\mkern-8mu\sim}\,$}}



  %
  \numberwithin{equation}{section}



\newcommand{\ls}{\lbrack}
\newcommand{\rs}{\rbrack}
\newcommand{\lc}{\langle}
\newcommand{\rc}{\rangle}





%
\newcommand{\pand}{\wedge}

\newcommand{\por}{\vee}

\newcommand{\pra}{\rightarrow}
\newcommand{\plra}{\leftrightarrow}
\newcommand{\pla}{\leftarrow}
%
%

%
\newcommand{\gI}{%
\mathrel{\ooalign{$\mbox{T}$\cr\kern0pt$\mbox{\rotatebox[origin=c]{180}{T}}$}}}
\newcommand{\gtop}{\tau}
\newcommand{\gbot}{\rotatebox[origin=c]{180}{$\tau$}}
\newcommand{\dwn}{\rotatebox[origin=c]{180}{$\wn$}}
\newcommand{\DWN}{\rotatebox[origin=c]{180}{{\boldmath{$\wn$}}}}
\newcommand{\WN}{\rotatebox[origin=c]{}{{\boldmath{$\wn$}}}}
\newcommand{\gC}{\between}
\newcommand{\gsc}{\mbox{$\,;\,$}}
\newcommand{\gSC}{\mbox{\boldmath{{\Large{$\,;\,$}}}}}
\newcommand{\gand}{\cap}
\newcommand{\gAND}{\mbox{$\,\bigcap\,$}}
\newcommand{\gor}{\cup}

\def\gra{\mbox{$\,{\raisebox{0.065ex}{\aol}{\mkern-1.4mu{\rotatebox[origin=c]{-90}{\raisebox{0.12ex}{$\gand$}}}}}\,$}}
\def\gRA{\mbox{$\,\,{{\rotatebox[origin=c]{-90}{\raisebox{0.12ex}{$\gAND$}}}}\,\,$}}

\def\gla{\mbox{\rotatebox[origin=c]{-180}{$\,{\raisebox{0.065ex}{\aol}{\mkern-1.4mu{\rotatebox[origin=c]{-90}{\raisebox{0.12ex}{$\gand$}}}}}\,$}}}
\def\gLA{\mbox{\rotatebox[origin=c]{180}{$\,\,{{\rotatebox[origin=c]{-90}{\raisebox{0.12ex}{$\gAND$}}}}\,\,$}}}

\def\gdra{\mbox{$\,{\rotatebox[origin=c]{-90}{\raisebox{0.12ex}{$\gand$}}{\mkern-1mu\raisebox{0.065ex}{\aol}}}\,$}}

\def\gdla{\mbox{\rotatebox[origin=c]{-180}{$\,{\rotatebox[origin=c]{-90}{\raisebox{0.12ex}{$\gand$}}{\mkern-1mu\raisebox{0.065ex}{\aol}}}\,$}}}

%

%
%

%

%

%

%

%

%

%

%

%

%

%

\def\aga{\texttt{a}}

\def\aol{\rule[0.5865ex]{1.38ex}{0.1ex}}

%

%
%

%

%
%
%




\newcommand{\RESalphaBox}{\,\rotatebox[origin=c]{90}{$[\rotatebox[origin=c]{-90}{$\alpha$}]$}\,}
\newcommand{\RESalphaplusBox}{\,\rotatebox[origin=c]{90}{$\Big[\rotatebox[origin=c]{-90}{$\,\,\alpha^+$}\Big]$}\,}

\newcommand{\RESalphaDia}{\,\rotatebox[origin=c]{90}{$\langle\rotatebox[origin=c]{-90}{$\alpha$}\rangle$}\,}
\newcommand{\RESalphaplusDia}{\,\rotatebox[origin=c]{90}{$\Big\langle\rotatebox[origin=c]{-90}{$\,\,\alpha^+$}\Big\rangle$}\,}

\makeatletter
\newcommand{\WKnowProxy}[2]{%
  {\mathbin{\ooalign{$#1\circ#2 $\cr\hidewidth
   \raise.155ex\hbox{$#1{\scriptstyle{\ast}}#2$}\hidewidth\cr  }}}}
\makeatother

\makeatletter
\newcommand{\BKnowProxy}[2]{%
  {\mathbin{\ooalign{$#1\bullet#2 $\cr\hidewidth
   \raise.155ex\hbox{$#1{\scriptstyle{\color{white}{\ast}}}#2$}\hidewidth\cr  }}}}
\makeatother

\makeatletter

\makeatother

\makeatletter

\makeatother

\makeatletter

\makeatother

\makeatletter

\makeatother

\newcommand{\fns}{\footnotesize}
\newcommand{\mc}{\multicolumn}



\title{Multi-type display calculus for Propositional Dynamic Logic}
\date{}

\author{Sabine Frittella\footnote{Laboratoire d'Informatique Fondamentale de Marseille (LIF) - Aix-Marseille Universit\'{e}.}\;\; Giuseppe Greco\footnote{Department of Values, Technology and Innovation - TU Delft.}\\ Alexander Kurz\footnote{Department of Computer Science - University of Leicester.}\;\; Alessandra Palmigiano\footnote{Department of Values, Technology and Innovation - TU Delft. The research of the second and fourth author has been made possible by the Vidi grant 016.138.314 of the Netherlands Organization for Scientific Research (NWO).}}

\newtheorem{proposition}[theorem]{Proposition}
\newtheorem{corollary}[theorem]{Corollary}

\begin{document}
\maketitle


\begin{abstract}
We introduce a multi-type display calculus for Propositional Dynamic Logic (PDL). This calculus is complete w.r.t.\ PDL, and enjoys Belnap-style cut-elimination and subformula property.\\
\emph{Keywords}: display calculus, propositional dynamic logic, multi-type proof-system.\\
{\em Math.\ Subject Class.\ 2010:} 	03B42,  03B20, 03B60, 03B45, 03F03, 03G10, 03A99.
\end{abstract}

\tableofcontents

\section{Introduction}
\label{PDL:sec:Introduction}
   The present paper introduces a cut-free, multi-type display calculus for the full language of Propositional Dynamic Logic (PDL). Methodologically, the contribution of the present paper follows the `multi-type' approach introduced in \cite{Multitype}. This approach is inspired by ideas developed in the area of proof-theoretic semantics (cf.\ \cite{GAV} for more on this point), and aims at facilitating the build-up of a modular and uniform proof-theory of dynamic logics. In \cite{Multitype}, the dynamic epistemic logic (DEL) of Baltag-Moss-Solecki is accounted for, by embedding the original DEL-language into a more expressive language in which not only formulas are generated from formulas and actions (as it happens in the symbol $\langle\alpha \rangle A$) and formulas are generated from formulas and agents (as it happens in the symbol $\langle\aga \rangle A$), but also {\em actions} are generated from the interactions between {\em agents} and {\em actions}.

The key idea of this approach is that the new language is {\em multi-typed}: namely, actions, agents and formulas are all first-class citizens of the language, each of them belongs to at least one type, and each generation step mentioned above is accounted for in an explicit way via special connectives taking arguments of {\em different types}. Specifically, the multi-type DEL-setting of \cite{Multitype} consists of the following types: $\mathsf{Ag}$ for agents, $\mathsf{Fnc}$ for functional actions, $\mathsf{Act}$ for general actions, and $\mathsf{Fm}$ for formulas. An important technical solution adopted in the DEL-setting, which will be relevant also to the present treatment of PDL, is that (epistemic) actions are assigned {\em two} distinct types: indeed, $\mathsf{Fnc}$ is the basic type of epistemic actions, and essentially reflects the action-parameters of the dynamic modal connectives in the original DEL-language, whereas $\mathsf{Act}$ is the type of actions as they appear to a given agent, and they have no explicit counterpart in the original language.

The introduction of different types lays the ground for overcoming some of the difficulties which typically make the proof-theoretic treatment of dynamic logics not straightforward, and have hindered the smooth transfer of results from one dynamic logic to another. In the specific case of PDL, the main hurdle lies in the encoding of the induction axiom into a structural rule. Due to the inductive `loop' (by which we mean a given parametric formula occurring both in antecedent and in consequent position), the induction axiom cannot straightforwardly be captured at the structural level, given that structures mean different things depending on their (antecedent or consequent) position in a sequent. However, as it is well known,(cf.\ \cite{Kozen2000}), the  induction axiom reflects, in the context of {\em formulas}, the fact that the \emph{positive iteration} + (resp.\ \emph{iteration}, also known as \emph{Kleene star} $\ast$) semantically is the (reflexive and) transitive closure operator {\em on actions}. This fact can (and should) be captured by finitary (structural) rules {\em purely in the context of actions}, that is, without formulas playing a mediating role. As in the case of DEL (cf.\ \cite{Multitype}), the multi-type environment can be used to provide the additional expressivity needed to capture the informational contents of various dynamic axioms in the context most naturally suited to support them.

Summing up, the multi-type methodology aims at generating calculi enjoying the following features, as witnessed e.g.\ by the calculus defined in \cite{Multitype}: (1) a neat division of labour, required by proof-theoretic semantics, between operational and structural rules; (2) a neat division of labour  between structural rules describing the properties {\em pertinent to each type}, and structural rules describing the {\em interaction} between different types; (3) all rules enjoying closure under uniform substitution of parametric operational terms for arbitrary structures within each type. 

\noindent Feature (3) is  crucial to the definition of the {\em multi-type} version of {\em  properly displayable calculi} (cf.\ Wansing's definition \cite[Section 4.1]{Wa98}. See Section \ref{PDL:ssec:quasi-def} for more on this). The cut elimination result for any such calculus  follows straightforwardly from the corresponding Belnap-style metatheorem \cite[Theorem 3.3]{Multitype}.

The multi-type display calculus for PDL introduced in the present paper has a design similar to the one in \cite{Multitype}, from which features analogous to (1) and (3) above will follow. This calculus is shown to be properly displayable (cf.\ Definition \ref{PDL:def:quasi-displayable multi-type calculus}), and hence  its cut elimination and subformula property smoothly follow from the Belnap-style metatheorem \cite[Theorem 3.3]{Multitype}. In this calculus, both actions and formulas are first-class citizens; each of them belongs to at least one type; connectives similar to the ones introduced in \cite{Multitype} account for the generation of compound terms taking arguments of different types. Specifically, the multi-type PDL-setting consists of the following types: $\mathsf{TAct}$ for {\em transitive actions}, $\mathsf{Act}$ for general actions, and $\mathsf{Fm}$ for formulas. As it was the case of the setting of \cite{Multitype}, in the present setting actions are assigned {\em two} distinct types: indeed, $\mathsf{Act}$ is the basic type of actions, and encodes the generic action-parameters of the dynamic modal connectives in the original PDL-language, whereas the type $\mathsf{TAct}$ of {\em transitive actions} is home to the $\alpha^+$  action-terms in the original language. The interaction between the two types of actions makes it possible to capture transitive closure, at the operational level, purely in the context of actions.

However, the present system does not enjoy  feature (2) above. Indeed, the present setting is not yet expressive enough to capture transitive closure at the {\em structural} level. Hence, the induction axiom is captured by means of infinitary rules, and making use of the mediating role of formulas. We conjecture that being able to express transitive closure at the structural level is key to dispensing with the infinitary rules, which is our next goal for future developments in this line research.

The organization of the paper goes as follows:  in Section \ref{PDL:sec:Preliminaries}, we collect the relevant preliminaries on PDL, and we recall the  generalization of the notion of proper display calculi to the multi-type setting, and its corresponding extension of Belnap's cut elimination metatheorem. 
In Section \ref{PDL:sec:language and rules}, we introduce the multi-type display calculus for PDL, which we refer to as Dynamic Calculus for PDL. 
In Section \ref{PDL:sec:Soundness}, we discuss the soundness of some of the rules w.r.t.\ the standard semantics. 
In Section \ref{PDL:sec:Completeness},
we prove that this calculus is complete w.r.t.\ PDL.
In Section \ref{PDL:sec:Cut-elimination},
we prove that it enjoys the Belnap-style cut elimination. 
In Section \ref{PDL:sec:Conservativity}, we discuss the different techniques available to prove that the calculus is conservative.
In Section \ref{PDL:sec:Conclusions}, we collect some conclusions and indicate further directions. Most of the proofs and derivations are collected in  Appendices \ref{Appendix : Cut elimination for PDL, principal stage} and \ref{Appendix : completeness for PDL}.

\section{Preliminaries}
\label{PDL:sec:Preliminaries}
   In the present section, we collect some basic facts: in subsection \ref{PDL:ssec:PDL}, we briefly review the Hilbert-style axiomatization of Propositional Dynamic Logic (PDL);  in subsection \ref{PDL:ssec:quasi-def} we introduce the definition of proper  multi-type display calculi, and state their cut elimination metatheorem. This definition and theorem are more compact versions of the ones of \cite{Multitype}, hence they will not be expanded on, but the Belnap-style cut elimination for the Dynamic Calculus for PDL (cf.\ Section \ref{PDL:sec:Cut-elimination}) will be shown on the basis of this more compact definition.

\subsection{Propositional Dynamic Logic}
\label{PDL:ssec:PDL}
In our review of PDL, we will loosely follow \cite{Kozen2000}. However, our presentation differs from that in \cite{Kozen2000} in some respects, which will be discussed below.

Let \textsf{AtProp} and \textsf{AtAct} be  countable and disjoint sets of atomic propositions and atomic actions, respectively. The set $\mathcal{L}$ of the formulas $A$ of Propositional Dynamic Logic (PDL), and the set $\mathsf{Act}(\mathcal{L})$ of the {\em actions} $\alpha$ {\em over} $\mathcal{L}$ are defined simultaneously as follows:
\begin{center}
$A ::= p\in \mathsf{AtProp} \mid \neg A \mid A\vee A \mid \langle\alpha\rangle A\;\; (\alpha\in \mathsf{Act}(\mathcal{L})),$
\end{center}
\begin{center}
$\alpha ::= a\in \mathsf{AtAct} \mid \alpha \gsc \alpha \mid \alpha\cup\alpha  \mid A \wn  \mid \alpha^+\;\; (A\in \mathcal{L})$.
\end{center}
Let the symbols $\wedge$ and  $\bot$ be defined as usual, that is $A \wedge B := \neg (\neg A \vee \neg B)$ and $\bot := A \wedge \neg A$ for some formula $A \in \mathcal{L}$.\\
Models for this language are  tuples $M = (W, \{R_a\mid a\in \mathsf{AtAct}\}, V)$,  such that $R_a\subseteq W\times W$ for each $a\in \mathsf{AtAct}$ and $V(p) \subseteq W$ for each $p \in \mathsf{AtProp}$. For each non atomic $\alpha\in \mathsf{Act}(\mathcal{L})$, the relation $R_\alpha\subseteq W\times W$ is defined recursively as follows:
\begin{center}
\begin{tabular}{r c l l}
$R_{\alpha \gsc \beta}$      & := & $R_\alpha \circ R_\beta$  &  $= \{(u, v) \mid \exists w \in W ((u, w) \in R_\alpha \,\textrm{ and }\, (w, v) \in R_\beta)\}$ \\
$R_{\alpha \,\gor\, \beta}$      & := & $R_\alpha \gor R_\beta$  &  $= \{(u, v) \mid (u, v) \in R_\alpha \,\textrm{ or }\, (u, v) \in R_\beta\}$ \\
$R_{\alpha^+}$                  & := &  $(R_\alpha)^+$                & $= \bigcup_{n\geq 1}R_\alpha^{n}$ \\
$R_{A \wn }$                       & := & \mc{2}{l}{$\{(w, w)\mid  w\in V(A)\}$.}   \\

\end{tabular}
\end{center}
where $R_\alpha^n$ is defined as follows: $R_\alpha^1:= R$ and $R_\alpha^{n+1} := R_\alpha^n \circ R_\alpha$ for every $n\geq 1$.
{\commment{
\begin{center}
\begin{tabular}{r c l c r c l}
$R_{\alpha \,;\, \beta}$       & := & $R_\alpha \circ R_\beta$              & $\qquad$ & $R_{\alpha \,\cup\, \beta}$ & := & $R_\alpha \cup R_\beta$  \\
$R_{\alpha \,\cap\, \beta}$ & := & $R_\alpha \cap R_\beta$              & $\qquad$ & $R_{\wn A}$                       & := & $\{(w, w)\mid  w\in V(A)\}$ \\
$R_{\alpha^+}$                  & := & $\bigcup_{n\geq 1}R_\alpha^{n}.$ &                &                                            &     &                                            \\
\end{tabular}
\end{center}
}}
Given the stipulations above, the definitions of satisfiability and validity of propositions are the usual ones in
modal logic; in particular the evaluation of formulas of the form $\langle\alpha\rangle A$ (resp.\ $\ls\alpha\rs A$) makes use of the  corresponding relation $R_\alpha$:

\begin{center}
\begin{tabular}{l}
$M, w \Vdash \lc\alpha\rc A$\quad iff \quad $\exists v \in W (w R_\alpha v \,\textrm{ and }\, M, w \Vdash A)$                   \\
$M, w \Vdash \ls\alpha\rs A$\quad iff \quad $\forall v \in W (\textrm{if }\, w R_\alpha v \,\textrm{ then }\, M, w \Vdash A)$ \\
\end{tabular}
\end{center}
The following list of axioms and rules constitutes a sound and complete Hilbert-style deductive system for PDL with box as primitive modality and
diamond defined as usual $\lc\alpha\rc A := \neg \ls\alpha\rs \neg A$. We list the \emph{Fix-Point Axiom} (also called \emph{Mix Axiom}) and the \emph{Induction Axiom} (also called \emph{Segerberg Axiom}) in the version with $\ast$ (cf.\ \cite{Seg,FisLad,Kozen2000}) and with $+$ (cf.\ \cite{Har,Har13}). For proofs of the completeness of PDL, the reader is referred to \cite{KP81}.
For the sake of the developments of the following sections, we take the axiomatisation of PDL with positive iteration $+$, rather than with Kleene star $\ast$.
\begin{center}
\begin{tabular}{ll}
\mc{1}{l}{ }          & \mc{1}{l}{\textbf{Box-axioms}}                                                                      \\
K
& $\vdash [\alpha](A\pra B)\pra([\alpha] A\pra[\alpha]B)$                                           \\
Choice               &  $\vdash [\alpha\cup \beta] A\plra [\alpha] A \pand [\beta] A$                                  \\
Composition      & $\vdash [\alpha \gsc \beta]A \plra [\alpha] [\beta] A$                                               \\
Test                    & $\vdash [A \wn ]B \leftrightarrow (A\pra B)$                                                              \\
Distributivity       & $\vdash [\alpha](A\pand B) \plra [\alpha] A \pand [\alpha] B$                                   \\
\hline
Fix Point $\ast$  & $\vdash \ls\alpha^\ast\rs\, A \plra A \pand \ls\alpha\rs\,\ls\alpha^\ast\rs\, A$              \\
Induction $\ast$ & $\vdash A \pand \ls\alpha^\ast\rs\, (A \pra \ls\alpha\rs\, A) \pra \ls\alpha^\ast\rs\, A$ \\
\hline
\hline
Fix Point $+$      & $\vdash [\alpha^+] A \plra [\alpha]A\pand [\alpha][\alpha^+] A$                               \\
Induction $+$     &  $\vdash ([\alpha]A\pand [\alpha^+](A\pra [\alpha] A))\pra[\alpha^+] A$                    \\
\mc{2}{l}{ }                                                                                                                  \\
 \mc{1}{l}{}                              & \mc{1}{l}{\textbf{Inference Rules} }                                   \\
Modus Ponens                       & if $\vdash A \pra B$ and $\vdash A$, then $\vdash B$     \\
$\ls\alpha\rs$-Intro & if $\vdash A$, then $\vdash [\alpha] A$                             \\
\end{tabular}
\end{center}

As was done in \cite{Multitype}, motivated by the fact that the calculus of Section \ref{PDL:sec:language and rules} is modular and can be easily rearranged to take propositional bases which are strictly weaker than the Boolean one, both box and diamond operators will be taken as primitive (see also \cite{Gol} for an axiomatisation of PDL with independent modalities).

\begin{center}
\begin{tabular}{ll}
                          & \textbf{Diamond-axioms} \\
Choice               & $\vdash \lc\alpha \gor \beta\rc\, A \plra \lc\alpha\rc\, A \por \lc\beta\rc\, A$                                                  \\
Composition      & $\vdash \lc\alpha \gsc \beta\rc\, A \plra \lc\alpha\rc \, \lc\beta\rc\, A$                                                           \\
Test                    & $\vdash \lc A\wn\rc\, B \plra A \pand B$                                                                                                      \\
Distributivity       & $\vdash \lc\alpha\rc\, (A \por B) \plra \lc\alpha\rc\, A \por \lc\alpha\rc\, B$                                                   \\
\hline
Fix point $\ast$  & $\vdash \lc\alpha^\ast\rc\, A \plra A \por \lc\alpha\rc \lc\alpha^\ast\rc\, A$                                                     \\
Induction $\ast$ & $\vdash \lc\alpha^\ast\rc\, A \plra A \por \lc\alpha^\ast\rc\, (\neg A \pand \lc\alpha\rc\, A)$                           \\
\hline
\hline
Fix point $+$      & $\vdash \lc\alpha^+\rc\, A \plra \lc\alpha\rc A \por \lc\alpha\rc \lc\alpha^+\rc\, A$                                       \\
Induction $+$     & $\vdash \lc\alpha^+\rc\, A \pra \lc\alpha\rc A \por \lc\alpha^+\rc\, (\neg A \pand \lc\alpha\rc\, A)$                \\
\mc{2}{l}{ }                                                                                                                  \\
 \mc{1}{l}{}                     & \mc{1}{l}{\textbf{Inference Rules} }                                   \\
Modus Ponens              & if $\vdash A \pra B$ and $\vdash A$, then $\vdash B$     \\
$\lc\alpha\rc$-Intro & if $\vdash A \pra \bot$, then $\vdash \lc\alpha\rc A \pra \bot$                             \\

\end{tabular}
\end{center}
The language of PDL  will be also extended with the modalities $\RESalphaBox$ and $\RESalphaDia$ which are adjoint  to $\langle\alpha\rangle$ and $[\alpha]$ respectively for each action $\alpha$.
These modalities correspond to the \textit{converse operator} $ ( \cdot)^{-1}$ with the following semantics: for any action $\alpha$,
$$R_{\alpha^{-1}} :=  R_\alpha^{-1}, $$
where $R^{-1}:= \{ (u,v) \mid (v,u) \in R \}$ for every relation $R$.
As mentioned in \cite{Kozen2000}, the converse operator is useful to talk about `running a program backward' or reversing actions, although this is not always possible in practice.
The modal operators $\RESalphaBox$ and  $\RESalphaDia$ are semantically interpreted in the standard way using the relation $R_{\alpha}^{-1}$.
The relevant additional rules are reported below (see also \cite{Har, Har13} for an analogous extension).

\begin{center}
\begin{tabular}{ll}
\mc{1}{l}{ }  & \mc{1}{l}{\textbf{Inference Rules}}                                                           \\
$(\lc\alpha\rc\dashv \RESalphaBox )$-Adj. & $\fCenter \lc\alpha\rc A \pra B$ \,iff\, $\fCenter A \pra \RESalphaBox B$ \\
$(\RESalphaDia\dashv \ls\alpha\rs)$-Adj.     &
$\fCenter A \pra \ls\alpha\rs B$ \,iff\, $\fCenter \RESalphaDia A \pra B$                      \\
\end{tabular}
\end{center}



\subsection{Multi-type calculi}
\label{ssec:multi}

Here we report on the environment of multi-type display calculi introduced in \cite[Section 3]{Multitype}. 

Our starting point is a propositional language, the terms of which form $n$ pairwise disjoint types $\mathsf{T_1}\ldots \mathsf{T_n}$, each of which with its own signature.
We will use $a, b, c$ and $x, y, z$ to respectively denote operational and structural  terms of  unspecified (possibly different) type. Further, we assume that operational connectives and structural connectives are given {\em both} within each type {\em and} also between different types, so that the display property holds (at least for derivable sequents, cf.\ \cite[Definition 2]{Multitype}, see also below). 

%
%
In the applications we have in mind, the need will arise to support types that are semantically ordered by inclusion.
For example, in Section~\ref{PDL:sec:language and rules} we will introduce, beside the type $\mathsf{Fm}$ of formulas, two types $\mathsf{TAct}$ and $\mathsf{Act}$ of transitive and general actions, respectively. The need for enforcing the distinction between transitive and general actions   in the specific situation of  Section~\ref{PDL:sec:language and rules} arises by the presence of  rules such as {\em absorption} (see table `Iteration Structural Rules' in section \ref{PDL:sec:language and rules}),  which is sound for transitive actions but not for general actions. The semantic point of view suggests to treat $\mathsf{TAct}$ as a proper subset of $\mathsf{Act}$, but our syntactic stipulations, although will be sound w.r.t.\ this state of affairs, will be tuned for the more general situation in which the sets $\mathsf{TAct}$ and $\mathsf{Act}$  are disjoint.
This is convenient as each term can be assigned a unique type {\em unambiguously}. This is a crucial requirement for the Belnap-style cut elimination theorem below, and will be explicitly stated in condition C'$_2$ in the next subsection.

\begin{definition}
\label{PDL:def:type-uniformity}
A sequent $x\vdash y$ is {\em type-uniform} if $x$ and $y$ are of the same type.
\end{definition}
In a display calculus, the cut rule is typically of the following form:
\begin{center}
\AX$X \fCenter A$
\AX$A \fCenter Y$
\RightLabel{$Cut$}
\BI$X \fCenter Y$
\DisplayProof
\end{center}
where $X, Y$ are structures and $A$ is a formula. This translates straightforwardly to the multi-type environment, by the stipulation that  cut rules of the form
\begin{center}
\AX$x \fCenter a$
\AX$a \fCenter y$
\RightLabel{$Cut$}
\BI$x \fCenter y$
\DisplayProof
\end{center}
are allowed in the given multi-type system for each type.

\begin{definition}
\label{PDL:def:strong-type-uniformity}
A cut rule is {\em strongly type-uniform} if its premises and conclusion are of the same type.
\end{definition}

\paragraph{Relativized display property.} The full display property is a key ingredient in the proof of the cut-elimination metatheorem. 

\begin{definition} \label{PDL:def: display prop} (cf.\ \cite[Section 3.2]{Belnap}) 
A proof system enjoys the {\em full display property}  iff for every sequent $X \vdash Y$ and every substructure $Z$  of either  $X$ or  $Y$, the sequent  $X \vdash Y$ can be  transformed, using the rules of the system, into a logically equivalent sequent which is either of the form $Z \vdash W$ or of the form $W \vdash Z$, for some structure $W$. In the first case, $Z$ is \emph{displayed in precedent position}, and in the second case, $Z$ is \emph{displayed in succedent position}.
The rules enabling this equivalent rewriting  are called \emph{display postulates}.
\end{definition}

For instance, it enables a system enjoying it to meet Belnap's condition C$_8$ 
for the cut-elimination metatheorem.
However, it turns out that an analogously good behaviour can be guaranteed of any sequent calculus enjoying the following weaker property:

\begin{definition} \label{PDL:def: relativized display prop}  
A proof system enjoys the {\em relativized display property}  iff for every {\em derivable} sequent $X \vdash Y$ and every substructure $Z$  of either  $X$ or  $Y$, the sequent  $X \vdash Y$ can be  transformed, using the rules of the system, into a logically equivalent sequent which is either of the form $Z \vdash W$ or of the form $W \vdash Z$, for some structure $W$.
\end{definition}
The  calculus defined in Section \ref{PDL:sec:language and rules} does not enjoy the full display property, but does enjoy the relativized display property above, which enables it to verify the condition C'$_8$ (see Section \ref{PDL:ssec:quasi-def}).
The relativized display property is discussed in more details in \cite[Section 2.3]{Multitype}.

\subsection{Proper  multi-type display calculi, and their cut elimination meta\-theorem}
\label{PDL:ssec:quasi-def}

\begin{definition}
\label{PDL:def:quasi-displayable multi-type calculus}
A multi-type display calculus is  {\em proper} 
if it satisfies the following list of conditions:

\paragraph*{C$_1$: preservation of operational terms.} Each operational term occurring in a premise of an inference rule {\em inf}  is a subterm of some operational term in the conclusion of {\em inf}.
\paragraph*{C$_2$: Shape-alikeness of parameters.} Congruent parameters are occurrences of the same structure.\footnote{See \cite[Section 2.2, Condition C$_2$]{GAV}.}
\paragraph*{C'$_2$: Type-alikeness of parameters.}  Congruent parameters have exactly the same type. This condition bans the possibility that a parameter changes type along its history.
\paragraph*{C$_3$: Non-proliferation of parameters.} Each parameter in an inference rule {\em inf} is congruent to at most one constituent in the conclusion of {\em inf}.
\paragraph*{C$_4$: Position-alikeness of parameters.} Congruent parameters are either all antecedent or all succedent parts of their respective sequents.
\paragraph*{C$_5$: Display of principal constituents.} If an operational term $a$ is principal in the conclusion sequent $s$ of a derivation $\pi$, then $a$ is in display.
\paragraph*{C'$_6$: Closure under substitution for succedent parts within each type.} Each rule is closed under simultaneous substitution of arbitrary structures for congruent operational terms occurring in succedent position, {\em within each type}.
\paragraph*{C'$_7$: Closure under substitution for precedent parts within each type.} Each rule is closed under simultaneous substitution of arbitrary structures for congruent operational terms occurring in precedent position, {\em within each type}.

\paragraph*{C'$_8$: Eliminability of matching principal constituents.}

This condition  requests a standard Gentzen-style checking, which is now limited to the case in which  both cut formulas  are {\em principal}, i.e.~each of them has been introduced with the last rule application of each corresponding subdeduction. In this case, analogously to the proof  Gentzen-style, condition C'$_8$ requires being able to transform the given deduction into a deduction with the same conclusion in which either the cut is eliminated altogether, or is transformed in one or more applications of the cut rule, involving proper subterms of the original operational cut-term. In addition to this, specific to the multi-type setting is the requirement that the new application(s) of the cut rule be also {\em strongly type-uniform} (cf.\ condition C$_{10}$ below).

\paragraph*{C$_9$: Type-uniformity of derivable sequents.} Each derivable sequent is type-uniform. 

\paragraph*{C$_{10}$: Strong type-uniformity of cut rules.} All cut rules are strongly type-uniform (cf.\ Definition \ref{PDL:def:strong-type-uniformity}).
\end{definition}

\begin{theorem}
\label{PDL:thm:meta multi}
Any multi-type display calculus satisfying C$_2$, C'$_2$, C$_3$, C$_4$, C$_5$, C'$_6$, C'$_7$, C'$_8$, C$_9$ and C$_{10}$ is cut-admissible. If also C$_1$ is satisfied, then the calculus enjoys the subformula property.
\end{theorem}

The proof of the theorem above is similar to the proof of Theorem
3.3 in \cite{Multitype}.


\section{Language and rules}
\label{PDL:sec:language and rules}
   
\def\mSLA{\mbox{\,\rotatebox[origin=c]{-3.9999}{\TriangleLeft}\raisebox{0.43ex}{$\mkern-1.3mu\thicksim$}\,}}

\def\gSLA{\mbox{\,\gLA\!\!\!\!\!\raisebox{0.1ex}{$\mkern-1.3mu\thicksim$}\,}}

\def\gSRA{\mbox{\,\gRA\!\!\!\!\!\!\!\raisebox{0.1ex}{$\mkern-1.3mu\thicksim$}\,}}

\def\Sprec{\mbox{\,$\prec$
\!\!\!\!
\raisebox{0ex}{$\mkern-1.3mu\thicksim$}\,}}

\def\Ssucc{\mbox{\,
\raisebox{0ex}{$\mkern-1.3mu\thicksim$}
\!\!\!\!
$\succ$\,}}

As mentioned in the introduction, the key idea is to introduce a language in which actions are not  accounted for as {\em parameters} indexing  the dynamic connectives, but as {\em logical terms} in their own right.
In the present section, we define a {\em multi-type language} into which the language of PDL translates, and in which the different types interact via special unary or binary connectives. The present setting consists of the following types: $\mathsf{Act}$ for actions, $\mathsf{TAct}$ for transitive actions, and $\mathsf{Fm}$ for formulas.
We stipulate that  $\mathsf{Act}$,  $\mathsf{TAct}$ and $\mathsf{Fm}$ are pairwise disjoint.

\paragraph*{An algebraically motivated introduction.} Similarly to the binary connectives introduced in \cite{Multitype}, the following binary connectives (referred to as {\em heterogeneous connectives}) facilitate the interaction between the two types of actions and the formulas:
\begin{eqnarray}
{\mand}_0, {\mband}_0& :&  \mathsf{TAct} \times \mathsf{Fm} \to \mathsf{Fm} \\
{\mand}_1, {\mband}_1& :&  \mathsf{Act} \times \mathsf{Fm} \to \mathsf{Fm}.
\end{eqnarray}

\noindent We think of the connectives above as being semantically interpreted as maps preserving existing joins in each coordinate (see below), between algebras suitable to interpret general actions, transitive actions, and formulas respectively. For instance,  suitable   domains of interpretation for formulas can be  complete atomic Boolean algebras or  perfect Heyting algebras; suitable domains of interpretation for actions (in different versions of PDL) can be   quantal frames \cite[Chapter III.2]{resende}, or  Kleene algebras with tests (KATs) \cite[page 421]{Kozen2000},
appropriate subalgebras of which can serve as  domains of interpretation for    transitive actions  (possibly w.r.t.\ to certain restrictions of the signature). 

Connected to the standard relational semantic setting for PDL outlined in Section \ref{PDL:ssec:PDL}, 
for any relational model $M$ based on the set $W$, the complex  algebra based on $\mathcal{P} W$  is taken as the domain of interpretation for  $\mathsf{Fm}$-type terms, and
$\mathsf{Act}$-type (resp.\ $\mathsf{TAct}$-type) terms are interpreted as (transitive) relations on $W$. This way, in particular, for any model $M$, the domain of interpretation of $\mathsf{TAct}$ is the  relation algebra based on the complete lattice $\mathcal{T}(W\times W)$ of the transitive relations on $W$ (indeed,  $\mathcal{T}(W\times W)$ is a sub-$\bigcap$-semilattice of $\mathcal{P} (W\times W)$).

\noindent A natural requirement of the algebraic  environment outlined above, which is verified by the algebras arising from the standard semantic setting of Kripke models,   is that the interpretations of these heterogeneous connectives are {\em actions}, i.e., that (the domains of interpretation of) both $\mathsf{Act}$ and $\mathsf{TAct}$  induce module structures (cf.\ \cite[Chapter II.2]{resende}) on (the domain of interpretation of) $\mathsf{Fm}$. That is, the following conditions hold  for all $\alpha, \beta\in \mathsf{Act}$, $\gamma, \delta\in \mathsf{TAct}$, and $A\in \mathsf{Fm}$,
\begin{eqnarray}
\label{PDL:PDL:eq:action0}
\gamma {\mand}_0(\delta {\mand}_0 A) = (\gamma \gsc \delta){\mand}_0 A &\quad & \alpha {\mand}_1(\beta {\mand}_1 A) = (\alpha \gsc \beta){\mand}_1 A    \\
\label{PDL:PDL:eq:action1}
\gamma {\mband}_{\!0} (\delta {\mband}_{\!0} A) = (\delta \gsc \gamma){\mband}_{\!0} A &\quad & \alpha {\mband}_{\!1} (\beta {\mband}_{\!1} A) = (\beta \gsc \alpha){\mband}_{\!1} A.
\end{eqnarray}

\noindent In  the semantic contexts mentioned above, the fact that the interpretations of the connectives $\mand_i$ and $\mband_{\!i}$ for $i = 0, 1$ are completely join-preserving in both coordinates
implies  that each of them has  right adjoint in each coordinate. In particular,  the following additional connectives have a natural interpretation as the right adjoints of ${\mand}_i$ and $\mband_{\!i}$ for $i = 0, 1$  in their second coordinate:
 \begin{eqnarray}
{\mbra}_{\!0}, {\mra}_{\!0}   & :&  \mathsf{TAct} \times \mathsf{Fm} \to \mathsf{Fm} \\
{\mbra}_{\!1}, {\mra}_{\!1} & :&  \mathsf{Act} \times \mathsf{Fm} \to \mathsf{Fm}.
\end{eqnarray}
\noindent Also, the following connectives can be naturally interpreted in the setting above, as  right adjoints of ${\mand}_1$ and $\mband_{\!1}$  in their first coordinate: 
 \begin{eqnarray}
{\nbla}_{\!1}, {\nla}_{\!1} & : & \mathsf{Fm}\times \mathsf{Fm} \to \mathsf{Act}.
\end{eqnarray}
\noindent  Intuitively, for all formulas $A, B$, the term $B {\nbla}_{\! 1}   A$ denotes the weakest  action $\alpha$ such that, if $A$ was true before $\alpha$ was performed, then $B$ is true after any successful execution of $\alpha$.  The connectives above, when  restricted to the diagonal subset  of  $\mathsf{Fm}  \times \mathsf{Fm}$, are the ones Pratt described as  the {\em weakest preservers} in \cite{Pra91}.
\paragraph*{Virtual adjoints, part 1.} However,  ${\mand}_0$ and $\mband_{\!0}$ cannot be assumed to have right adjoints in their first coordinate (the reason for this will be discussed in part 2 below). Hence, the following connectives cannot be assigned a natural interpretation: 
\begin{eqnarray}
{\msbla}_{\!0}, {\msla}_{\!0} & :&  \mathsf{Fm}  \times \mathsf{Fm} \to \mathsf{TAct}.
\end{eqnarray}
\label{PDL:virtual adjoints} We adopt the following notational convention about the  three different shapes of arrows  introduced so far.
Arrows with  straight tails ($\mra$ and $\mbra$) stand for connectives which have a semantic counterpart and which are included in the language of the Dynamic Calculus for PDL (see the grammar of operational terms on page \pageref{PDL:page:def-formulas});
arrows with no tail  (e.g.\ $\nbla$ and $\nla$) do have a semantic interpretation but are {\em not} included in the language at the operational level, and
arrows with  squiggly tails ($\msbla$ and $\msla$) stand for syntactic objects, called \textit{virtual adjoints}, which do not have a semantic interpretation, but  will play an important role, namely guaranteeing the dynamic calculus to enjoy the relativized display property (cf.\ Definition \ref{PDL:def: relativized display prop}).

%

In what follows, virtual adjoints will be introduced {\em only} as structural connectives. That is,  they will not correspond to any operational connective, and they will not appear actively in any rule schema other than the display postulates (cf.\ Definition \ref{PDL:def: display prop}). 

\noindent The adjunction relations ${\mand}\dashv \mbra, \nbla$ and $\mband\dashv \mra, \nla$ stipulated above translate into the following clauses  for every  action $\alpha$, every transitive action $\delta$,  and all formulas $A$ and $B$:
\begin{eqnarray}
\label{PDL:adjunction:heterogeneous0}
\delta \mand_0 A\leq B\ \mbox{ iff }\ A\leq \delta\mbra_{\!0} B  &\quad&  \delta \mband_{\!0\,} A\leq B\ \mbox{ iff }\ A\leq \delta{\mra}_{\!0} B\\
\alpha \mand_1 A\leq B\ \mbox{ iff }\ A\leq \alpha\mbra_{\!1} B  &\quad&  \alpha \mband_{\!1\,} A\leq B\ \mbox{ iff }\ A\leq \alpha{\mra}_{\!1} B \label{PDL:adjunction:heterogeneous}\\
%
\alpha \mand_1 A\leq B\ \mbox{ iff }\ \alpha\leq B \nbla_{\!1} A  &\quad&  \alpha \mband_{\!1\,} A\leq B\ \mbox{ iff }\ \alpha\leq B{\nla}_{\!1} A.
\label{PDL:adjunction:heterogeneous1-weird}
\end{eqnarray}
\paragraph*{Translating PDL into the multitype language, part 1.}\label{PDL:translation PDL to DC} The intended link between the dynamic connectives of PDL and the multi-type language informally outlined above is illustrated in  Table \ref{PDL:table:translation dynamic modalities}. 
\begin{table}
\begin{center}
\begin{tabular}{c c c c c c c}
$\langle\alpha \rangle A$   & becomes & $\alpha\mand_1 A$ && $\RESalphaDia A$  & becomes & $\alpha\mband_{\!1} A$ \\
 $[\alpha] A$  & becomes & $\alpha\mra_{\!1} A$ && $\RESalphaBox A$  & becomes &  $\alpha\mbra_{\!1} A$ \\
 $\langle\alpha^+ \rangle A$ & becomes & $\alpha^+\mand_0 A$ && $\RESalphaplusDia A$  & becomes &$\alpha^+\mband_{\!0} A$\\
 $[\alpha^+] A$  & becomes &$\alpha^+\mra_{\!0} A$ && $\RESalphaplusBox A$   & becomes &  $\alpha^+\mbra_{\!0} A$. \\
\end{tabular}
\caption{Translating box- and diamond-formulas of PDL into multi-type terms.}
\label{PDL:table:translation dynamic modalities}
\end{center}
\end{table}
\noindent
This table will be extended to account for the disambiguation of the action-only connectives (see further on). This yields  the definition of a formal translation between the language of PDL (possibly extended with adjoints) and that of the Dynamic Calculus. 
We omit the details of this straightforward inductive definition.
In Section \ref{PDL:sec:Soundness}, 
this translation will be elaborated on, and the interpretation of the language of the Dynamic Calculus will be defined so that the translation above preserves the validity of sequents.
In the light of this translation, the adjunction conditions in clauses (\ref{PDL:adjunction:heterogeneous0}) and (\ref{PDL:adjunction:heterogeneous}) correspond to the following  adjunction conditions:
\[
\langle\alpha^+\rangle \dashv \RESalphaplusBox \quad \quad \RESalphaplusDia\dashv[\alpha^+] \quad \quad  \langle\alpha\rangle \dashv \RESalphaBox \quad \quad \RESalphaDia\dashv[\alpha].
\]

\paragraph*{Transitive closure as left adjoint.} 
\label{PDL:Language:para:transitiveClosure}
The other key idea of the design of this calculus is to shape the proof-theoretic  behaviour of the  iteration connective in PDL on the order-theoretic behaviour of the transitive closure. Namely, it is well known (cf.\ \cite[7.28]{DaveyPriestley2002})
that the map associating each binary relation on a given set $W$ with its transitive closure  can be characterized order-theoretically as the left adjoint of the inclusion map 
$\iota: \mathcal{T}(W\times W)\hookrightarrow \mathcal{P}(W\times W)$. Indeed, for every $R\in \mathcal{P}(W\times W)$ and every $T\in \mathcal{T}(W\times W)$, 
\[R^+\subseteq T \ \mbox{ iff }\ R\subseteq \iota(T). \]

This motivates the introduction of {\em two} different types of actions: they are needed in order to  properly express this adjunction.  Thus, we consider the following pair of adjoint maps:
\begin{eqnarray}
(\cdot)^+& :&  \mathsf{Act} \to \mathsf{TAct}\\
(\cdot)^-& :& \mathsf{TAct} \to \mathsf{Act}.
\end{eqnarray}
\noindent The $(\cdot)^+\dashv (\cdot)^-$  adjunction relation stipulated above translates into the following clause  for every  action $\alpha$, and every transitive action $\delta$:
\begin{eqnarray}
\alpha^+\leq \delta &\mbox{ iff }&  \alpha \leq \delta^-.
\label{PDL:adjunction+-}
\end{eqnarray}
\paragraph*{Type-disambiguation of action-parameters.}  We aim at designing a multi-type calculus which verifies condition C'$_2$ about type-alikeness of parameters. The stipulation that $\mathsf{TAct}$ and $\mathsf{Act}$ are disjoint is motivated by this goal, but this alone  is not enough. We also need to introduce several copies of
 sequential composition and non deterministic choice, as follows:
\begin{eqnarray}
\cup_1,\; ;_1  & :& \mathsf{Act}\times \mathsf{Act} \to \mathsf{Act}     \\
\cup_2,\; ;_2  & :& \mathsf{TAct}\times \mathsf{Act} \to \mathsf{Act}   \\
\cup_3,\; ;_3  & :& \mathsf{Act}\times \mathsf{TAct} \to \mathsf{Act}    \\
\cup_4,\;  ;_4 & :& \mathsf{TAct}\times \mathsf{TAct} \to \mathsf{Act} .
\end{eqnarray}
%
\paragraph*{Adjoints for action-connectives.} When actions are interpreted e.g.\ in $\mathcal{P}(W\times W)$ for some set $W$, the natural interpretation of $;_1$   (resp.\ of $\cup_1$) is completely join-preserving (resp.\ meet-preserving) in each coordinate. This implies that their right (resp.\ left) adjoints exist in each coordinate, hence the following connectives have a natural interpretation:
\begin{eqnarray}
  \gdra_1, \gdla_1 & :& \mathsf{Act}\times \mathsf{Act} \to \mathsf{Act}    \\
\gscra_1, \gscla_1 & :& \mathsf{Act}\times \mathsf{Act} \to \mathsf{Act}.
\end{eqnarray}
\noindent Here below, the conditions relative to these adjunctions: for all  $\alpha$, $\beta$, $\gamma\in\mathsf{Act}$,
\begin{eqnarray}
\alpha \leq \beta\cup_1 \gamma\ \mbox{ iff } & \ \beta \gdra_1\alpha \leq \gamma  \ & \mbox{ iff }\ \alpha \gdla_1 \gamma\leq \beta \label{PDL:adjunction:cup1} \\
\alpha \gsc_{\!1\,} \beta\leq \gamma\ \mbox{ iff } & \ \beta \leq \alpha\gscra_1 \gamma  \ & \mbox{ iff }\ \alpha \leq \gamma \gscla_1 \beta. \label{PDL:adjunction:sc1}
\end{eqnarray}
\noindent  Also, residuated operations exist for the $j$-indexed variants, $j\in \{2, 3\}$, of $;$ in their $\mathsf{Act}$-coordinate, and for all $j$-indexed variants of $\cup$ in both coordinates. These operations provide a natural interpretation for the following connectives:

\begin{eqnarray}
  \gdra_2,  \gscra_2 & :& \mathsf{TAct}\times \mathsf{Act} \to \mathsf{Act}    \\
    \gdla_3,  \gscla_3 & :& \mathsf{Act}\times \mathsf{TAct} \to \mathsf{Act}   \\
     \gdla_2,\gdra_3 & :& \mathsf{Act}\times \mathsf{Act} \to \mathsf{TAct}\\
\gdra_4,  & :& \mathsf{TAct}\times \mathsf{Act} \to \mathsf{TAct}\\
\gdla_4 & :& \mathsf{Act}\times \mathsf{TAct} \to \mathsf{TAct}.
\end{eqnarray}

\noindent The adjunction clauses relative to the connectives above  are  analogous to those displayed in (\ref{PDL:adjunction:cup1})--(\ref{PDL:adjunction:sc1}) relative to the connectives of their same shape.

 \paragraph*{Virtual adjoints, part 2.} Since the $\{ 2,3,4\}$-indexed  (resp.\ $0$-indexed) variants of  $;$  (resp.\ of $\mand$ and $\mband$) are to be regarded as  restrictions of their $1$-indexed counterpart, they cannot be assumed to be completely join-preserving  in their $\mathsf{TAct}$-coordinates.  This point is somewhat delicate, so it is worth being expanded on. In the standard semantic setting, the domains of interpretation of $\mathsf{Act}$- and $\mathsf{Tact}$-terms are the algebras $\mathcal{P}(W\times W)$ and $\mathcal{T}(W\times W)$, the domains of which are respectively given by all the binary relations and all the transitive relations on  a given set $W$. As mentioned early on, $\mathcal{T}(W\times W)$ is a sub $\bigcap$-semilattice of $\mathcal{P}(W\times W)$, and hence it is itself a complete lattice. However, for every $\mathcal{X}\subseteq \mathcal{T}(W\times W)$, we have that $\bigvee \mathcal{X}$ in $\mathcal{T}(W\times W)$ coincides with the transitive closure $(\bigcup \mathcal{X})^+$ of $\bigcup \mathcal{X}$. This means in particular that, while  meets in $\mathcal{T}(W\times W)$ coincide with meets in $\mathcal{P}(W\times W)$,  joins in  $\mathcal{T}(W\times W)$ are in general different from  joins in $\mathcal{P}(W\times W)$, or equivalently, $\mathcal{T}(W\times W)$ is not a sub $\bigcup$-semilattice of $\mathcal{P}(W\times W)$. This implies that if the $j$-indexed (resp.\ $0$-indexed) variants of  $;$  (resp.\ of $\mand$ and $\mband$) are to be regarded as  restrictions of their $1$-indexed counterpart, they will preserve the joins of $\mathcal{P}(W\times W)$ but not necessarily those of $\mathcal{T}(W\times W)$. This explains why the $j$-indexed variants of $;\,$ for $j\neq 1$ (resp.\ $\mand_0$ and $\mband_0$) cannot be assumed to be completely join-preserving  in their $\mathsf{TAct}$-coordinates. This implies that they do not have right adjoins in their $\mathsf{TAct}$-coordinates.\footnote{Precisely because meets in $\mathcal{T}(W\times W)$ coincide with meets in $\mathcal{P}(W\times W)$, (the interpretations of) all the $j$-indexed variants of $\cup$ for $2\leq j\leq 4$ are completely meet-preserving in each coordinate, and hence they do have adjoints in each coordinate. A case {\em sui generis} is the one of the connective $\dwn_0$, which denotes the right adjoint of the test operator $\wn_0$ regarded as a map into transitive actions. Notice that, whenever $\mathcal{X}$ is a collection of subsets of the diagonal relation $1_W = \{(z, z)\mid z\in W\}$,   the join of $\mathcal{X}$ in $\mathcal{T}(W\times W)$ does coincide with $\bigcup\mathcal{X}$. Hence, (the interpretation of)  $\wn_0$ is completely join preserving, which implies that  $\dwn_0$ is semantically justified.}
 Hence, the following connectives, which are also referred to as {\em virtual adjoints}, are not semantically justified:
\begin{eqnarray}
\gscla_2,\; \gscra_3  
&:& \mathsf{Act}\times \mathsf{Act} \to \mathsf{TAct}\\
\gscra_4,  & :& \mathsf{TAct}\times \mathsf{Act} \to \mathsf{TAct}\\
\gscla_4 & :& \mathsf{Act}\times \mathsf{TAct} \to \mathsf{TAct}.\\
\nla_0 , \ \nbla_0 & :& \mathsf{Fm}\times \mathsf{Fm} \to \mathsf{TAct}.
\end{eqnarray}
Again, as  discussed in \cite[Section 4]{Multitype}, virtual adjoints are important to guarantee the dynamic calculus for PDL to enjoy the relativized display property, which in turn guarantees the calculus to verify the  condition C'$_8$, crucial to the Belnap-style cut elimination metatheorem. However, to ensure that the virtual adjoints do not add unwanted proof power to the calculus, they will be added to the language only at the structural level, and they will not explicitly interact with any other connective in any rule but in the display rules relative to them.

\paragraph*{Translating PDL into the multitype language, part 2.} To have a complete account of how PDL formulas are to be translated into formulas of the multi-type language, Table \ref{PDL:table: disambiguation} integrates Table \ref{PDL:table:translation dynamic modalities}. 
\begin{table}
\begin{center}
\begin{tabular}{c c c || c c c }
$\alpha \,;\beta $   & $\rightsquigarrow$ & $\alpha \,;_1\beta$ &$\alpha \cup \beta $   & $\rightsquigarrow$ & $\alpha \cup_1\beta$ \\
 $\alpha^+ \,;\beta $   & $\rightsquigarrow$ & $\alpha^+ \,;_2\beta$ & $\alpha^+ \cup\beta $   & $\rightsquigarrow$ & $\alpha^+ \cup_2\beta$  \\
  $\alpha \,;\beta^+ $   & $\rightsquigarrow$ & $\alpha \,;_3\beta^+$ & $\alpha \cup\beta^+ $   & $\rightsquigarrow$ & $\alpha \cup_3\beta^+$  \\
 $\alpha^+ \,;\beta^+ $   & $\rightsquigarrow$ & $\alpha^+ \,;_4\beta^+$ & $\alpha^+ \cup\beta^+ $   & $\rightsquigarrow$ & $\alpha^+ \cup_4\beta^+$  \\
 \end{tabular}
\end{center}
\caption{Type-disambiguation of action-parameters in PDL.}
 \label{PDL:table: disambiguation}
\end{table}

\noindent The different copies of connectives introduced above are needed for the calculus to satisfy condition C'$_2$ about the  type-alikeness of parameters. However, in concrete derivations, as soon as  the type of the atomic constituents is clearly identifiable, the subscripts can be dropped. The disambiguation will also involve the action-type constants, which will be introduced only as structural connectives, but not as operational ones. Specifically,  the structural constants $\Phi_0$ and $\Phi_1$ (when occurring in antecedent position) both correspond to the action $\mathsf{skip}$, regarded as a transitive action or as a general action, respectively. Likewise, the structural constants $\gI_0$ and $\gI_1$, when occurring in antecedent (resp.\ succedent position) both correspond to the action $\mathsf{top}$ (resp.\ $\mathsf{crash}$)   regarded as a transitive action, and  as a general action, respectively.

\paragraph*{Axiomatizing PDL in the multitype language.} Given the translation based on Tables \ref{PDL:table:translation dynamic modalities} and \ref{PDL:table: disambiguation}, the original axioms of PDL can be translated as indicated below.  In what follows, the variables $a, b$ denote terms of type $\mathsf{Act}$ or $\mathsf{TAct}$, the variables $A, B$ denote terms of type $\mathsf{Fm}$ and $\alpha$ is a term of type $\mathsf{Act}$. For every $1\leq j\leq 4$ and  $i = 0, 1$, 

\begin{center}
\begin{tabular}{lrcl}
\mc{4}{l}{\textbf{Box-axioms}} \\
K                       & $\alpha \mra (A \pra B)$ & $\vdash$ & $(\alpha \mra A) \pra (\alpha \mra B)$                                                            \\
Choice              & $(a \gor_j b) \mra A$   & $\dashv\vdash$ & $(a \mra A) \pand (b \mra A)$                                      \\
Composition      & $(a \gsc_j b) \mra A$  & $\dashv\vdash$ & $a \mra (b \mra A)$                                                      \\
Test                   & $A\wn_i \mra B$                      & $\dashv\vdash$ & $A \pra B$                                                                                   \\
Distributivity       & $a \mra (A \pand B)$    & $\dashv\vdash$ & $(a \mra A) \pand (a \mra B)$                                       \\
\hline
Fix point $+$      & $\alpha^+ \mra A$              & $\dashv\vdash$  & $(\alpha \mra A) \pand (\alpha \mra (\alpha^+ \mra A))$                \\
Induction $+$     & $\alpha^+ \mra A$              & $\dashv$             &  $(\alpha \mra A) \pand (\alpha^+ \mra (A \pra (\alpha \mra A)))$  \\

 ~ \\

\mc{4}{l}{\textbf{Diamond-axioms}} \\
Choice               & $(a \gor_j b) \mand A$   & $\dashv\vdash$ & $(a \mand A) \por (b \mand A)$                                                   \\
Composition      & $(a \gsc_j b) \mand A$   & $\dashv\vdash$ & $a \mand (b \mand A)$                                                                \\
Test                    & $A\wn_i \mand B$                       & $\dashv\vdash$ & $A \pand B$                                                                                                \\
Distributivity       & $a \mand (A \por B)$        & $\dashv\vdash$ & $(a \mand A) \por (a \mand B)$                                                    \\
\hline
Fix point $+$      & $\alpha^+ \mand A$                 & $\dashv\vdash$ & $(\alpha \mand A) \por \alpha \mand \alpha^+ \mand A)$                            \\
Induction $+$     & $\alpha^+ \mand A$                 & $\vdash$            & $(\alpha \mand A) \por (\alpha^+ \mand (\neg A \pand (\alpha \mand A)))$ \\
\end{tabular}
\end{center}

\noindent Note that the subscripts of the arrow- and triangle-shaped connectives are completely determined by the type of their arguments in the first coordinate, and hence they have been omitted.

\paragraph*{Additional conditions.} As done and discussed in the setting of \cite{Multitype}, in order to express in the multi-type language that e.g.\ $\langle\alpha\rangle$ and $[\alpha]$ are ``interpreted over the same relation'', Sahlqvist correspondence theory (cf.\ e.g.\ \cite{ALBA, ConPalSou, CGP} for a state-of-the art-treatment) provides us with two alternatives: one of them is that we impose the following Fischer Servi-type conditions \cite{FS84}  to hold for all $a$ of type  $\mathsf{Act}$ or $\mathsf{TAct}$ and $A, B\in \mathsf{Fm}$: for $i = 0, 1$,
\begin{eqnarray*}
( a \mand_i A) \rightarrow (a\mra_{\! i}\ B)  \leq \delta\mra_{\! i}\ (A\rightarrow B)& &( a \mband_{\! i}\ A)\rightarrow (a \mbra_{\! i}\ B)  \leq     a\mbra_{\! i}\ (A\rightarrow B)          \\
 a\mand_i (A \pdra B) \leq ( a \mra_{\! i}\ A) \pdra (a\mand_i B)
& &
a\mband_{\! i}\ (A \pdra B)   \leq ( a \mbra_{\! i}\ A) \pdra (a \mband_{\! i}\ B).
\end{eqnarray*}
To see that the conditions above correspond to the usual Fischer Servi axioms in standard modal languages, one can observe that
 the conditions in the first line above are images, under the translation discussed above, of the Fischer Servi axioms reported on e.g.\ in \cite[Section 6.1]{GAV}.
The second alternative is to impose that, for every $0\leq i\leq 2$, the connectives $\mand_i$ and $\mband_i$ yield  {\em conjugated} diamonds (cf.\ discussion in \cite[Section 6.2]{GAV}); that is, the following inequalities hold for all
 $a$ of type $\mathsf{Act}$ or $\mathsf{TAct}$ and $A, B\in \mathsf{Fm}$:
\begin{eqnarray*}
(a\mand_i A)\wedge B\leq a\mand_i (A\wedge (a \mband_{\! i}\ B)) & & (a\mband_{\! i}\ A)\wedge B\leq a\mband_{\! i}\ (A\wedge (a \mand_i B))\\
a \mra_{\! i}\ (A \vee (a \mbra_{\! i}\ B))
\leq (a\mra_{\! i}\ A) \vee B
 & & a\mbra_{\! i}\ (A \vee ( a \mra_{\! i}\ B))\leq (a\mbra_{\! i}\ A) \vee B.
\end{eqnarray*}
\paragraph*{The operational language, formally.} 
Let us  introduce the operational terms of the multi-type language by the following simultaneous induction, based on sets $\mathsf{AtProp}$ of atomic propositions, and $\mathsf{AtAct}$ of atomic actions: \label{PDL:page:def-formulas}
\begin{align*}
\mathsf{Fm} \ni A:: = \; & p\in \mathsf{AtProp} \mid \bot\mid \top \mid A\wedge A\mid A\vee A\mid A \rightarrow A\mid A \pdra A \mid\\
&  \delta\mand_0 A\mid \delta\mra_{\!0} A  \mid
\alpha\mand_1 A\mid \alpha\mra_{\!1} A  \mid \\
&  \delta\mband_{\!0} A\mid \delta\mbra_{\!0} A \mid
\alpha\mband_{\!1} A\mid \alpha\mbra_{\!1} A
\\
~\\
\mathsf{Act} \ni \alpha:: =  \; & \pi \in \mathsf{AtAct} \mid
\delta^- \mid A \wn_1  \mid \\
& \alpha \gsc_{\!1} \alpha \mid \delta \gsc_{\!2} \alpha \mid \alpha \gsc_{\!3} \delta \mid \delta \gsc_{\!4} \delta \mid\\
&\alpha \gor_1 \alpha \mid \delta \gor_2 \alpha \mid \alpha \gor_3 \delta \mid \delta \gor_4 \delta 
\\
~\\
\mathsf{TAct}\ni \delta:: = \; &  \alpha^+ \mid A \wn_0 
\end{align*}

\paragraph*{Structural language, formally.} Display calculi manipulate two closely related languages: the operational and the structural. Let us introduce the structural language of the Dynamic Calculus, which as usual matches the operational language.
We have {\em formula-type} structures, {\em transitive action-type structures},  {\em action-type structures}, defined by simultaneous recursion as follows: 
\begin{align*}
\mathsf{FM} \ni X:: = \; & A \mid \textrm{I} \mid X \,, X\mid X > X \mid X < X \mid \Pi \DWN_1  \mid \Delta \DWN_0  \mid\\
& \Delta \mAND_{\!0} X\mid \Delta \mRA_{\!0} X \mid \Pi\mAND_{\!1} X\mid \Pi\mRA_{\!1} X  \mid\\
& \Delta \mBAND_{\!0\,} X\mid \Delta \mBRA_{\!0\,} X  \mid \Pi\mBAND_{\!1} X\mid \Pi\mBRA_{\!1} X
\\
~\\
\mathsf{ACT} \ni \Pi:: =\; &
\alpha \mid \,\gI_1\, \mid \Phi_1 \mid \Delta^{\ominus} \mid X \WN_1  \mid \\
& \Pi \gSC_{\!1} \Pi \mid \Pi \succ_1 \Pi \mid \Pi \prec_1 \Pi \mid \Delta \gSC_{\!2} \Pi \mid \Delta \succ_2 \Pi  \mid
 \Pi \gSC_{\!3} \Delta \mid \Pi \prec_3 \Delta \mid \Delta\gSC_{\!4} \Delta \mid\\
& \Pi \gC_{1} \Pi \mid \Pi \gRA_{\!1} \Pi \mid \Pi \gLA_{\!1} \Pi \mid
  \Delta \gC_{2} \Pi \mid \Delta \gRA_{\!2} \Pi \mid \Pi \gC_{3} \Delta \mid \Pi \gLA_{\!3} \Delta \mid \Delta \gC_{4} \Delta  \mid \\
&   X \mLA_{\!1}  X \mid X \mBLA_{\!1} X
\\
~\\
\mathsf{TACT} \ni \Delta:: = \;
& \gI_0\, \mid \Phi_0 \mid \Pi^{\oplus} \mid X \WN_0  \mid  \\
&  \Pi \Sprec_2 \Pi  \mid \Pi \Ssucc_3 \Pi  \mid \Delta \Ssucc_4 \Pi \mid \Pi \Sprec_4 \Delta  \mid \\
&  \Pi \gLA_{\!2} \Pi
\mid \Pi \gRA_{\!3} \Pi
\mid \Delta \gRA_{\!4} \Pi \mid \Pi \gLA_{\!4} \Delta\\
& X \mSLA_{\!0}  X \mid X \mSBLA_{\!0} X
\end{align*}

%
%

\paragraph*{The propositional base.}  As is typical of display calculi, each operational connective corresponds to one structural connective. In particular, the propositional base connectives behave exactly as in \cite{GAV,Multitype}, and their corresponding rules are reported in Appendix \ref{Appendix : The calculus for the propositional base of PDL}.\footnote{The operational connectives $\top, \bot, \pand, \por, \pra$ belong to the language of the most common axiomatizations of propositional classical logic. The operational connectives in brackets $\pdla, \pla, \pdra$ are mentioned in the table for the sake of exhaustiveness. In particular, $\pla$ and $\pra$ (resp.\, $\pdla$ and $\pdra$) are interderivable in the presence of the rule {\em exchange}, and the same is true of the dual connectives $\pdla$ and $\pdra$. The latter two connectives are known as \emph{subtraction} or \emph{disimplication}. The formula $A \pdra B$ (resp.\ $A \pdla B$) is classically equivalent to $\neg A \pand B$ (resp.\ $A \pand \neg B$).}

\begin{center}
\begin{tabular}{|r|c|c|c|c|c|c|c|c|c}
 \hline
 \footnotesize{Structural symbols} & \mc{2}{c|}{I} & \mc{2}{c|}{$,$} & \mc{2}{c|}{$<$}   & \mc{2}{c|}{$>$}     \\
 \hline
 \footnotesize{Operational symbols} & $\top$ &  $\bot$ & $\pand$ & $\por$ & $(\pdla)$ & $(\pla)$ & $(\pdra)$ & $\pra$      \\
\hline
\end{tabular}
\end{center}

\paragraph*{Action connectives, part 1.} As to the $0$-ary and binary action-type connectives
the table below provides the connection between structural and operational connectives for $1 \leq j \leq 4$, 
$h = 1,2$, and
$k = 1,3$. The indexes of the structural connectives are omitted.
\footnote{The operational connectives in brackets are given for the sake of completeness, but they do not belong to the language of the most common axiomatizations of PDL considered here. See \cite{Har, Har13, Pra91} for some extensions of the language and their interpretations.}
\begin{center}
\begin{tabular}{|r|c|c|c|c|c|c|c|c|}
 \hline
 \footnotesize{Structural symbols}    & \mc{2}{c|}{$\gI$}          &  \mc{2}{c|}{$\gC$}                                       & \mc{2}{c|}{$\gRA$}         & \mc{2}{c|}{$\gLA$}         \\
 \hline
 \footnotesize{Operational symbols} & $(\gtop)$ & $(\gbot)$  & \phantom{$\gand$} & $\gor$ & $(\gdra_j)$ & \phantom{$(\gra_h)$}       & $(\gdla_j)$ & \phantom{$(\gla_k)$}        \\
\hline
\mc{9}{c}{}\\
\hline
 \footnotesize{Structural symbols}    &   \mc{2}{c|}{$\Phi$}     &  \mc{2}{c|}{$\gSC$}                                      & \mc{2}{c|}{$\gSCRA$}     & \mc{2}{c|}{$\gSCLA$}    \\
 \hline
 \footnotesize{Operational symbols} & $(1)$ &               & $\gsc$ &                                                      & 
 & $(\gscra_h)$ & 
 & $(\gscla_k)$ \\
 \hline
\end{tabular}

\end{center}

\paragraph*{Heterogeneous connectives.} Similarly to \cite{Multitype}, the heterogeneous structural connectives correspond one-to-one with the operational ones, as illustrated in the following table: for $i =0, 1$,

\begin{center}
\begin{tabular}{| c|c|c | c | c|c|c|}
\hline
\footnotesize{Structural symbols} 
&\mc{2}{c|}{$\mAND_{\!i}$}
&\mc{2}{c|}{ $\mBRA_{\!i} $}
&\mc{2}{c|}{ $\mBLA_{\!1} $}\\
\hline
\footnotesize{Operational symbols} &$\mand_i$&\phantom{$\mand_i$}
&\phantom{$\mra_{\!i}$}& $\mbra_{\!i} $ 
& \phantom{$(\nbla_{\!1})$} & $(\nbla_{\!1})$ \\
\hline
\mc{7}{c}{}\\
\hline
\footnotesize{Structural symbols} 
&\mc{2}{c|}{ $\mBAND_{\!i}$}
&\mc{2}{c|}{$\mRA_{\!i}$}
&\mc{2}{c|}{$\mLA_{\!1}$}\\
\hline
\footnotesize{Operational symbols} 
&$\mband_{\!i}$&\phantom{$\mand_i$}
&\phantom{$\mra_{\!i}$}&$\mra_{\!i}$
& \phantom{$(\nla_{\!1})$} & $(\nla_{\!1})$ \\
\hline
\end{tabular}
\end{center}
That is, the structural connectives are to be interpreted in a context-sensitive way, but the present language  lacks the operational connectives which would correspond to them on one or both of the two sides. This is of course because in the present setting we do not need them.
However, in a setting in which they would turn out to be needed, it would not be difficult to introduce the missing operational connectives.\footnote{See \cite{Har, Har13, Pra91} for some extensions of the language and their interpretations.}
The operational rules for the heterogeneous connectives are essentially the same as the analogous rules given in \cite[Section 4]{Multitype}: in what follows, let $x, y$ and $a$ respectively stand for structural and operational terms of a type which can be either $\mathsf{TAct}$ or  $\mathsf{Act}$,  and  let $Y, Z$ and $B$ respectively stand for structural and operational terms of type $\mathsf{Fm}$; then, for $i = 0,1,$
\begin{center}
\begin{tabular}{rl}
\mc{2}{c}{\textbf{Actions-Propositions Operational Rules}} \\
 & \\
\AX$a \mAND_{\!i} B \fCenter Z$
\LeftLabel{\fns${\mand_i}_L$}
\UI$a \mand_i B \fCenter Z$
\DisplayProof
&
\AX$x \fCenter a$
\AX$Y \fCenter B$
\RightLabel{\fns${\mand_i}_R$}
\BI$x \mAND_{\!i} Y \fCenter a \mand_i B$
\DisplayProof
\\

 & \\

\AX$a \mBAND_{\!i} B \fCenter Z$
\LeftLabel{\fns${\mband_i}_L$}
\UI$a \mband_i B \fCenter Z$
\DisplayProof
&
\AX$x \fCenter a$
\AX$Y \fCenter B$
\RightLabel{\fns${\mband_i}_R$}
\BI$x \mBAND_{\!i} Y \fCenter a \mband_i B$
\DisplayProof
\\

 & \\

\AX$x \fCenter a$
\AX$B \fCenter Y$
\LeftLabel{\fns${\mra_{\!i}}_L$}
\BI$a {\mra_{\!\!i}} B \fCenter x {\mRA_{\!i}} Y$
\DisplayProof
&
\AX$Z \fCenter a \mRA_{\!i} B$
\RightLabel{\fns${\mra_{\!i}}_R$}
\UI$Z \fCenter a \mra_{\!i} B $
\DisplayProof \\

 & \\

\AX$x \fCenter a$
\AX$B \fCenter Y$
\LeftLabel{\fns${\mbra_{\!\!i}}_L$}
\BI$a {\mbra_{\!\!i}} B \fCenter x {\mBRA_{\!\!i}} Y$
\DisplayProof
&
\AX$Z \fCenter a \mBRA_{\!\!i} B$
\RightLabel{\fns${\mbra_{\!\!i}}_R$}
\UI$Z \fCenter a \mbra_{\!\!i} B $
\DisplayProof \\
\end{tabular}
\end{center}
Clearly, the rules above yield the operational rules for the dynamic  modal operators under the translation given early on. Notice that each sequent is always interpreted in one domain; however, since the connectives take arguments of different types (and in this sense we are justified in referring to them as  {\em heterogeneous connectives}), premises of binary rules are of course interpreted in different domains.

\paragraph*{Identity and cut rules.} Axioms  will be given in each type; 
here below,  $\pi \in \mathsf{AtAct}$, and $p\in \mathsf{AtProp}$:

\begin{center}
\begin{tabular}{cc}
\mc{2}{c}{\textbf{Identity Rules}} \\
  & \\
\AXC{\phantom{$a \fCenter a$}}
\LeftLabel{\fns$\pi$ \emph{Id}}
\UI$\pi \fCenter \pi$
\DisplayProof
\quad
&
\quad
\AXC{\phantom{$p \fCenter p$}}
\LeftLabel{\fns$p$ \emph{Id}}
\UI$p \fCenter p$
\DisplayProof
\\
\end{tabular}
\end{center}
\noindent where  the first axiom is of type  $\mathsf{Act}$, and the second one is of type $\mathsf{Fm}$. 

\noindent Further, we allow the following {\em strongly type-uniform} cut rules on the operational terms:

\begin{center}
\begin{tabular}{ccc}
\mc{3}{c}{\textbf{Cut Rules}} \\
 & & \\
\AX$\Gamma \fCenter \delta$
\AX$\delta \fCenter \Delta$
\RightLabel{\fns$\delta$ \emph{Cut}}
\BI$\Gamma \fCenter \Delta$
\DisplayProof
 &
\AX$\Pi\fCenter \alpha$
\AX$\alpha \fCenter \Sigma$
\RightLabel{\fns$\alpha$ \emph{Cut}}
\BI$\Pi \fCenter \Sigma$
\DisplayProof
 &
\AX$X\fCenter A$
\AX$A \fCenter Y$
\RightLabel{\fns$A$ \emph{Cut}}
\BI$X \fCenter Y$
\DisplayProof
\\
\end{tabular}
\end{center}

\paragraph*{Display postulates for  heterogeneous connectives.} Recall that $x$ is a structural variable of type  $\mathsf{TAct}$ or $\mathsf{Act}$, and $Y$ and $Z$ are structural variables of type $\mathsf{Fm}$; for $i = 0,1,$

\begin{center}
\begin{tabular}{rcl}
\mc{3}{c}{\textbf{Actions-Propositions  Display Postulates}} \\
 & \\
\AX$x \mAND_{\!i\,} Y\fCenter Z$
\LeftLabel{\fns{$\mand_i \blacktriangleright_{i}$}}
\doubleLine
\UI$Y \fCenter x {\mBRA_{\!i\,}} Z$
\DisplayProof
 &&
\AX$x \mBAND_i Y\fCenter Z$
\RightLabel{\fns{$\mband_i \vartriangleright_{i}$}}
\doubleLine
\UI$Y \fCenter x {\mRA_{\!i\,}} Z$
\DisplayProof
\\

 && \\

\AX$\pi \mAND_{\!1\,} Y\fCenter Z$
\LeftLabel{\fns{$\mand_1 \blacktriangleleft_{1}$}}
\doubleLine
\UI$\pi \fCenter Z {\mBLA_{\!1\,}} Y$
\DisplayProof
&&
\AX$\pi \mBAND_1 Y\fCenter Z$
\RightLabel{\fns{$\mband_1 \vartriangleleft_{1}$}}
\doubleLine
\UI$\pi \fCenter Z {\mLA_{\!1\,}} Y$
\DisplayProof
\\
 && \\

\AX$\delta \mAND_{\!0\,} Y\fCenter Z$
\LeftLabel{\fns{$\mand_0 \blacktriangleleft_{0}$}}
\doubleLine
\UI$\delta \fCenter Z {\mSBLA_{\!0\,}} Y$
\DisplayProof
&&
\AX$\delta \mBAND_0 Y\fCenter Z$
\RightLabel{\fns{$\mband_0 \vartriangleleft_{0}$}}
\doubleLine
\UI$\delta \fCenter Z {\mSLA_{\!0\,}} Y$
\DisplayProof
\\
\end{tabular}
\end{center}

\noindent Notice that sequents occurring in each display postulate above are {\em not} of the same type. However, it is easy to see that the display postulates preserve the type-uniformity (cf.\ Definition \ref{PDL:def:type-uniformity}); that is, if the premise of any instance of a display postulate is a type-uniform sequent, then so is its conclusion.

\paragraph*{Necessitation, Conjugation, Fischer Servi, and Monotonicity rules.}
For $i = 0,1$, 

\begin{center}
\begin{tabular}{rl}
\mc{2}{c}{\textbf{Necessitation Rules}} \\
 & \\

\AX$\textrm{I}\fCenter W$
\LeftLabel{\fns{${nec_i} \mand$}}
\UI$ x \mAND_{\!i\,} \textrm{I} \fCenter W$
\DisplayProof
 &
 \AX$\textrm{I}\fCenter W$
\RightLabel{\fns{${nec_i} \mband$}}
\UI$x \mBAND_{\!i\,} \textrm{I} \fCenter W$
\DisplayProof
 \\
 \end{tabular}
 \end{center}
 The following rules are derivable from the ones above using the display postulates:
\begin{center}
\begin{tabular}{rl}
\AX$W \fCenter \textrm{I} $
\LeftLabel{\fns{${nec_i} \vartriangleright$}}
\UI$ W \fCenter x \mRA_{\!i\,} \textrm{I}$
\DisplayProof
 &
\AX$W \fCenter \textrm{I} $
\RightLabel{\fns{${nec_i} \blacktriangleright$}}
\UI$W \fCenter x \mBRA_{\!i\,} \textrm{I}$
\DisplayProof
 \\
 \end{tabular}
 \end{center}

\begin{center}
\begin{tabular}{rl}
\mc{2}{c}{\textbf{Conjugation Rules}} \\
 & \\
\AX$x \mAND_{\!\!i\,} ((x \mBAND_{\!\!i\,} Y) \,, Z)\fCenter W$
\LeftLabel{\scriptsize{$({conj_i} \mand)$}}
\UI$Y\,, (x\mAND_{\!\!i\,} Z) \fCenter W$
\DisplayProof
&
\AX$W \fCenter x \mRA_{\!\!i\,} ((x \mBRA_{\!\!i\,} Y) \,, Z)$
\RightLabel{\scriptsize{$({conj_i} \mra)$}}
\UI$W \fCenter Y \,, (x\mRA_{\!\!i\,} Z) $
\DisplayProof
\\

&\\

\AX$x \mBAND_{\!\!i\,} ((x \mAND_{\!\!i\,} Y) \,, Z) \fCenter W$
\LeftLabel{\scriptsize{$({conj_i} \mband)$}}
\UI$Y \,, (x\mBAND_{\!\!i\,} Z) \fCenter W$
\DisplayProof
&
\AX$W \fCenter  x \mBRA_{\!\!i\,} ((x \mRA_{\!\!i\,} Y) \,, Z) $
\RightLabel{\scriptsize{$({conj_i} \mbra)$}}
\UI$W \fCenter  Y \,, (x\mBRA_{\!\!i\,} Z) $
\DisplayProof \\
\end{tabular}
 \end{center}
 The rules above are interderivable with
the following rules using the appropriate display postulates: 
\begin{center}
\begin{tabular}{rl}
\mc{2}{c}{\textbf{Fischer-Servi Rules}} \\

\AX$ (x \mRA_{\!i\,} Y) > (x \mAND_{\!i\,} Z) \fCenter W$
\LeftLabel{\fns{${FS_i}\mand$}}
\UI$x \mAND_{\!i\,} (Y > Z) \fCenter W$
\DisplayProof
 &
\AX$W \fCenter (x \mAND_{\!i\,} Y) > (x \mRA_{\!i\,} Z)$
\RightLabel{\fns{${FS_i}\vartriangleright$}}
\UI$W \fCenter x \mRA_{\!i\,} (Y > Z)$
\DisplayProof
 \\

 & \\

\AX$(x \mBRA_{\!i\,} Y) > (x \mBAND_{\!i\,} Z) \fCenter W$
\LeftLabel{\fns{${FS_i}\mband$}}
\UI$x \mBAND_{\!i\,} (Y > Z) \fCenter W$
\DisplayProof
 &
\AX$W \fCenter (x \mBAND_{\!i\,} Y) > (x \mBRA_{\!i\,} Z)$
\RightLabel{\fns{${FS_i}\blacktriangleright$}}
\UI$W \fCenter x \mBRA_{\!i\,} (Y > Z)$
\DisplayProof
 \\
\end{tabular}
\end{center}
The following rules encode the fact that both arrow- and triangle-shaped heterogeneous connectives are order preserving in their second coordinate.
\begin{center}
\begin{tabular}{rl}
\mc{2}{c}{\textbf{Monotonicity Rules}} \\
 & \\
\AX$(x \mAND_{\!i\,} Y) \,, (x \mAND_{\!i\,} Z) \fCenter W $
\LeftLabel{\fns{${mon_i}\mand$}}
\UI$ x \mAND_{\!i\,} (Y \,, Z)  \fCenter W$
\DisplayProof
&
\AX$W\fCenter ( x \mRA_i Y) \,, (x \mRA_{\!i\,} Z)  $
\RightLabel{\fns{${mon_i}\vartriangleright$}}
\UI$W \fCenter x \mRA_{\!i\,} (Y \,, Z)$
\DisplayProof
\\

 & \\

\AX$ ( x \mBAND_{\!i\,} Y) \,, (x \mBAND_{\!i\,} Z)  \fCenter W $
\LeftLabel{\fns{${mon_i}\mband$}}
\UI$ x \mBAND_{\!i\,} (Y \,, Z)  \fCenter W$
\DisplayProof
 &
\AX$ W \fCenter (x \mBRA_i Y) \,, (x \mBRA_{\!i\,} Z)  $
\RightLabel{\fns{${mon_i}\blacktriangleright$}}
\UI$W \fCenter  x \mBRA_{\!i\,} (Y \,, Z)  $
\DisplayProof
\\
\end{tabular}
\end{center}
\paragraph*{Action rules.} 
The following rules encode   conditions \eqref{PDL:PDL:eq:action0} and \eqref{PDL:PDL:eq:action1}. For   $1\leq j \leq 4$, the subscripts for $\mAND, \mBAND, \mRA, \mBRA$ are omitted since they are uniquely determined by $j$, and $x, y$ are structural variables of the suitable action- or transitive action-type.
\begin{center}
\begin{tabular}{rl}
\mc{2}{c}{\textbf{Actions  Rules}} \\
 & \\
\AX$ x \mAND (y\mAND Z)  \fCenter W $
\doubleLine
\LeftLabel{\fns{${act_j}\mand$}}
\UI$ (x\gSC_{\!\!j} y) \mAND  Z  \fCenter W$
\DisplayProof &
\AX$ x \mBAND (y\mBAND Z)  \fCenter W $
\doubleLine
\LeftLabel{\fns{${act_j}\mband$}}
\UI$ (y\gSC_{\!\!j} x) \mBAND  Z  \fCenter W$
\DisplayProof 
\\
\end{tabular}
\end{center}

The following rules are derivable from the ones above using the display postulates:
\begin{center}
\begin{tabular}{rl}
\AX$ W \fCenter x \mRA(y\mRA Z)  $
\doubleLine
\RightLabel{\fns{${act_j}\vartriangleright$}}
\UI$W \fCenter  (x \gSC_{\!\!j} y) \mRA Z  $
\DisplayProof
&
\AX$ W \fCenter x \mBRA(y\mBRA Z)  $
\doubleLine
\RightLabel{\fns{${act_j}\blacktriangleright$}}
\UI$W \fCenter  (y \gSC_{\!\!j} x) \mBRA Z  $
\DisplayProof
\\
\end{tabular}
\end{center}


\paragraph*{Rules for test and iteration.} Also the {\em unary} heterogeneous structural connectives correspond one-to-one with the operational ones, as illustrated in the following table (the indices are omitted):



\begin{center}
\begin{tabular}{|c|c|c|c|c|c|c|}
\hline
\footnotesize{Structural symbols} & \mc{2}{c|}{ $ \WN$} & \mc{2}{c|}{ ${(\cdot)}^{\oplus} $} &\mc{2}{c|}{ ${(\cdot)}^{\ominus} $} \\
\hline
\footnotesize{Operational symbols} & $\wn$ &\phantom{$\wn$} & ${(\cdot)}^{+}$&\phantom{${(\cdot)}^{+}$}&\phantom{${(\cdot)}^{-} $} & ${(\cdot)}^{-} $ \\
\hline
\end{tabular}
\end{center}
\noindent The operational rules for these connectives are given in the table below, where $i = 0, 1$, and $x$ is a structural variable of suitable action-type or  transitive action-type, uniquely determined so as to satisfy type-regularity.

\begin{center}
\begin{tabular}{@{}lcr@{}}
\mc{3}{c}{\textbf{Test and Iteration Operational Rules}} \\
& & \\

\AX$A \WN_i \fCenter x$
\LeftLabel{\fns$\wn^i_L$}
\UI$A \wn_i \fCenter x$
\DisplayProof
\,
\AX$X \fCenter A$
\RightLabel{\fns$\wn^i_R$}
\UI$X \WN_i \fCenter A \wn_i$
\DisplayProof
 &
\AX$\alpha^{\oplus} \fCenter \Delta$
\LeftLabel{\fns$+_L$}
\UI$\alpha^+ \fCenter \Delta$
\DisplayProof
\,
\AX$\Psi \fCenter \alpha$
\RightLabel{\fns$+_R$}
\UI$\Psi^\oplus \fCenter \alpha^{+}$
\DisplayProof
 &
 
\AX$ \delta\fCenter \Delta $
\LeftLabel{\fns$-_L$}
\UI$\delta^- \fCenter \Delta^{\ominus}$
\DisplayProof
\,
\AX$\Psi \fCenter \delta^{\ominus} $
\RightLabel{\fns$-_R$}
\UI$\Psi \fCenter \delta^-$
\DisplayProof
 \\
\end{tabular}
\end{center}

\begin{center}
\begin{tabular}{lcr}
\mc{3}{c}{\textbf{Test and Iteration Display Postulates}} \\

& & \\

\AX$X \WN_i \fCenter x$
\LeftLabel{\fns$\WN\DWN_i$}
\doubleLine
\UI$X \fCenter x \DWN_i$
\DisplayProof

& &

\AX$\Pi^\oplus \fCenter \Delta$
\LeftLabel{\fns$\oplus\ominus$}
\doubleLine
\UI$\Pi \fCenter \Delta^\ominus$
\DisplayProof
\\
\end{tabular}
\end{center}

\begin{center}
\begin{tabular}{rlcrl}
\mc{2}{c}{\textbf{Test Structural Rules}} \\
 & \\
\AX$X \,, Y \fCenter Z$
\doubleLine
\LeftLabel{\scriptsize{$\WN \mand i$}}
\UI$X\WN_i  \mAND_{\!i\,} Y \fCenter Z$
\DisplayProof
 &
 \AX$X \,, Y \fCenter Z$
\doubleLine
\RightLabel{\scriptsize{$\WN \mband i$}}
\UI$Y\WN_i  \mBAND_{\!i\,} X \fCenter Z$
\DisplayProof
\\
\end{tabular}
\end{center}
The following rules are display equivalent to the ones above.
\begin{center}
\begin{tabular}{rlcrl}

\AX$Y \fCenter X > Z$
\doubleLine
\LeftLabel{\scriptsize{$\WN \vartriangleright i$}}
\UI$Y \fCenter X\WN_i \mRA_{\!i\,} Z$
\DisplayProof
 
 &
\AX$Y \fCenter X > Z$
\doubleLine
\RightLabel{\scriptsize{$\WN \vartriangleright i$}}
\UI$Y \fCenter X\WN_i \mBRA_{\!i\,} Z$
\DisplayProof
 \\
\end{tabular}
\end{center}

\paragraph*{Absorption and promotion/demotion rules.}
\begin{center}
\begin{tabular}{rl}
\mc{2}{c}{\textbf{Absorption   Rules}} \\
 & \\

\AX$\Pi \fCenter \Delta^\ominus$
\LeftLabel{\fns{\emph{abs} $1$}}
\AX$\Sigma \fCenter \Delta^\ominus$
\BI$\Pi \gSC_{\!1\,} \Sigma \fCenter \Delta^\ominus$
\DisplayProof
&
\AX$\Gamma \fCenter \Delta$
\RightLabel{\fns{\emph{abs} $4$}}
\AX$\Xi \fCenter \Delta$
\BI$\Gamma \gSC_{\!4\,} \Xi \fCenter \Delta^\ominus$
\DisplayProof
\\

 & \\

\AX$\Gamma \fCenter \Delta$
\LeftLabel{\fns{\emph{abs} $2$}}
\AX$\Sigma \fCenter \Delta^\ominus$
\BI$\Gamma \gSC_{\!2\,} \Sigma \fCenter \Delta^\ominus$
\DisplayProof
&
\AX$\Sigma \fCenter \Delta^\ominus$
\AX$\Gamma \fCenter \Delta$
\RightLabel{\fns{\emph{abs} $3$}}
\BI$\Sigma \gSC_{\!3\,} \Gamma \fCenter \Delta^\ominus$
\DisplayProof
\\
\end{tabular}
\end{center}


\begin{center}
\begin{tabular}{rl}
\mc{2}{c}{\textbf{Promotion/Demotion  Rules}} \\
 & \\
 
 \AX$\Gamma \gSC_{\!2\,} \Sigma \fCenter \Pi$
\doubleLine
\LeftLabel{\fns{\emph{pro/dem}} 2\textbf{;}1}
\UI$\Gamma^{\ominus} \gSC_{\!1\,} \Sigma \fCenter \Pi$
\DisplayProof

&

\AX$\Pi\fCenter\Gamma \gC_{2} \Sigma$
\doubleLine
\RightLabel{\fns{\emph{pro/dem}} 2$\gC$1}
\UI$\Pi\fCenter\Gamma^{\ominus} \gC_{1} \Sigma $
\DisplayProof
\\

&\\

\AX$\Sigma \gSC_{\!3\,} \Gamma \fCenter \Pi$
\doubleLine
\LeftLabel{\fns{\emph{pro/dem}} 3\textbf{;}1}
\UI$\Sigma \gSC_{\!1\,} \Gamma^{\ominus} \fCenter \Pi$
\DisplayProof

&

\AX$\Pi\fCenter\Sigma \gC_{3} \Gamma $
\doubleLine
\RightLabel{\fns{\emph{pro/dem}} 3$\gC$1}
\UI$\Pi\fCenter\Sigma \gC_{1} \Gamma^{\ominus}$
\DisplayProof
\\

 & \\

\AX$\Delta \gSC_{\!4\,} \Gamma \fCenter \Pi$
\doubleLine
\LeftLabel{\fns{\emph{pro/dem}} 4\textbf{;}2}
\UI$\Delta \gSC_{\!2\,} \Gamma^{\ominus} \fCenter \Pi$
\DisplayProof

&

\AX$\Pi\fCenter\Delta \gC_{4} \Gamma $
\doubleLine
\RightLabel{\fns{\emph{pro/dem}} 4$\gC$2}
\UI$\Pi\fCenter\Delta \gC_{2} \Gamma^{\ominus}$
\DisplayProof
\\

&\\

\AX$\Delta \gSC_{\!4\,} \Gamma \fCenter \Pi$
\doubleLine
\LeftLabel{\fns{\emph{pro/dem}} 4\textbf{;}3}
\UI$\Delta^{\ominus} \gSC_{\!3\,} \Gamma \fCenter \Pi$
\DisplayProof

&  

\AX$\Pi\fCenter\Delta \gC_{4} \Gamma $
\doubleLine
\RightLabel{\fns{\emph{pro/dem}} 4$\gC$3}
\UI$\Pi\fCenter\Delta^{\ominus} \gC_{3} \Gamma$
\DisplayProof
\\

\end{tabular}
\end{center}

\begin{center}
\begin{tabular}{rl}
\mc{2}{c}{
\AX$X \WN_{0}  \fCenter \Delta$
\doubleLine
\LeftLabel{\fns{\emph{pro/dem}} $\WN$}
\UI$X \WN_{1} \fCenter \Delta^{\ominus}$
\DisplayProof}
 \\

& \\

\AX$\Pi^\oplus \mAND_{\!0\,} X \fCenter Y$
\LeftLabel{\fns{\emph{dem} $\mand$}}
\UI$\Pi \mAND_{\!1\,} X \fCenter Y$
\DisplayProof
&

\AX$\Pi^\oplus \mBAND_{\!0\,} X \fCenter Y$
\LeftLabel{\fns{\emph{dem} $\mband$}}
\UI$\Pi \mBAND_{\!1\,} X \fCenter Y$
\DisplayProof
\\
\end{tabular}
\end{center}

\noindent Using the rules above and the Display Postulates, the following rules are derivable:

\begin{center}
\begin{tabular}{rl}

\AX$X \fCenter \Pi^\oplus  \mRA_{\!0\,} Y$
\LeftLabel{\fns{\emph{dem} $\vartriangleright$}}
\UI$X  \fCenter \Pi\mRA_{\!1\,} Y$
\DisplayProof
&
\AX$X \fCenter \Pi^\oplus  \mBRA_{\!0\,} Y$
\RightLabel{\fns{\emph{dem} $\blacktriangleright$}}
\UI$X  \fCenter \Pi\mBRA_{\!1\,} Y$
\DisplayProof
\\
&\\

\AX$\Pi^\oplus  \fCenter X\mSLA_{\!0\,} Y$
\LeftLabel{\fns{\emph{dem} $\vartriangleleft$}}
\UI$\Pi  \fCenter X\mLA_{\!1\,} Y$
\DisplayProof
&
\AX$\Pi^\oplus  \fCenter X\mSBLA_{\!0\,} Y$
\RightLabel{\fns{\emph{dem} $\blacktriangleleft$}}
\UI$\Pi  \fCenter X\mBLA_{\!1\,} Y$
\DisplayProof
\\
\end{tabular}
\end{center}
\paragraph*{Fixed point structural rules.}The following rules correspond to the fixed point axioms.

\begin{center}
\begin{tabular}{rl}
\mc{2}{c}{\textbf{Fixed Point Structural Rules}} \\
 & \\

\AXC{$\Pi\mAND_{\!1\,} X\vdash Y$}
\AXC{$(\Pi\gSC_{\!3\,} \Pi^\oplus)\mAND_{\!1\,} X\vdash Y$}
\LeftLabel{\fns{\emph{FP} $\vartriangle$}}
\BIC{$\Pi^\oplus \mAND_{\!0\,} X \vdash Y  $}
\DisplayProof
 &

\AXC{$\Pi\mBAND_{\!1\,} X\vdash Y$}
\AXC{$(\Pi\gSC_{\!3\,} \Pi^\oplus)\mBAND_{\!1\,} X\vdash Y$}
\RightLabel{\fns{\emph{FP} $\blacktriangle$}}
\BIC{$\Pi^\oplus \mBAND_{\!0\,} X \vdash Y  $}
\DisplayProof
\\
\end{tabular}
\end{center}
Using the rules above and the Display Postulates, the following rules are derivable:
\begin{center}
\begin{tabular}{rl}
\AXC{$X \vdash \Pi \mRA_{\!1\,} Y$}
\AXC{$X \vdash (\Pi \gSC_{\!3\,} \Pi^\oplus) \mRA_{\!1\,} Y$}
\LeftLabel{\fns{\emph{FP} $\vartriangleright$}}
\BIC{$X \vdash \Pi^\oplus \mRA_{\!0\,} Y$}
\DisplayProof
&
\AXC{$X \vdash \Pi \mBRA_{\!1\,} Y$}
\AXC{$X \vdash (\Pi \gSC_{\!3\,} \Pi^\oplus) \mBRA_{\!1\,} Y$}
\RightLabel{\fns{\emph{FP} $\blacktriangleright$}}
\BIC{$X \vdash \Pi^\oplus \mBRA_{\!0\,} Y$}
\DisplayProof
\\
&\\
\AXC{$\Pi \vdash Y \mLA_{\!1\,} X$}
\AXC{$(\Pi\gSC_{\!3\,} \Pi^\oplus) \vdash Y \mLA_{\!1\,} X$}
\LeftLabel{\fns{\emph{FP} $\vartriangleleft$}}
\BIC{$\Pi^\oplus \vdash Y \mSLA_{\!0\,} X$}
\DisplayProof
&
\AXC{$\Pi \vdash Y \mBLA_{\!1\,} X$}
\AXC{$(\Pi\gSC_{\!3\,} \Pi^\oplus) \vdash Y \mBLA_{\!1\,} X$}
\RightLabel{\fns{\emph{FP} $\blacktriangleleft$}}
\BIC{$\Pi^\oplus \vdash Y \mSBLA_{\!0\,} X $}
\DisplayProof
\\
\end{tabular}
\end{center}

\noindent The infinitary {\em iteration} rules are given below:
\begin{center}
\begin{tabular}{rl}
\mc{2}{c}{\textbf{Omega-Iteration Structural Rules}} \\
 & \\

\AXC{$\left(
\begin{array}{c|c}
\Pi^{(n)}\mAND_{\!1\,} X \vdash Y               &n \geq 1
\end{array}\right)$}
\LeftLabel{\fns{\emph{$\omega$ $\vartriangle$}}}
\UIC{$\Pi^\oplus \mAND_{\!0\,} X \vdash Y   $}
\DisplayProof
 &

\AXC{$\left(
\begin{array}{c|c}
\Pi^{(n)}\mBAND_{\!1\,} X \vdash Y               &n \geq 1
\end{array}\right)$}
\RightLabel{\fns{\emph{$\omega$ $\blacktriangle$}}}
\UIC{$\Pi^\oplus \mBAND_{\!0\,} X \vdash Y   $}
\DisplayProof
\\
\end{tabular}
\end{center}

\noindent Using the rules above and the Display Postulates, the following rules are derivable:

\begin{center}
\begin{tabular}{cc}
\AXC{$\left(
\begin{array}{c|c}
X\vdash \Pi^{(n)}   \mRA_{\!1\,} Y            &n \geq 1
\end{array}\right)$}
\LeftLabel{\fns{\emph{$\omega$ $\vartriangleright$}}}
\UIC{$X \vdash \Pi^\oplus  \mRA_{\!0\,} Y  $}
\DisplayProof
&
\AXC{$\left(
\begin{array}{c|c}
X\vdash \Pi^{(n)}   \mBRA_{\!1\,} Y            &n \geq 1
\end{array}\right)$}
\RightLabel{\fns{\emph{$\omega$ $\blacktriangleright$}}}
\UIC{$X \vdash \Pi^\oplus  \mBRA_{\!0\,} Y  $}
\DisplayProof

\\
&\\

\AXC{$\left(
\begin{array}{c|c}
\Pi^{(n)}\vdash Y   \mLA_{\!1\,} X            &n \geq 1
\end{array}\right)$}
\LeftLabel{\fns{\emph{$\omega$ $\vartriangleleft$}}}
\UIC{$\Pi^\oplus  \vdash Y \mSLA_{\!0\,} X  $}
\DisplayProof
&
\AXC{$\left(
\begin{array}{c|c}
\Pi^{(n)}\vdash Y   \mBLA_{\!1\,} X            &n \geq 1
\end{array}\right)$}
\RightLabel{\fns{\emph{$\omega$ $\blacktriangleleft$}}}
\UIC{$\Pi^\oplus  \vdash Y \mSBLA_{\!0\,} X  $}
\DisplayProof
\\
\end{tabular}
\end{center}

\paragraph*{Rules for action constants.} For the following  rules, $j = 1, 2 $ and $k = 1, 3$. Moreover,   $x, y, z$ are structural variables of the suitable action- or transitive action-type. The index on $\gI$ is omitted because it is uniquely determined by $j$ and $k$.

\begin{center}
\begin{tabular}{c c c}
\mc{3}{c}{\textbf{$\gI$-Rules}} \\
  & & \\

\AX$x \fCenter y$
\doubleLine
\RightLabel{\fns$\gI^j_{1R}$}
\UI$ x \fCenter \gI\, \gC_j y$
\DisplayProof

&
\AX$\Delta \fCenter \Gamma$
\doubleLine
\RightLabel{\fns$\gI^3_{1R}$}
\UI$\Delta^{\ominus} \fCenter \gI\, \gC_3 \Gamma$
\DisplayProof

&
\AX$\Delta \fCenter \Gamma$
\doubleLine
\RightLabel{\fns$\gI^4_{1R}$}
\UI$\Delta^{\ominus} \fCenter \gI\, \gC_4 \Gamma$
\DisplayProof
\\

&&\\

\AX$x \fCenter y$
\doubleLine
\RightLabel{\fns$\gI^k_{2R}$}
\UI$x \fCenter y \gC_k\, \gI $
\DisplayProof

&
\AX$\Delta \fCenter \Gamma$
\doubleLine
\RightLabel{\fns$\gI^2_{2R}$}
\UI$\Delta^{\ominus} \fCenter \Gamma \gC_2\, \gI$
\DisplayProof

&
\AX$\Delta \fCenter \Gamma$
\doubleLine
\RightLabel{\fns$\gI^4_{2R}$}
\UI$\Delta^{\ominus} \fCenter \Gamma  \gC_4\, \gI$
\DisplayProof
\\
\end{tabular}
\end{center}

\noindent For the following  rules, $j = 1, 2 $ and $k = 1, 3$. Moreover,   $x, y, z$ are structural variables of the suitable action- or transitive action-type. The index on $\Phi$ is omitted because it is uniquely determined by $j$ and $k$.

\begin{center}
\begin{tabular}{c c c}
\mc{3}{c}{\textbf{$\Phi$-Rules}} \\
  & & \\

\AX$x \fCenter y$
\doubleLine
\LeftLabel{\fns$\Phi^j_{1L}$}
\UI$\Phi\, \gSC_{\!\! j} x \fCenter y$
\DisplayProof

&
\AX$\Delta \fCenter \Gamma$
\doubleLine
\LeftLabel{\fns$\Phi^3_{1L}$}
\UI$\Phi\, \gSC_{\!\! 3} \Delta \fCenter \Gamma^{\ominus}$
\DisplayProof

&
\AX$\Delta \fCenter \Gamma$
\doubleLine
\LeftLabel{\fns$\Phi^4_{1L}$}
\UI$\Phi\, \gSC_{\!\! 4} \Delta \fCenter \Gamma^{\ominus}$
\DisplayProof
\\

&&\\

\AX$x \fCenter y$
\doubleLine
\LeftLabel{\fns$\Phi^k_{2L}$}
\UI$x \gSC_{\!\! k}\, \Phi \fCenter y$
\DisplayProof

&
\AX$\Delta \fCenter \Gamma$
\doubleLine
\LeftLabel{\fns$\Phi^2_{2L}$}
\UI$\Delta \gSC_{\!\!2}\, \Phi \fCenter \Gamma^{\ominus}$
\DisplayProof

&
\AX$\Delta \fCenter \Gamma$
\doubleLine
\LeftLabel{\fns$\Phi^4_{2L}$}
\UI$\Delta \gSC_{\!\!4}\, \Phi \fCenter \Gamma^{\ominus}$
\DisplayProof
\\
\end{tabular}
\end{center}

{\commment{
 \noindent For the following  rules, $j = 1, 2 , 5$ and $j_1 = 3, 4$; moreover,  $k = 1, 3, 5$, and $k_1 = 2, 4$; finally, $h = 1, 2$, and $l = 1, 3$. Moreover,   $x, y, z$ are structural variables of the suitable action- or transitive action-type. The index on $\gI$ is omitted because it is uniquely determined by $j$ and $k$.

\begin{center}
\begin{tabular}{c c c}

\mc{2}{c}{\textbf{Weakening Rules for Actions}} \\
 &  \\

\AX$x \fCenter y$
\doubleLine
\LeftLabel{\fns$W^j_{1L}$}
\UI$z\, \gC_j x \fCenter y$
\DisplayProof

&
\AX$\Delta \fCenter \Gamma$
\doubleLine
\LeftLabel{\fns$W^{j_1}_{1L}$}
\UI$z\, \gC_{j_1} \Delta \fCenter \Gamma^{\ominus}$
\DisplayProof
\\

&\\

\AX$x \fCenter y$
\doubleLine
\LeftLabel{\fns$W^k_{2L}$}
\UI$x \gC_k\, z \fCenter y$
\DisplayProof

&
\AX$\Delta \fCenter \Gamma$
\doubleLine
\LeftLabel{\fns$W^{k_1}_{2L}$}
\UI$\Delta \gC_{k_1}\, z \fCenter \Gamma^{\ominus}$
\DisplayProof
\\

&\\

\AX$x \fCenter y$
\doubleLine
\RightLabel{\fns$W^h_{1R}$}
\UI$x \fCenter z \gC_h y$
\DisplayProof

&
\AX$x \fCenter y$
\doubleLine
\RightLabel{\fns$W^l_{2R}$}
\UI$x \fCenter y \gC_l z$
\DisplayProof

\\

\end{tabular}
\end{center}
}}

\paragraph*{Structural rules for binary action connectives.} For the following  rules (cf.\ \cite{Har, Har13} for Weakening w.r.t. sequential composition), $j = 1, 2 $ and $k = 1, 3$. Moreover,   $x, y, z$ are structural variables of the suitable action- or transitive action-type.

\begin{center}
\begin{tabular}{c c c}
\mc{3}{c}{\textbf{Weakening Rules for Actions}} \\
 & & \\
 
\AX$x \fCenter y$
\RightLabel{\fns$W^h_{1R}$}
\UI$x \fCenter z \gC_j   y$
\DisplayProof

&
\AX$\Delta \fCenter \Gamma$
\RightLabel{\fns$W^3_{1R}$}
\UI$\Delta^{\ominus} \fCenter \Pi \gC_3 \Gamma$
\DisplayProof

&
\AX$\Delta \fCenter \Gamma$
\RightLabel{\fns$W^4_{1R}$}
\UI$\Delta^{\ominus} \fCenter \Gamma' \gC_4 \Gamma$
\DisplayProof
\\

&&\\

\AX$x \fCenter y$
\RightLabel{\fns$W^l_{2R}$}
\UI$x \fCenter y \gC_k z$
\DisplayProof

&
\AX$\Delta \fCenter \Gamma$
\RightLabel{\fns$W^2_{2R}$}
\UI$\Delta^{\ominus}  \fCenter \Gamma \gC_2 \Pi $
\DisplayProof

&
\AX$\Delta \fCenter \Gamma$
\RightLabel{\fns$W^4_{2R}$}
\UI$\Delta^{\ominus}  \fCenter \Gamma \gC_4 \Gamma'$
\DisplayProof
\\
\end{tabular}
\end{center}



\noindent For the following  rules, $k=1,4$   and $x, y, z$ are structural variables of the suitable action- or transitive action-type required by type-regularity.
\begin{center}
\begin{tabular}{c}
\mc{1}{c}{\textbf{Contraction Rule for Actions}} \\
 \\
\AX$y \fCenter x \gC_k x$
\RightLabel{\fns$C^k_R$}
\UI$y \fCenter x$
\DisplayProof
 \\
\end{tabular}
\end{center}

\noindent Additional {\em contraction} rules can be derived using the {\em promotion/demotion} rules. For the following  rules, $k = 1, 4$.  Moreover, $x, y, z$ are structural variables of the suitable action- or transitive action-type required by type-regularity.
\begin{center}
\begin{tabular}{lcr}
\mc{3}{c}{\textbf{Exchange Rules for Actions}} \\
 & &  \\

\AX$ \Sigma \fCenter\Delta \gC_2\Pi $
\RightLabel{\fns$E^{2\gC 3}_R$}
\doubleLine
\UI$\Sigma \fCenter\Pi \gC_3 \Delta$
\DisplayProof
 & 
 &
\AX$z \fCenter x \gC_k y$
\RightLabel{\fns$E^{k\gC k}_R$}
\UI$z \fCenter y \gC_k x$
\DisplayProof
 \\
\end{tabular}
\end{center}
\noindent 
For the following  rules, the indices are omitted, under the convention that they span over all the combinations allowed by the grammar, by type-regularity and by type-alikeness of parameters. The variables $x, y, z$ are  of the suitable action- or transitive action-type. 
\begin{center}
\begin{tabular}{lcr}
\mc{3}{c}{\textbf{Associativity Rules for Actions}} \\

 & & \\
 
\AX$x \gSC (y \gSC z) \fCenter w$
\LeftLabel{\fns$A_{L}$}
\UI$(x \gSC y) \gSC z \fCenter w$
\DisplayProof
 &&
\AX$w \fCenter (z \gC y) \gC x$
\RightLabel{\fns$A_{R}$}
\UI$w \fCenter z \gC (y \gC x)$
\DisplayProof
\\
\end{tabular}
\end{center}

\paragraph*{Dynamics and non-deterministic choice.} In the following {\em choice} rules, the index on $\gC$ is  uniquely determined and is omitted. These rules encode the fact that 
$\blacktriangleleft_1$ and $\vartriangleleft_1$ are monotone in their first coordinate and antitone in their second coordinate.

\begin{center}
\begin{tabular}{rl}

\mc{2}{c}{\textbf{Structural Rules for Non-Deterministic Choice}} \\
 & \\
\AX$\Psi \fCenter (Y \mBLA_{\!\!1} X) \gC (Z \mBLA_{\!\!1} X)$
\LeftLabel{\fns{$choice \blacktriangleleft_1$}}
\UI$\Psi \fCenter (Y \,, Z) \mBLA_{\!\!1} X$
\DisplayProof
 &
 \AX$\Psi \fCenter (Y \mLA_{\!\!1} X) \gC (Z \mLA_{\!\!1} X)$
\RightLabel{\fns{$choice \vartriangleleft_1$}}
\UI$\Psi \fCenter (Y \,, Z) \mLA_{\!\!1} X$
\DisplayProof
\\

&\\
\AX$\Psi \fCenter (X \mBLA_{\!\!1} Y) \gC (X \mBLA_{\!\!1} Z)$
\LeftLabel{\fns{$choice \blacktriangleleft^1$}}
\UI$\Psi \fCenter X \mBLA_{\!\!1} (Y \,, Z)$
\DisplayProof
&
\AX$\Psi \fCenter (X \mLA_{\!\!1} Y) \gC (X \mLA_{\!\!1} Z)$
\RightLabel{\fns{$choice \vartriangleleft^1$}}
\UI$\Psi \fCenter X \mLA_{\!\!1} (Y \,, Z)$
\DisplayProof
 \\
\end{tabular}
\end{center}

\paragraph*{More rules on non-deterministic choice and sequential composition.} 
For the following  rules, $1\leq k\leq 4$. Moreover,    $x, y, z$ are structural variables of the suitable action- or transitive action-type.

\begin{center}
\begin{tabular}{rlcrl}
\mc{5}{c}{\textbf{Display Postulates for Non-Deterministic Choice and Sequential Composition}} \\
 &&&& \\

\AX$z \fCenter x \gC_k y$
\doubleLine
\UI$x \gRA_{\!\! k} z \fCenter y$
\DisplayProof
 &
\AX$z \fCenter x \gC_k y$
\doubleLine
\UI$z \gLA_{\!\! k} y \fCenter x$
\DisplayProof
&&

\AX$x \gSC_{\!\! j} y \fCenter z$
\doubleLine
\UI$y \fCenter x \succ_{ j} z$
\DisplayProof
 &
\AX$x \gSC_{\!\! j} y \fCenter z$
\doubleLine
\UI$x \fCenter z \prec_{ j} y$
\DisplayProof
 \\
\end{tabular}
\end{center}


\noindent Finally, the rules for the operational connectives $ \cup_j$ and $;_j$ are given below. For the following rules,  $1\leq j\leq 4$, and the variables $x, y$ and $f, g$ respectively denote structural and operational terms of suitable type uniquely determined by $j$ and by term-uniformity.
\begin{center}
\begin{tabular}{rlrl}
\mc{4}{c}{\textbf{ Operational Rules for Non-Deterministic Choice and Sequential Composition}} \\
 & & & \\

\AX$f \fCenter x$
\AX$g \fCenter y$
\LeftLabel{\fns$\gor^j_L$}
\BI$f \gor_{ j} g \fCenter x \gC_{ j} y$
\DisplayProof
 & 
 \AX$x \fCenter f \gC_{j} g$
\RightLabel{\fns$\gor^j_R$}
\UI$x \fCenter f \gor_{j} g$
\DisplayProof

  &

\AX$ f \gSC_{\!\! j} g \fCenter x$
\LeftLabel{\fns$\gsc^j_L$}
\UI$f \gsc_{\!\! j} g \fCenter x$
\DisplayProof
 &
\AX$x \fCenter f$
\AX$y \fCenter g$
\RightLabel{\fns$\gsc^j_R$}
\BI$x \gSC_{\!\! j} y \fCenter f \gsc_{\!\! j} g$
\DisplayProof

 \\

\end{tabular}
\end{center}

\commment{
\begin{center}
\begin{tabular}{rlrl}
\AXC{\phantom{$\gbot \fCenter \gI$}}
\LeftLabel{\fns$(\gbot_L)$}
\UI$\gbot \fCenter \gI$
\DisplayProof
 &
\AX$\Pi \fCenter \gI$
\RightLabel{\fns$(\gbot_R)$}
\UI$\Pi \fCenter \gbot$
\DisplayProof
 &
\AX$\gI \fCenter \Pi$
\LeftLabel{\fns$(\gtop_L)$}
\UI$\gtop \fCenter \Pi$
\DisplayProof
 &
\AXC{\phantom{$\gI \fCenter \gtop$}}
\RightLabel{\fns$(\gtop_R)$}
\UI$\gI \fCenter \gtop$
\DisplayProof
 \\

 & & & \\

\AXC{\phantom{$0 \fCenter \Phi$}}
\LeftLabel{\fns$(0_L)$}
\UI$0 \fCenter \Phi$
\DisplayProof
 &
\AX$\Gamma \fCenter \Phi$
\RightLabel{\fns$(0_R)$}
\UI$\Gamma \fCenter 0$
\DisplayProof
 &
\AX$\Phi \fCenter \Gamma$
\LeftLabel{\fns$(1_L)$}
\UI$1 \fCenter \Gamma$
\DisplayProof
 &
\AXC{\phantom{$\Phi \fCenter 1$}}
\RightLabel{\fns$(1_R)$}
\UI$\Phi \fCenter 1$
\DisplayProof
 \\
\end{tabular}
\end{center}

\begin{center}
\begin{tabular}{rl}
\AX$\beta \gLA \alpha \fCenter \Psi$
\LeftLabel{\fns$(\gdla)$}
\UI$\beta \gdla \alpha \fCenter \Psi$
\DisplayProof
 &
\AX$\Sigma \fCenter \beta$
\AX$\alpha \fCenter \Pi$
\RightLabel{\fns$(\gdla$)}
\BI$\Sigma \gLA \Pi \fCenter \beta \gdla \alpha$
\DisplayProof
 \\

 & \\

\AX$\alpha \gRA \beta \fCenter \Psi$
\LeftLabel{\fns$(\gdra)$}
\UI$\alpha \gdra \beta \fCenter \Psi$
\DisplayProof
 &
\AX$\alpha \fCenter \Pi$
\AX$\Sigma \fCenter \beta$
\RightLabel{\fns$(\gdra)$}
\BI$\Pi \gRA \Sigma \fCenter \alpha \gdra \beta$
\DisplayProof
 \\

 & \\

\AX$\beta \fCenter \Sigma$
\AX$\Pi \fCenter \alpha$
\LeftLabel{\fns$(\gla)$}
\BI$\beta \gla \alpha \fCenter \Sigma \gLA \Pi$
\DisplayProof
 &
\AX$\Psi \fCenter \beta \gLA \alpha$
\RightLabel{\fns$(\gla)$}
\UI$\Psi \fCenter \beta \gla \alpha$
\DisplayProof
\\

 & \\

\AX$\Pi \fCenter \alpha$
\AX$\beta \fCenter \Sigma$
\LeftLabel{\fns$(\gra)$}
\BI$\alpha \gra \beta \fCenter \Pi \gRA \Sigma$
\DisplayProof
 &
\AX$\Psi \fCenter \alpha \gRA \beta$
\RightLabel{\fns$(\gra)$}
\UI$\Psi \fCenter \alpha \gra \beta$
\DisplayProof
\\
\end{tabular}
\end{center}
}

{\commment{
\begin{center}
\begin{tabular}{rl@{}rl}
\mc{4}{c}{\textbf{Actions Operational Rules}} \\
 & & & \\

\AXC{\phantom{$\gbot \fCenter \gI$}}
\LeftLabel{\fns$(\gbot_L)$}
\UI$\gbot \fCenter \gI$
\DisplayProof
 &
\AX$\Pi \fCenter \gI$
\RightLabel{\fns$(\gbot_R)$}
\UI$\Pi \fCenter \gbot$
\DisplayProof
 &
\AX$\gI \fCenter \Pi$
\LeftLabel{\fns$(\gtop_L)$}
\UI$\gtop \fCenter \Pi$
\DisplayProof
 &
\AXC{\phantom{$\gI \fCenter \gtop$}}
\RightLabel{\fns$(\gtop_R)$}
\UI$\gI \fCenter \gtop$
\DisplayProof
 \\

 & & & \\

\AXC{\phantom{$0 \fCenter \Phi$}}
\LeftLabel{\fns$(0_L)$}
\UI$0 \fCenter \Phi$
\DisplayProof
 &
\AX$\Gamma \fCenter \Phi$
\RightLabel{\fns$(0_R)$}
\UI$\Gamma \fCenter 0$
\DisplayProof
 &
\AX$\Phi \fCenter \Gamma$
\LeftLabel{\fns$(1_L)$}
\UI$1 \fCenter \Gamma$
\DisplayProof
 &
\AXC{\phantom{$\Phi \fCenter 1$}}
\RightLabel{\fns$(1_R)$}
\UI$\Phi \fCenter 1$
\DisplayProof
 \\

 & & & \\

\AX$\beta \gLA \alpha \fCenter \Psi$
\LeftLabel{\fns$(\gdla)$}
\UI$\beta \gdla \alpha \fCenter \Psi$
\DisplayProof
 &
\AX$\Sigma \fCenter \beta$
\AX$\alpha \fCenter \Pi$
\RightLabel{\fns$(\gdla$)}
\BI$\Sigma \gLA \Pi \fCenter \beta \gdla \alpha$
\DisplayProof
&
\multirow{4}{*}{\AX$\alpha \fCenter \Pi$
\AX$\beta \fCenter \Sigma$
\LeftLabel{\fns$\gor$}
\BI$\alpha \gor \beta \fCenter \Pi \gC \Sigma$
\DisplayProof}
 &
\multirow{4}{*}{\AX$\Psi \fCenter \alpha \gC \beta$
\RightLabel{\fns$\gor$}
\UI$\Psi \fCenter \alpha \gor \beta$
\DisplayProof}
 \\

  & & & \\

\AX$\alpha \gRA \beta \fCenter \Psi$
\LeftLabel{\fns$(\gdra)$}
\UI$\alpha \gdra \beta \fCenter \Psi$
\DisplayProof
 &
\AX$\alpha \fCenter \Pi$
\AX$\Sigma \fCenter \beta$
\RightLabel{\fns$(\gdra)$}
\BI$\Pi \gRA \Sigma \fCenter \alpha \gdra \beta$
\DisplayProof

&

&

 \\

 & & & \\

\multirow{4}{*}{\AX$\alpha \gC \beta \fCenter \Psi$
\LeftLabel{\fns$\gand_L$}
\UI$\alpha \gand \beta \fCenter \Psi$
\DisplayProof}
 &
\multirow{4}{*}{\AX$\Pi \fCenter \alpha$
\AX$\Sigma \fCenter \beta$
\RightLabel{\fns$\gand_R$}
\BI$\Pi \gC \Sigma \fCenter \alpha \gand \beta$
\DisplayProof}
&
\AX$\beta \fCenter \Sigma$
\AX$\Pi \fCenter \alpha$
\LeftLabel{\fns$(\gla)$}
\BI$\beta \gla \alpha \fCenter \Sigma \gLA \Pi$
\DisplayProof
 &
\AX$\Psi \fCenter \beta \gLA \alpha$
\RightLabel{\fns$(\gla)$}
\UI$\Psi \fCenter \beta \gla \alpha$
\DisplayProof
\\

 & & &  \\

&

&
\AX$\Pi \fCenter \alpha$
\AX$\beta \fCenter \Sigma$
\LeftLabel{\fns$(\gra)$}
\BI$\alpha \gra \beta \fCenter \Pi \gRA \Sigma$
\DisplayProof
 &
\AX$\Psi \fCenter \alpha \gRA \beta$
\RightLabel{\fns$(\gra)$}
\UI$\Psi \fCenter \alpha \gra \beta$
\DisplayProof
\\

 & & & \\

\AX$\alpha \gSC \beta \fCenter \Psi$
\LeftLabel{\fns$\gsc_L$}
\UI$\alpha \gsc \beta \fCenter \Psi$
\DisplayProof
 &
\AX$\Pi \fCenter \alpha$
\AX$\Sigma \fCenter \beta$
\RightLabel{\fns$\gsc_R$}
\BI$\Pi \gSC \Sigma \fCenter \alpha \gsc \beta$
\DisplayProof

 &

 &

 \\

 & & & \\
\end{tabular}
\end{center}
}}

\section{Soundness}
\label{PDL:sec:Soundness}
   
\newcommand{\lsem}{\mathopen{[\![}}
\newcommand{\rsem}{\mathclose{]\!]}}
\newcommand{\sem}[1]{\lsem #1 \rsem}

In the present section, we discuss the soundness of the rules of the dynamic calculus and  prove that those which do not involve virtual adjoints (cf.\ Section \ref{PDL:sec:language and rules})
are sound with respect to the standard relational semantics. As we will see, the interpretation of the multi-type language which we are about to define preserves the translation from the standard PDL language to the multi-type one, which was outlined in Tables \ref{PDL:table:translation dynamic modalities} and  \ref{PDL:table: disambiguation}.

A {\em model}  for the multi-type language for PDL is a tuple $N = (W, v)$ such that $W$ is a nonempty set, and $v$ is a variable assignment from $\mathsf{AtProp}\cup \mathsf{AtAct}$ mapping each $p\in \mathsf{AtProp}$ to a subset 
$\sem{p}_V\subseteq W$, 
and each $\pi\in \mathsf{AtAct}$ to a binary relation $R_{\pi}\subseteq W\times W$. Clearly, these models bijectively correspond to standard Kripke models for PDL: indeed, for every standard Kripke model $M = (W, \mathcal{R}, V)$ such that $\mathcal{R} = \{R_\pi\mid \pi\in \mathsf{AtAct}\}$, let $N_M: = (W, v_M)$, where $v_M(p) = V(p)$ 
for every $p\in \mathsf{AtProp}$, and $v_M(\pi) = R_\pi$ for every $\pi\in \mathsf{AtAct}$. Conversely, for every $N = (W, v)$ as above, let $M_N: = (W, \mathcal{R}_N, V_N)$ such that $\mathcal{R}_N: = \{v(\pi)\mid \pi\in \mathsf{AtAct}\}$, and $V_N(p) = v(p)$ for every $p\in \mathsf{AtProp}$. It is immediate to verify that $N_{M_N} = N$ and $M_{N_M} = M$ for every $M$ and $N$ as above. Clearly each model $N$ as above gives rise to algebras $\mathcal{P}(W)$, $\mathcal{P}(W\times W)$ and $\mathcal{T}(W\times W)$, which provide suitable domains of interpretations of terms of type $\mathsf{Fm}$, $\mathsf{Act}$ and $\mathsf{TAct}$, respectively.

Structures will be translated into operational terms of the appropriate type, and operational terms  will be interpreted
according to their type. 
In order to translate structures as operational terms, structural connectives need to be translated as logical connectives. To this effect,
non-modal, propositional structural connectives are associated with pairs of logical connectives, and any given occurrence of a structural connective is translated as one or the other, according to
its (antecedent or succedent) position.
The following table illustrates how to translate each propositional structural connective of type $\mathsf{FM}$, in the upper row, into one or the other of the logical connectives corresponding to it on the lower row: the one on the left-hand (resp.\ right-hand) side, if the structural connective occurs in precedent (resp.\ succedent) position.

%
\begin{center}
\begin{tabular}{|r|c|c|c|c|c|c|c|c|c}
 \hline
 \footnotesize{Structural symbols} & \mc{2}{c|}{$<$}   & \mc{2}{c|}{$>$} & \mc{2}{c|}{$;$} & \mc{2}{c|}{I}    \\
 \hline
 \footnotesize{Operational symbols} & $\pdla$ & $\leftarrow$ & $\pdra$ & $\rightarrow$  & $\wedge$ & $\vee$  & $\top$ & $\bot$    \\
\hline
\end{tabular}
\end{center}
Recall that, in the Boolean setting treated here, the  connectives $\pdla$ and $\pdra$ are interpreted as $A\pdla B : = A\wedge \neg B$ and $A\pdra B : = \neg A\wedge B$.
The soundness of structural and operational rules which only involve active components of type $\mathsf{FM}$
has been discussed in \cite{GAV}  and is here therefore omitted.

\vspace{5px}

The following table illustrates, with the reading indicated above, how to translate each action-type structural connective. 
Notice that some of the operational connectives in the table below are not included in the operational language of the dynamic calculus for PDL. However, as discussed in Section \ref{PDL:sec:language and rules}, 
the operational symbols below are the ones endowed with a semantic justification (so although the indexes are omitted, it is understood that the table below refers to no virtual adjoints). So even if they are not included in the language, they are used in the present section to facilitate the semantic interpretation of structures occurring in sequents. Notice also that  the structural connectives below have a semantic interpretation only when occurring in precedent (resp.\ succedent) position. Hence, not every structure is going to be semantically interpretable.  However, as we will see, this  is enough for checking the soundness of the rules. 
\begin{center}
\begin{tabular}{|r|c|c|c|c|c|c|c|c|}
 \hline
 \footnotesize{Structural symbols}    & \mc{2}{c|}{$\gI$}          &  \mc{2}{c|}{$\gC$}                                       & \mc{2}{c|}{$\gRA$}         & \mc{2}{c|}{$\gLA$}         \\
 \hline
 \footnotesize{Operational symbols} &  & $\gbot$  &  & $\gor$ & $\gdra$ &        & $\gdla$ &         \\
\hline
\mc{9}{c}{}\\
\hline
 \footnotesize{Structural symbols}    &   \mc{2}{c|}{$\Phi$}     &  \mc{2}{c|}{$\gSC$}                                      & \mc{2}{c|}{$\gSCRA$}     & \mc{2}{c|}{$\gSCLA$}    \\
 \hline
 \footnotesize{Operational symbols} & $1$ &               & $\gsc$ &                                                      & 
 & $\gscra$ & 
 & $\gscla$ \\
 \hline
\mc{9}{c}{}\\
 \hline
 \footnotesize{Structural symbols}                       & \mc{2}{c|}{ $ \WN$} & \mc{2}{c|}{ $ \DWN$}& \mc{2}{c|}{ ${(\cdot)}^{\oplus} $} &\mc{2}{c|}{ ${(\cdot)}^{\ominus} $}      \\
 \hline
 \footnotesize{Operational symbols} &  $\wn$ &\phantom{$\wn$} & \phantom{$\dwn$}   &$\dwn$& ${(\cdot)}^{+}$&\phantom{${(\cdot)}^{+}$}&   \phantom{${(\cdot)}^{-} $} & ${(\cdot)}^{-} $       \\
\hline
\end{tabular}
\end{center}

The interpretation of the  connectives above corresponds to the standard one discussed in Sections \ref{PDL:sec:Preliminaries} and \ref{PDL:sec:language and rules}.  
Below, $a$ and $b$ are operational terms of type $\mathsf{Act}$ or $\mathsf{TAct}$, $\alpha$,  $\delta$ and $A$ are operational terms of type $\mathsf{Act}$, $\mathsf{TAct}$ and $\mathsf{Fm}$ respectively, and the indexes are omitted.
\begin{align*}
\sem{a \cup b }_v&=\{(z, z')\in W\times W\mid (z, z')\in \sem{a}_v\, \mbox{ or }\, (z, z')\in\sem{b}_v\}
& \\
\sem{a \ ;\ b}_v&=\{(z, z')\in W\times W\mid \exists w \,.\, (z, w)\in \sem{a}_v \ \&\  (w, z')\in\sem{b}_v\}
& \\
\sem{\gbot }_v&=\{(z, z')\in W\times W\mid (z, z')\neq (z, z')\} = \varnothing
& \\
\sem{1}_v&=\{(z, z)\in W\times W\mid z\in W\}
& \\
\sem{ A \wn}_v&=\{(z, z)\in W\times W\mid   z\in\sem{A}_v\}
&  \\
\sem{ a \dwn }_v&=\{z\in W\mid (z, z)\in \sem{a}_v\}
&  \\
\sem{\alpha^+}_v&=\bigcup_{n\geq 1}\sem{\alpha}_v^n
& \\
\sem{\delta^-}_v&=\sem{\delta}_v
& \\
\sem{a \gscra b }_v&=\{(z, z')\in W\times W\mid \forall w\,.\,((w, z)\in \sem{a}_v\, \Rightarrow \, (w, z')\in\sem{b}_v)\}
& \\
\sem{a \gscla\ b}_v&=\{(z, z')\in W\times W\mid \forall w \,.\,( (z', w)\in \sem{b}_v \, \Rightarrow\,  (z, w)\in\sem{a}_v)\}
& \\
\sem{a \gdra b }_v&=\{(z, z')\in W\times W\mid (z, z')\in \sem{b}_v\, \ \&\  \, (z, z')\notin\sem{a}_v\}
  = \sem{b \gdla\ a}_v & 
\end{align*}
Given this standard interpretation, the verification of the soundness of  the pure-action rules is straightforward, and is omitted. 

As to the heterogeneous connectives, their translation into the corresponding operational connectives is indicated in the table below,
to be understood similarly to the one above,
where the index $i$ ranges over $\{0,1\}$. 
%
\begin{center}
\begin{tabular}{| c|c|c | c | c|c|c|c|c|c|c|c|c|}
\hline
\footnotesize{Structural symbols} &\mc{2}{c|}{$\mAND_{\!i}$}&\mc{2}{c|}{ $\mBAND_{\!i}$}&\mc{2}{c|}{$\mRA_{\!i}$}&\mc{2}{c|}{ $\mBRA_{\!i} $}&\mc{2}{c|}{$\mLA_{\!1}$}&\mc{2}{c|}{ $\mBLA_{\!1} $}\\
\hline
\footnotesize{Operational symbols} &$\mand_i$&\phantom{$\mand_i$}& $\mband_{\!i}$&\phantom{$\mand_i$}&\phantom{$\mra_{\!i}$}&$\mra_{\!i}$&\phantom{$\mra_{\!i}$}& $\mbra_{\!i} $ &\phantom{$\nla_{\!1}$}&$\nla_{\!1}$&\phantom{$\nla_{\!1}$}& $\nbla_{\!1} $\\
\hline
\end{tabular}
\end{center}


The interpretation of the heterogeneous connectives involving formulas and actions corresponds to that of the well known forward and backward modalities discussed in Section \ref{PDL:sec:language and rules}
(below on the right-hand side we recall the notation in the standard language of PDL with adjoint modalities):
\begin{align*}
\sem{\alpha \mand_1 A}&=\{z\in W\mid \exists z' \,.\, z\, R_\alpha\, z' \ \& \ z'\in\sem{A}\}
& \langle\alpha\rangle A\\
\sem{\alpha \mband_{\! 1} \ A}&=\{z\in W\mid \exists z \,.\, z'R_\alpha\, z \ \&\  z'\in\sem{A}\}
& \RESalphaDia A\\
\sem{\alpha \mra_{\!\! 1} \ A}&=\{z\in W\mid \forall z' \,.\, z\,R_\alpha\, z' \Rightarrow z'\in\sem{A}\}
& [\alpha] A \\
\sem{\alpha \mbra_{\!\! 1} \ A}&=\{z\in W\mid \forall z \,.\, z'R_\alpha\, z \Rightarrow z'\in\sem{A}\}
& \RESalphaBox A
\end{align*}
The connectives $\mand_0,\mra_{\!\! 0},\mband_{\! 0},\mbra_{\!\! 0}$, involving formulas and transitive actions, are interpreted in the same way, replacing the relation $R_{\alpha}$ with the appropriate transitive relations $R_\delta$.
Finally, the following syntactic adjoints can be given an interpretation as follows:
\begin{align*}
\sem{B \nla_1 A}_v&=\{(z, z')\in W\times W\mid z \in \sem{A}_v\, \Rightarrow \,  z'\in\sem{B}_v\}
& \\
\sem{B \nbla_1 A}_v&=\{(z, z')\in W\times W\mid  z'\in \sem{A}_v \, \Rightarrow\,  z\in\sem{B}_v\}
&
\end{align*}

It can also be readily verified that the translation of Section \ref{PDL:sec:language and rules} preserves the semantic interpretation, that is, 
 $\sem{A}_M = \sem{A'}_{N_M}$ for every Kripke model $M$ and any PDL-formula $A$, where $A'$ denotes the translation of $A$ in the language of the dynamic calculus.


The soundness of  all operational rules for  heterogeneous connectives
immediately follows from the fact that their semantic
counterparts as defined above are monotone or antitone
in each coordinate.

The soundness of the cut-rules follows from the transitivity of the inclusion relation in the domain of interpretation of each type.

The display rules $(\mand_i , \mbra_{\!\!i})$ and $(\mband_i , \mra_{\!\!i})$ for $0\leq i\leq 1,$ and
$(\mand_1 , \nbla_{\!\!1})$ and $(\mband_{\! 1} , \nla_{\!\!1})$
are sound as the semantics of the triangle and arrow connectives form adjoint pairs.

On the other hand, in the display rules    $(\mand_0 , \msbla_{\!\!0})$ and $(\mband_0 , \msla_{\!\!0})$, the arrow-connectives are what we  call \emph{virtual adjoints} (cf.\ Section \ref{PDL:sec:language and rules}),
that is, they do not have a semantic interpretation.
In the next section, we will discuss a proof method to show that their presence in the calculus is safe.\footnote{At the moment, this is still a conjecture.}
Soundness of  necessitation, conjugation, Fischer Servi, and monotonicity rules is straightforward and proved as in \cite[Section 6.2]{GAV}.
In the remainder of the section, we discuss the soundness of the fixed point and omega-rules.
As to the soundness of \textit{FP}$\mand$, fix a model $N = (W, v)$, assume that the  structures $X$, $Y$ and $\Pi$ have been assigned  interpretations, denoted (abusing notation) $\sem{X}_v$, $\sem{Y}_v\subseteq W$ and $R = \sem{\Pi}_v\subseteq W\times W$,  and that the premises of $FP\mand$ are satisfied, that is:
\[ R^{-1}[\sem{X}_v]\subseteq \sem{Y}_v\quad \mbox{ and }\quad  (R\circ R^+)^{-1}[\sem{X}_v]\subseteq \sem{Y}_v.\]
We need to show that $(R^+)^{-1}[\sem{X}_v]\subseteq \sem{Y}_v$. By definition, $R^+ = \bigcup_{n\geq 1}R^n$, where $R^1 = R$, and $R^{n+1} = R\circ R^{n}$. Hence, $(R^+)^{-1}[\sem{X}_v] = \bigcup_{n\geq 1} (R^{n})^{-1}[\sem{X}_v]$. Therefore it is enough to show that, for every $n\geq 1$,
\[(R^{n})^{-1}[\sem{X}_v]\subseteq \sem{Y}_v.\]
This is shown by induction on $n$. Both  the base and the induction  cases follow by the assumptions. The soundness of the remaining $FP$-rules is shown similarly, and is omitted.

As to the soundness of the rule $\omega\!\mand$, fix $N$, let $\sem{X}_v$, $\sem{Y}_v$ and $R = \sem{\Pi}_v$ as above. The assumption that the premises of $\omega\mand$ are all satisfied boil down to the inclusion $(R^{n})^{-1}[\sem{X}_v]\subseteq \sem{Y}_v$ holding for every $n\geq 1$. Hence, \[(R^+)^{-1}[\sem{X}_v] = \bigcup_{n\geq 1}(R^{n})^{-1}[\sem{X}_v]\subseteq \sem{Y}_v,\]
as required. The soundness of the remaining $\omega$-rules is shown similarly, and is omitted.

\section{Completeness}
\label{PDL:sec:Completeness}

In the present section, we discuss the  completeness of the Dynamic Calculus for PDL w.r.t.\ the semantics of Section \ref{PDL:sec:Soundness}. We show that the translation (cf.\ Section \ref{PDL:translation PDL to DC}) of each of the PDL axioms is derivable in the Dynamic Calculus. 
Unlike what we did in \cite{Multitype}, here we need to consider all possible version of the axioms arising from the disambiguation procedure.
Our completeness proof is indirect, and relies on the fact that PDL is complete   w.r.t.\ the standard Kripke semantics, and that the translation preserves the semantic interpretation on the standard models (as discussed in Section \ref{PDL:sec:Soundness}).

In the present section, we restrict our attention to deriving the box-versions of the  fix point  and induction axioms for PDL. The derivations of the remaining box-axioms for PDL are collected in Appendix \ref{Appendix : completeness for PDL}. The diamond-axioms can be also derived without appealing to the classical box/diamond interdefinability. These derivations follow a similar pattern to the ones given below and in Appendix \ref{Appendix : completeness for PDL}; the details are omitted.

\noindent
$\rule{146.2mm}{0.5pt}$

\textbf{Box-Fix point $+$}\ \ \ $(\alpha \mra A) \pand (\alpha \mra (\alpha^+ \mra A)) \,\dashv\vdash\, \alpha^+ \mra A$

\begin{center}
\begin{tabular}{@{}ll}

\AX$\alpha \fCenter \alpha$
\AX$A \fCenter A$
\BI$\alpha \mra A \fCenter \alpha \mRA A$
\UI$\alpha \mBAND \alpha \mra A \fCenter A$
%
\AX$\alpha \fCenter \alpha$

\AX$\alpha \fCenter \alpha$
\UI$\alpha^\oplus \fCenter \alpha^+$
\AX$A \fCenter A$
\BI$\alpha^+ \mra A \fCenter \alpha^\oplus \mRA A$
\BI$\alpha \mra (\alpha^+ \mra A) \fCenter \alpha \mRA (\alpha^\oplus \mRA A)$
\UI$\alpha \mBAND \alpha \mra (\alpha^+ \mra A) \fCenter \alpha^\oplus \mRA A$
\UI$\alpha^\oplus \mBAND (\alpha \mBAND \alpha \mra (\alpha^+ \mra A)) \fCenter A$
\UI$(\alpha \,{\textbf{\scriptsize{;}}}\, \alpha^\oplus) \mBAND \alpha \mra (\alpha^+ \mra A) \fCenter A$
\LeftLabel{\emph{FP $\blacktriangle$}}
\BI$\alpha^\oplus \mBAND (\alpha \mra A \,, \alpha \mra (\alpha^+ \mra A)) \fCenter A$
\UI$\alpha^\oplus \fCenter A \mLA (\alpha \mra A \,, \alpha \mra (\alpha^+ \mra A))$
\UI$\alpha^+ \fCenter A \mLA (\alpha \mra A \,, \alpha \mra (\alpha^+ \mra A))$
\UI$\alpha^+ \mBAND (\alpha \mra A \,, \alpha \mra (\alpha^+ \mra A)) \fCenter A$
\UI$\alpha \mra A \,, \alpha \mra (\alpha^+ \mra A) \fCenter \alpha^+ \mRA A$
\UI$\alpha \mra A \,, \alpha \mra (\alpha^+ \mra A) \fCenter \alpha^+ \mra A$
\UI$\alpha \mra A \pand \alpha \mra (\alpha^+ \mra A) \fCenter \alpha^+ \mra A$
\DisplayProof
\\

\end{tabular}
\end{center}

\begin{center}
\begin{tabular}{@{}ll}

\AX$\alpha \fCenter \alpha$
\UI$\alpha^\oplus \fCenter \alpha^+$
\AX$A \fCenter A$
\BI$\alpha^+ \mra A\fCenter \alpha^\oplus \mRA A$
\UI$\alpha^+ \mra A \fCenter \alpha \mRA A$
\UI$\alpha^+ \mra A \fCenter \alpha \mra A$
\AX$\alpha \fCenter \alpha$
\UI$\alpha^\oplus \fCenter \alpha^+$
\AX$\alpha \fCenter \alpha$
\UI$\alpha^\oplus \fCenter \alpha^+$
\UI$\alpha^+ \fCenter \alpha^+$
\LeftLabel{\emph{abs$_4$}}
\BI$\alpha^\oplus \,{\textbf{\scriptsize{;}}}\, \alpha^+ \fCenter \alpha^{+\ominus}$
\UI$ (\alpha^\oplus \,{\textbf{\scriptsize{;}}}\, \alpha^+)^{\oplus} \fCenter \alpha^+$


%
\AX$A \fCenter A$
\BI$\alpha^+ \mra A \fCenter (\alpha^\oplus \,{\textbf{\scriptsize{;}}}\, \alpha^+)^{\oplus} \mRA_{\!0\,} A$
\UI$\alpha^+ \mra A \fCenter (\alpha^\oplus \,{\textbf{\scriptsize{;}}}\, \alpha^+) \mRA_{\!1\,} A$
\UI$\alpha^+ \mra A \fCenter \alpha^\oplus \mRA (\alpha^+ \mRA_{\!0\,} A)$
\UI$\alpha^\oplus \mBAND_{\!0\,} \alpha^+ \mra A \fCenter \alpha^+ \mRA_{\!0\,} A$
\UI$\alpha^\oplus \mBAND_{\!0\,} \alpha^+ \mra A \fCenter \alpha^+ \mra A$
\UI$\alpha^+ \mra A \fCenter \alpha^\oplus \mRA (\alpha^+ \mra A)$
\UI$\alpha^+ \mra A \fCenter \alpha \mRA_1 (\alpha^+ \mra A)$
\UI$\alpha^+ \mra A \fCenter \alpha \mra (\alpha^+ \mra A)$
\BI$\alpha^+ \mra A\,, \alpha^+ \mra A \fCenter (\alpha \mra A) \pand (\alpha \mra (\alpha^+ \mra A))$
\LeftLabel{$C_L$}
\UI$\alpha^+ \mra A \fCenter (\alpha \mra A) \pand (\alpha \mra (\alpha^+ \mra A))$
\DisplayProof
 \\

\end{tabular}
\end{center}

\noindent
$\rule{146.2mm}{0.5pt}$

\textbf{Box-Induction $+$}\ \ \ $(\alpha \mra A) \pand (\alpha^+ \mra (A \pra (\alpha \mra A))) \,\vdash\, \alpha^+ \mra A$
\label{Appendix:PDL:Completeness:AxiomOmegaRule}
\smallskip

\noindent The following (incomplete) derivation takes us to the point in which the infinitary rule $\omega \vartriangleleft$ is applied:

\begin{center}
\AXC{$
\left(\begin{array}{l|c}
\ \ \ \ \ \ \ \pi_{n}                                                                                                                                   &               \\
\ \ \ \ \ \ \ \vdots                                                                                                                                   & n \geq 1 \\
\alpha^{(\mathbf{n})} \mBAND 
(\alpha \mra A \,, \alpha^+ \mra (A \pra (\alpha \mra A)))
\fCenter \,A \, &
\end{array}\right)
$}
\RightLabel{$\omega \blacktriangle$}
\UIC{$\alpha^\oplus \mBAND (\alpha \mra A \,, \alpha^+ \mra (A \pra (\alpha \mra A))) \fCenter A$}
\UIC{$\alpha^\oplus \fCenter A \mSLA (\alpha \mra A \,, \alpha^+ \mra (A \pra (\alpha \mra A)))$}
\UIC{$\alpha^+ \fCenter A \mSLA (\alpha \mra A \,, \alpha^+ \mra (A \pra (\alpha \mra A)))$}
\UIC{$\alpha^+ \mBAND (\alpha \mra A \,, \alpha^+ \mra (A \pra (\alpha \mra A))) \fCenter A$}
\UIC{$\alpha \mra A \,, \alpha^+ \mra (A \pra (\alpha \mra A)) \fCenter \alpha^+ \mRA A$}
\UIC{$(\alpha \mra A) \pand (\alpha^+ \mra (A \pra (\alpha \mra A))) \fCenter \alpha^+ \mRA A$}
\UIC{$(\alpha \mra A) \pand (\alpha^+ \mra (A \pra (\alpha \mra A))) \fCenter \alpha^+ \mra A$}
\RightLabel{notation}
\UIC{$\ls\alpha\rs A \pand \ls\alpha^+\rs (A \pra \ls\alpha\rs A) \fCenter \ls\alpha^+\rs A$}
\DisplayProof
\end{center}
\medskip


\noindent To complete the proof we are reduced to showing that each premise of the application of the $\omega \blacktriangle$ rule is derivable, that is:
\begin{proposition}
\label{PDL:prop:induction}
The following sequent is derivable for any $n \geqslant 1$: $$\alpha^{(n)} \mBAND 
(\alpha \mra A \,, \alpha^+ \mra (A \pra (\alpha \mra A)))
\fCenter \,A .$$
\end{proposition}
\noindent In what follows, the abbreviations below will be useful:
\begin{itemize}
\item let $\alpha^{\mathbf{{(\odot n)}}} (-)$ abbreviate $\underbrace{\alpha \odot ( \alpha \odot \ldots (\alpha \,\odot}_{n} \,(\,-\,)) \ldots )$, for $\odot \in \{\mAND, \mBAND, \mRA, \mBRA\}$;
\item let $\alpha^{(\cdot n)} (-)$ abbreviate $\underbrace{\alpha \cdot ( \alpha \cdot \ldots (\alpha \,\cdot}_{n} \,(\,-\,)) \ldots )$, for $\cdot \in \{\mand, \mband, \mra, \mbra\}$;
\item let $\alpha^{\mathbf{(n)}}$ and $\alpha^{n}$ abbreviate $\underbrace{\alpha \gSC ( \alpha \gSC \ldots (\alpha \,\gSC \alpha}_{n}) \ldots )$ and $\underbrace{\alpha \gsc ( \alpha \gsc \ldots (\alpha \,\gsc \alpha}_{n}) \ldots )$, respectively.
\end{itemize}

\begin{lemma}
\label{PDL:lemma:n,n>n+1}
Let $B = A \pra (\alpha\mra A)$. The following sequent is derivable for each $n \geqslant 1$:
$$\alpha^{(\mra n)} (A) \,, \alpha^{(\mra n)} (B) \fCenter \alpha^{(\mRA \mathbf{n+1})} (A).$$
\end{lemma}

\begin{proof}
The statement is proved by the following schematic derivation.

\begin{center}
{\small{
\AX$\alpha \fCenter \alpha$
\AX$\alpha \fCenter \alpha$
%
\AX$\alpha \fCenter \alpha$
\AX$A \fCenter A$
\BI$\alpha\mra A \fCenter \alpha\mRA A$
\LeftLabel{$n-1$ appl's of $\mra_L$}
\dashedLine
\BI$\alpha^{(\mra n)} (A) \fCenter \alpha^{(\mRA \mathbf{n})} A$
\LeftLabel{$n$ appl's of $\vartriangleright \blacktriangle$}
\dashedLine
\UI$\alpha^{(\mBAND \mathbf{n})} (\alpha^{(\mra n)} (A)) \fCenter A$

\AX$\alpha \fCenter \alpha$
\AX$A \fCenter A$
\BI$\alpha\mra A \fCenter \alpha\mRA A$
\BI$A \pra (\alpha\mra A) \fCenter \alpha^{(\mBAND \mathbf{n})} (\alpha^{(\mra n)} (A)) > (\alpha\mRA A)$
\LeftLabel{notation}
\UI$B \fCenter \alpha^{(\mBAND \mathbf{n})} (\alpha^{(\mra n)} (A)) > (\alpha\mRA A)$
\dashedLine
\LeftLabel{$n$ appl's of $\mra_L$}
\BI$\alpha^{(\mra n)} (B) \fCenter \alpha^{(\mRA \mathbf{n})} (\alpha^{(\mBAND \mathbf{n})} (\alpha^{(\mra n)} (A)) > (\alpha\mRA A))$
\dashedLine
\LeftLabel{$n$ appl's of $\vartriangleright \blacktriangle$}
\UI$\alpha^{(\mBAND \mathbf{n})} \alpha^{(\mra n)} (B) \fCenter \alpha^{(\mBAND \mathbf{n})} (\alpha^{(\mra n)} (A)) > (\alpha\mRA A)$
\UI$(\alpha^{(\mBAND \mathbf{n})} (\alpha^{(\mra n)} (A))) \,, (\alpha^{(\mBAND \mathbf{n})} \alpha^{(\mra n)} (B)) \fCenter \alpha\mRA A$
\dashedLine
\RightLabel{$n$ appl's of $\mband$ dis}
\UI$\alpha^{(\mBAND \mathbf{n})} (\alpha^{(\mra n)} (A) \,, \alpha^{(\mra n)} (B)) \fCenter \alpha\mRA A$

\dashedLine
\RightLabel{$n$ appl's of $\mband \vartriangleright$}

\UI$\alpha^{(\mra n)}  (A) \,, \alpha^{(\mra n)} (B) \fCenter \alpha^{(\mRA \mathbf{n+1})} (A)$
\DisplayProof
}}
\end{center}

\end{proof}

\begin{corollary}
\label{PDL:corollary:n,n>n+1}
Let $B = A \pra (\alpha\mra A)$. The following sequent is derivable for each $n \geqslant 1$:
$$\alpha^{(\mra n)} (A) \,, \alpha^{(\mra n)} (B) \fCenter \alpha^{(\mra n+1)} (A).$$
\end{corollary}

\begin{proof}
The schematic derivation in the proof of Lemma \ref{PDL:lemma:n,n>n+1} shows in particular that a derivation for the following sequent exists:
$$\alpha^{(\mBAND \mathbf{n})} (\alpha^{(\mra n)} (A) \,, \alpha^{(\mra n)} (B)) \fCenter \alpha\mRA A.$$
Then, the desired derivation can be obtained by prolonging that derivation with $n$ alternations of $\mra_R$ and $\mband \vartriangleright$, as follows:
\begin{center}
\AX$\alpha^{(\mBAND \mathbf{n})} (\alpha^{(\mra n)} (A) \,, \alpha^{(\mra n)} (B)) \fCenter \alpha\mRA A$
\UI$\alpha^{(\mBAND \mathbf{n})} (\alpha^{(\mra n)} (A) \,, \alpha^{(\mra n)} (B)) \fCenter \alpha\mra A$
\UI$\alpha^{(\mBAND \mathbf{n-1})} (\alpha^{(\mra n)} (A) \,, \alpha^{(\mra n)} (B)) \fCenter \alpha\mRA(\alpha\mra A)$
\UI$\alpha^{(\mBAND \mathbf{n-1})} (\alpha^{(\mra n)} (A) \,, \alpha^{(\mra n)} (B)) \fCenter \alpha\mra(\alpha\mra A)$
\dashedLine
\UI$\alpha^{(\mra n)} (A) \,, \alpha^{(\mra n)} (B) \fCenter \alpha^{(\mra n+1)} (A)$
\DisplayProof
\end{center}
\end{proof}


\begin{lemma}
\label{PDL:lemma:1,1...n-1,n>n+1}
Let  $B = A \pra (\alpha\mra A)$. The following sequent is derivable for each $n \geqslant 1$:
$$\alpha^{(\mra 1)} (A) \,, \alpha^{(\mra 1)} (B) \,, \ldots \,, \alpha^{(\mra n-1)} (B) \,, \alpha^{(\mra n)} (B) \fCenter \alpha^{(\mRA \mathbf{n+1})} (A).$$
\end{lemma}
\begin{proof}
Fix $n\geqslant 1$, let $X_{n+1}$ abbreviate $\alpha^{(\mRA \mathbf{n+1})} (A)$, and for each $1\leqslant i< n$, let $C_i$ and $D_i$ abbreviate $\alpha^{(\mra i)} (A)$ and $\alpha^{(\mra i)} (B)$, respectively. By Corollary \ref{PDL:corollary:n,n>n+1}, a derivation $\pi_i$ of $C_i \,, D_i \fCenter C_{i+1}$ is available for each $1\leqslant i< n$, and by Lemma \ref{PDL:lemma:n,n>n+1}, a derivation $\pi_n$ of $C_n \,, D_n \fCenter X_{n+1}$ is also available. Then the following derivation, which essentially consists in $n-1$ applications of Cut, proves the statement:
\begin{center}
{\small{
\AXC{$\pi_1$}
\noLine
\UIC{$\vdots$}
\noLine
\UIC{$C_1\,, D_1 \fCenter C_2$}
\AXC{$\pi_2$}
\noLine
\UIC{$\vdots$}
\noLine
\UIC{$C_2 \,, D_2 \fCenter C_3$}
\UIC{$C_2 \fCenter C_3 < D_2$}
\RightLabel{\emph{Cut$_1$}}
\BIC{$C_1 \,, D_1 \fCenter C_3 < D_2$}
\UIC{$C_1 \,, D_1 \,, D_2 \fCenter C_3$}
\AXC{$\pi_3$}
\noLine
\UIC{$\vdots$}
\noLine
\UIC{$C_3 \,, D_3 \fCenter C_4$}
\UIC{$C_3 \fCenter C_4 < D_3$}
\RightLabel{\emph{Cut$_2$}}
\BIC{$C_1 \,, D_1 \,, D_2 \fCenter C_4 < D_3$}
\UIC{$C_1 \,, D_1 \,, D_2 \,, D_3 \fCenter C_4$}

\AXC{$\pi_i$}
\noLine
\UIC{$\vdots$}
\noLine
\UIC{\phantom{A}}
\noLine
\UIC{$\cdots$}
\AXC{$\pi_n$}
\noLine
\UIC{$\vdots$}
\noLine
\UIC{$C_n \,, D_n \fCenter X_{n+1}$}
\UIC{$C_n \fCenter X_{n+1} < D_n$}
\RightLabel{\emph{Cut$_{n-1}$}}
\dashedLine
\TIC{$C_1 \,, D_1\,, \ldots \,, D_{n-1} \fCenter X_{n+1} < D_n$}
\UIC{$C_1 \,, D_1\,, \ldots \,, D_{n-1}\,, D_n \fCenter X_{n+1}$}
\UIC{$\alpha^{(\mra 1)} (A) \,, \alpha^{(\mra 1)} (B) \,, \ldots \,, \alpha^{(\mra n-1)} (B) \,, \alpha^{(\mra n)} (B) \fCenter \alpha^{(\mRA n+1)} (A)$}
\DisplayProof
}}
\end{center}
\end{proof}


\begin{lemma}
\label{PDL:lemma:+>n}
The following sequent is derivable for each $n \geqslant 1$ and  every formula $C$: $$\alpha^+ \mra C \,\vdash\, \alpha^{(\mra n)} (C).$$
\end{lemma}
\begin{proof}
For $n=1$, the following derivation proves the statement:

\begin{center}
\AX$\alpha \fCenter \alpha$
\UI$\alpha^\oplus \fCenter \alpha^+$
\AX$C \fCenter C$
\BI$\alpha^+ \mra C \fCenter \alpha^\oplus \mRA C$
\UI$\alpha^+ \mra C \fCenter \alpha \mRA C$
\UI$\alpha^+ \mra C \fCenter \alpha \mra C$
\DisplayProof
\end{center}

\noindent For $n \geqslant 2$, the following schematic derivation proves the statement:

\begin{center}
\AX$\alpha \fCenter \alpha$
\UI$\alpha^\oplus \fCenter \alpha^+$
\UI$\alpha \fCenter \alpha^{+ \ominus}$

\AX$\alpha \fCenter \alpha$
\UI$\alpha^\oplus \fCenter \alpha^+$
\UI$\alpha \fCenter \alpha^{+ \ominus}$

\def\fCenter{\cdots}
\AX$\fCenter$

\def\fCenter{\ \vdash \ }
\AX$\alpha \fCenter \alpha$
\UI$\alpha^\oplus \fCenter \alpha^+$
\UI$\alpha \fCenter \alpha^{+ \ominus}$
\AX$\alpha \fCenter \alpha$
\UI$\alpha^\oplus \fCenter \alpha^+$
\UI$\alpha \fCenter \alpha^{+ \ominus}$
\RightLabel{\emph{abs}}
\BI$\alpha \,\textbf{;}\, \alpha \fCenter \alpha^{+ \ominus}$

\dashedLine
\RightLabel{$n-2$ appl's of \emph{abs}}
\TI$\alpha^{\textbf{(n-1)}} \fCenter \alpha^{+ \ominus}$

\BI$\alpha \,\textbf{;}\, \alpha^{\textbf{(n-1)}} \fCenter \alpha^{+ \ominus}$
\UI$(\alpha \,\textbf{;}\, \alpha^{\textbf{(n-1)}})^\oplus \fCenter \alpha^+$

\AX$C \fCenter C$

\BI$\alpha^+ \mra C \fCenter (\alpha \,\textbf{;}\, \alpha^{\textbf{(n-1)}})^\oplus \mRA C$
\UI$\alpha^+ \mra C \fCenter (\alpha \,\textbf{;}\, \alpha^{\textbf{(n-1)}}) \mRA C$
\RightLabel{${act} \vartriangleright$}
\UI$\alpha^+ \mra C \fCenter \alpha \mRA (\alpha^{\textbf{(n-1)}} \mRA C)$
\RightLabel{$\vartriangleright \blacktriangle$}
\UI$\alpha \mBAND \alpha^+ \mra C \fCenter \alpha^{\textbf{(n-1)}} \mRA C$
\RightLabel{notation}
\UI$\alpha \mBAND \alpha^+ \mra C \fCenter (\alpha \,\textbf{;}\, \alpha^{\textbf{(n-2)}}) \mRA C$
\dashedLine
\RightLabel{$(\ast)$}

\UI$\alpha \mBAND (\alpha^{\textbf{($\mBAND$ n-2)}} (\alpha^+ \mra C)) \fCenter \alpha \mRA C$
\RightLabel{$\mra$}
\UI$\alpha \mBAND (\alpha^{\textbf{($\mBAND$ n-2)}} (\alpha^+ \mra C)) \fCenter \alpha \mra C$
\RightLabel{$\blacktriangle \vartriangleright$}
\UI$\alpha^{\textbf{($\mBAND$ n-2)}} (\alpha^+ \mra C) \fCenter \alpha \mRA \alpha \mra C$
\dashedLine
\RightLabel{$(\ast\ast)$}
\UI$\alpha^+ \mra C \fCenter \alpha^{(\mra n)} (C)$
\DisplayProof
\end{center}

$(\ast)$ $n-2$ alternating applications of the structural rule   {\em act}$\vartriangleright$ and of the display postulate for the connectives $\vartriangleright$ and $ \blacktriangle$.

$(\ast\ast)$ $n-1$ alternating applications of the operational rule $\mra_R$ and of the display postulate for the connectives $\blacktriangle$ and $ \vartriangleright$.
\end{proof}


\begin{lemma}
\label{PDL:lemma:+(1,1...n-1,n>n+1)}
Let $B = A \pra (\alpha\mra A)$. The following sequent is derivable for each $n \geqslant 1$:
$$\alpha^{(\mra 1)}(A) \,, \alpha^+ \mra B \fCenter \alpha^{(\mRA \mathbf{n})}(A).$$
\end{lemma}
\begin{proof}
Fix $n\geqslant 1$, let $X_{n+1}$ abbreviate $\alpha^{(\mRA \mathbf{n+1})} (A)$, let $D^+$ abbreviate $\alpha^+\mra B$, and for each $1\leqslant i\leqslant n$, let $C_i$ and $D_i$ abbreviate $\alpha^{(\mra i)} (A)$ and $\alpha^{(\mra i)} (B)$, respectively.
By Lemma \ref{PDL:lemma:1,1...n-1,n>n+1}, a derivation $\pi_n$ of the sequent $$C_1 \,, D_1 \,, \ldots \,, D_n \fCenter X_{n+1}$$ is available for each $n \geqslant 1$ and for $B = A \pra (\alpha \mra A)$.

By Lemma \ref{PDL:lemma:+>n}, for each $1\leqslant i\leqslant n$, a derivation $\pi'_i$ of the sequent $\alpha^+ \mra C \,\vdash\, \alpha^{(\mra i)}(C)$ is available for any $C$, so in particular for $C = B$ we get a derivation of $$D^+\vdash D_i.$$

Applying Cut $n-1$ times, the following derivation proves the statement:
\begin{center}
\AXC{$\pi'_n$}
\noLine
\UIC{$\vdots$}
\noLine
\UIC{$D^+ \fCenter D_n$}
\AXC{$\pi'_i$}
\noLine
\UIC{$\vdots$}
\noLine
\UIC{\phantom{A}}
\noLine
\UIC{$\cdots$}

\AXC{$\pi'_1$}
\noLine
\UIC{$\vdots$}
\noLine
\UIC{$D^+ \fCenter D_1$}
\AXC{$\pi_n$}
\noLine
\UIC{$\vdots$}
\noLine
\UIC{$C_1 \,, D_1 \,, \ldots \,, D_n \fCenter X_{n+1}$}
\RightLabel{$D_1$ in display}
\dashedLine
\UIC{$D_1 \fCenter Y_1$}
\RightLabel{\emph{Cut$_1$}}
\BIC{$D^+ \fCenter Y_1$}
\RightLabel{$D_2$ in display}
\dashedLine
\UIC{$D_2 \fCenter Y_2$}
\RightLabel{$n-2$ sequences of display and Cut}
\dashedLine
\UIC{$D_n \fCenter Y_n$}
\RightLabel{Cut$_{n-1}$}
\TIC{$D^+\fCenter Y_n$}
\RightLabel{Display}
\dashedLine
\UIC{$C_1\,, \underbrace{D^+ \,, \ldots \,, D^+}_{n} \fCenter X_{n+1}$}
\RightLabel{Contraction}
\dashedLine
\UIC{$C_1 \,, D^+\fCenter X_{n+1}$}


\DisplayProof
\end{center}
\end{proof}
\noindent Now we can finish the proof of Proposition \ref{PDL:prop:induction} as follows:
\begin{proof}
By Lemma \ref{PDL:lemma:+(1,1...n-1,n>n+1)},  a derivation of the sequent $\alpha^{(\mra 1)} (A) \,, \alpha^+ \mra B \fCenter \alpha^{(\mRA \mathbf{n})} (A)$ exists; then the desired derivation is obtained by prolonging that derivation as shown below.

\begin{center}
\AX$\alpha^{(\mra 1)} (A) \,, \alpha^+ \mra B \fCenter \alpha^{(\mRA \mathbf{n})} (A)$
\dashedLine
\RightLabel{$n$ appl's of $\vartriangleright \blacktriangle$}
\UI$\alpha^{(\mBAND \mathbf{n})} (\alpha^{(\mra 1)}(A) \,, \alpha^+ \mra B) \fCenter A$
\dashedLine
\RightLabel{$n$ appl's of ${act} \blacktriangle$}
\UI$\alpha^{(\mathbf{n})} \mBAND (\alpha^{(\mra 1)}(A) \,, \alpha^+ \mra B) \fCenter A$
\DisplayProof
\end{center}
\end{proof}


\section{Cut-elimination}
\label{PDL:sec:Cut-elimination}

In the present section, we prove that the multi-type display calculus for PDL is a proper display calculus (cf.\ Definition \ref{PDL:def:quasi-displayable multi-type calculus}). By Theorem \ref{PDL:thm:meta multi}, this is enough to establish that the calculus enjoys the cut elimination and the subformula property.
Conditions C$_1$, C$_2$, C$_3$, C$_4$, C$_5$, C''$_5$, C'$_6$, C'$_7$ and C$_{10}$ are straightforwardly verified by inspecting the rules and are left to the reader. Condition  C'$_2$ can be straightforwardly verified by inspection on the rules, for instance by observing that the domains and codomains of adjoints are rigidly determined.

The following proposition shows that condition C$_9$ is met:
\begin{proposition}
\label{PDL:prop:type regular}
Any derivable sequent in the calculus for PDL is type-uniform.
\end{proposition}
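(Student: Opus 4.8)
The plan is to prove type-uniformity by induction on the height of derivations. The statement asserts that any derivable sequent is type-uniform, meaning (as is standard in multi-type display calculi) that in every sequent $X \vdash Y$ the structures $X$ and $Y$ belong to the same type, and moreover that each rule respects the type-assignment discipline so that the property propagates. First I would fix a precise formulation of what ``type-uniform'' means for this calculus: every well-formed structure is assigned a unique type among $\mathsf{Fm}$, $\mathsf{Act}$, $\mathsf{TAct}$ (and the heterogeneous interface types), and a sequent is type-uniform when its two sides carry the same type. The base case is the axioms (identity atoms and the structural constants), where one simply reads off that both sides are assigned the same type by construction of the language.

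For the inductive step I would proceed by a case distinction on the last rule applied, checking in each case that if the premises are type-uniform then so is the conclusion. The bulk of the cases --- the pure propositional rules, the pure-action rules, and the cut rules within a single type --- are immediate, since these rules do not mix types and the displayed structures on both sides retain their type. The display rules require a little more care: for an adjunction rule such as $(\mand_i, \mbra_{\!\!i})$ one must verify that moving a structure across the turnstile via the residuated connective sends it to a structure of the correct type on the other side, which is exactly what the type-signature of the heterogeneous connectives guarantees. Here the earlier remark that ``the domains and codomains of adjoints are rigidly determined'' (used to verify C'$_2$) does the decisive work: each heterogeneous structural connective has a fixed input type and output type, so the type of the conclusion is forced.

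The genuinely interesting cases are the rules that cross type boundaries, namely those introducing the heterogeneous connectives $\mand_i, \mband_i, \mra_{\!\!i}, \mbra_{\!\!i}$ and the associated structural symbols, together with the fixed-point and $\omega$-rules. For these I would tabulate, for each heterogeneous connective, the type of each of its arguments and the type of the resulting compound structure, and then check that every introduction rule matches a premise whose displayed component has precisely the required type. The $\omega$-rule $\omega\mand$ and the fixed-point rules $FP\mand$ deserve explicit mention because they have premises of varying shape (indexed by $n \geq 1$, or with iterated occurrences of the action structure), but since every such premise is built from the same structures and the action constructors $(\cdot)^{\oplus}$, $\gSC$ preserve the action type, all premises and the conclusion share a common type assignment.

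I expect the main obstacle to be purely bookkeeping rather than conceptual: assembling a complete and correct inventory of the type-signatures of all structural connectives and verifying uniformity across the full rule set without omission. The risk is not that any single case is hard, but that the calculus has many connectives (including the virtual adjoints, whose type-signatures must still be declared even though they lack a semantic interpretation), so the verification must be exhaustive. Because each rule is by design type-respecting, once the signatures are fixed the induction closes mechanically; the proof is therefore best presented by exhibiting the type-signature table and remarking that inspection of the rules confirms the inductive step in every case.
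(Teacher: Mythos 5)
Your proposal follows exactly the paper's argument: induction on the height of the derivation, with the base case given by inspecting the axioms ($\pi \vdash \pi$, $p \vdash p$, $\bot \vdash \mathrm{I}$, $\mathrm{I} \vdash \top$) and the inductive step verified by checking that every rule preserves type-uniformity and that the Cut rules are strongly type-uniform. The paper states this inspection in one line where you spell out the bookkeeping (type-signatures of the heterogeneous connectives, display rules, fixed-point and $\omega$-rules), but the underlying proof is the same.
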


\begin{proof}
We prove the  proposition by induction on the height of the derivation.
The base case is verified by inspection; indeed,  the  following axioms are type-uniform by definition of their constituents:

\[ \pi \vdash \pi \quad p \vdash p \quad \bot \vdash \textrm{I}  \quad \textrm{I} \vdash \top \]

\noindent As to the inductive step, one can verify by inspection that all the rules of the calculus preserve type-uniformity, and that the Cut rules are strongly type-uniform.
\end{proof}
\noindent Finally, the verification steps for C'$_8$ are collected in Appendix \ref{Appendix : Cut elimination for PDL, principal stage}.

\section{The open issue of conservativity}
\label{PDL:sec:Conservativity}

In the present section, we expand on the difficulties encountered in the proof of conservativity for the Dynamic Calculus for PDL. 

\paragraph*{Semantic argument.} 
The main avenue to prove the conservativity of a display calculus w.r.t.\ the original logic that the calculus is meant to capture is semantic. 
Namely, if the original logic is complete w.r.t.\ a given semantics, then it is enough to prove that every rule is sound w.r.t.\ that semantics.
This is not possible in the case of the Dynamic Calculus for PDL, since some display rules are not interpretable in the semantics due to the presence of virtual adjoints (cf.\ Section \ref{PDL:sec:language and rules}).
This situation is analogous to that of the Dynamic Calculus for EAK.

\paragraph*{Syntactic elimination of virtual adjoints.}
In the  setting of the Dynamic Calculus for EAK, our proof was syntactical, and its pivot step was showing that any valid proof-tree the root of which is operational and  of type $\mathsf{Fm}$ can be rewritten into a valid proof-tree involving no virtual adjoints (cf.\
\cite[Section 7]{Multitype}).
This process of removing virtual adjoints could take place essentially because the action-grammar of EAK was very poor.
In the case of PDL, because the presence of the iteration in the
grammar of actions, this fact is not true. 
However, the fact that virtual adjoints occur in essential ways in derivation trees of   operational sequents $A\vdash B$  of type $\mathsf{Fm}$
does not imply per se that the calculus is not sound w.r.t.\ the original language.

\paragraph*{Display-Type Calculi.} 
Another option would be modifying the Dynamic Calculus for PDL so as to make it a display-type calculus rather than a display calculus. 
The modifications would require removing all the rules involving virtual adjoints and replacing the cut rules with suitable surgical cuts.
However, in order to obtain a complete calculus, certain rules which are derived in the original Dynamic Calculus would also need to be added.
This is the case of the following rule:
\[
\AX $ \alpha^{\oplus} \mBAND X \fCenter Y $
\UI $ \alpha^{+} \mBAND X \fCenter Y $
\DisplayProof
\]
Unfortunately, the rule above violates condition C$_5$ requiring that principal constituents be displayed. Hence, 
cut elimination cannot be proved for the resulting calculus via Theorem 4.1 in \cite{TrendsXIII}.

\paragraph*{Conservativity via translation.}
In \cite{DCGT14} and \cite{CDGT13ext}, the conservativity issue for a display calculus for Full Intuitionistic Linear Logic (FILL) was resolved with a technique which we intend to adapt to PDL.
This adaptation is still work in progress.
In what follows, we report on the proof strategy adopted in \cite{DCGT14}, and discuss its possible adaptations.
The main steps in the proof strategy are:
\begin{enumerate}
\item  define a sound and complete display calculus for
an extension of the logic with additional adjunctions.
The extension considered in \cite{DCGT14} is  Bi-Intuitionistic Linear Logic (BiILL).

\item  translate the display calculus to a shallow inference nested sequent calculus.

\item  translate the shallow inference nested sequent  calculus to a deep inference nested sequent calculus.

\item  prove that the deep inference nested sequent calculus is sound with respect to the original logic. In the case of \cite{CFPS14}, the authors prove that the deep inference nested sequent calculus is sound with respect to FILL.
\end{enumerate}
The adaptation of this technique to  the  setting of PDL is not straightforward.
For instance, the first translation  transforms logical connectives into meta-linguistic data structures such as $\vdash$ without losing information. 
The naive adaptation of  this step to the setting of PDL
would make us lose information. This direction is still work in progress.

\section{Conclusions}
\label{PDL:sec:Conclusions}
   The calculus introduced in the present chapter is an attempt at extending the methodology of display calculi to a fully-fledged PDL-type setting. Previous attempts in this direction (e.g.\ \cite{Wa98}) exclude both the Kleene star and the positive iteration. Accounting for these operations is proof-theoretically challenging, and indeed, the existing proposals in the literature, also outside the display calculus methodology, typically witness a trade-off between achieving \emph{syntactical} full cut elimination at the price of having infinitary rules in the system (e.g.\ \cite{Poggiolesi}), or dispensing with infinitary rules at the price of achieving cut elimination modulo analytic cut(s) (e.g.\ unpublished manuscript \cite{Har?}). The present proposal aims at paving the way for  escaping this trade-off. Indeed, our starting point is the basic understanding that the induction axioms/induction rules (which are the main hurdle to a smooth proof-theoretic treatment of PDL)  ingeniously encode {\em by means of formulas} a piece of information which by rights pertains to {\em actions}; namely, they encode the relation between an action and its (reflexive and) transitive closure. This encoding is done either by resorting to infinitary axioms/rules, or by introducing some forms of `loops' (i.e.\ formulas appearing both in the antecedent and in the consequent of an implication). Each of these two ways gives rise to issues which hinder a smooth proof-theoretic treatment of PDL. Taken together, these two alternatives are at the basis of the trade-off we wish to escape. Our idea for a solution (which needs to be perfected) involves introducing enough expressivity in the language so that formulas are not to be relied upon  anymore  to encode a piece of information which strictly speaking pertains purely to {\em actions}, and neither pertains to formulas, nor to the interaction between formulas and actions. 

In particular,  we aim at describing the proof-theoretic behaviour  of the positive  iteration operation $+$ in terms of the order-theoretic behaviour of the transitive closure. Namely, we make use of the well known fact that the map associating each binary relation on a given set $W$ to its transitive closure  can be characterized order-theoretically as the left adjoint of the inclusion map sending the transitive relations on $W$ into $\mathcal{P}(W\times W)$. The introduction of {\em two} different types of actions is then motivated by the need  to  properly express this adjunction.  Thus, we expand the language, both at the structural and at the operational level, with the following pair of adjoint maps:
\[
(\cdot)^+ :  \mathsf{Act} \to \mathsf{TAct}\quad\quad (\cdot)^- : \mathsf{TAct} \to \mathsf{Act}.
\]
The adjunction relation $(\cdot)^+\dashv (\cdot)^-$ is not enough to capture the informational content of the transitive closure. The missing pieces are: (1) the map $(\cdot)^-$ being an order-embedding; (2) the fact that the $\mathsf{TAct}$-type elements are transitive, i.e.\ $\delta; \delta \vdash \delta$ for each $\delta \in \mathsf{TAct}$. Neither piece of information is captured at the operational level. Indeed, we can only prove 

\begin{center}
\AX$\delta \fCenter \delta$
\UI$\delta^- \fCenter \delta^\ominus$
\UI$\delta^{-\oplus} \fCenter \delta$
\UI$\delta^{-+} \fCenter \delta$
\DisplayProof
%
%
\\
\end{center} 
\noindent 
Hence, we had to resort to the omega-induction rules (which, besides being infinitary, take the form of interaction rules between formula-type and action-type terms) to encode the transitive closure and derive the induction axioms. We conjecture that being able to express transitive closure at the structural level is key to dispensing with the infinitary rules, which is our next goal for future developments in this line research. 

Finally, it is perhaps worth stressing that considerations such as the ones just made above can be made in a meaningful way only in the context of a multi-type environment in which actions and formulas enjoy equal standing as first-class citizens. Thus, the multi-type approach can also function as a `conceptual  tool', by means of which technical difficulties such as the ones mentioned above can be explained in terms of problems of {\em expressivity}. In their turn, properties and considerations involving different degrees of expressivity can  then be sharpened and made precise. 

\newpage

\section{Appendix}
\label{PDL:sec:Appendix}

\subsection{The Calculus for the Propositional Base of PDL}
\label{Appendix : The calculus for the propositional base of PDL}

\begin{center}
\begin{tabular}{rlcrl}
\mc{5}{c}{\textbf{Propositions Structural Rules}} \\

& & & & \\

\AX$X \fCenter Y$
\doubleLine
\LeftLabel{\fns$\textrm{I}^1_{L}$}
\UI$\textrm{I} \fCenter Y < X$
\DisplayProof
 &
\AX$X \fCenter Y$
\doubleLine
\RightLabel{\fns$\textrm{I}^1_{R}$}
\UI$X < Y \fCenter \textrm{I}$
\DisplayProof
 & &
\AX$X \fCenter Y$
\LeftLabel{\fns$\textrm{I}^2_{L}$}
\doubleLine
\UI$\textrm{I} \fCenter X > Y$
\DisplayProof
 &
\AX$X \fCenter Y$
\RightLabel{\fns$\textrm{I}^2_{R}$}
\doubleLine
\UI$Y > X \fCenter \textrm{I}$
\DisplayProof
 \\

  & & & & \\

\AX$X \fCenter Z$
\LeftLabel{\fns$W^1_{L}$}
\UI$Y \fCenter Z < X$
\DisplayProof
 &
\AX$X \fCenter Z$
\RightLabel{\fns$W^1_{R}$}
\UI$X < Z \fCenter Y$
\DisplayProof
 & &
\AX$X \fCenter Z$
\LeftLabel{\fns$W^2_{L}$}
\UI$Y \fCenter X > Z$
\DisplayProof
 &
\AX$X \fCenter Z$
\RightLabel{\fns$W^2_{R}$}
\UI$Z > X \fCenter Y$
\DisplayProof
 \\

 & & & & \\

\AX$X \,, X \fCenter Y$
\LeftLabel{\fns$C_L$}
\UI$X \fCenter Y $
\DisplayProof
 &
\AX$Y \fCenter X \,, X$
\RightLabel{\fns$C_R$}
\UI$Y \fCenter X$
\DisplayProof
 & &
\AX$X \,, (Y \,, Z) \fCenter W$
\LeftLabel{\fns$A_{L}$}
\UI$(X \,, Y) \,, Z \fCenter W $
\DisplayProof
&
\AX$W \fCenter (Z \,, Y) \,, X$
\RightLabel{\fns$A_{R}$}
\UI$W \fCenter Z \,, (Y \,, X)$
\DisplayProof
 \\

 & & & & \\

\AX$Y \,, X \fCenter Z$
\LeftLabel{\fns$E_L$}
\UI$X \,, Y \fCenter Z $
\DisplayProof
 &
\AX$Z \fCenter X \,, Y$
\RightLabel{\fns$E_R$}
\UI$Z \fCenter Y \,, X$
\DisplayProof
 & &
\AX$X > (Y \,; Z) \fCenter W$
\doubleLine
\LeftLabel{\fns\emph{Gri$_{L}$}}
\UI$(X > Y) \,; Z \fCenter W$
\DisplayProof
&
\AX$W \fCenter X > (Y \,; Z)$
\doubleLine
\RightLabel{\fns\emph{Gri$_{R}$}}
\UI$W \fCenter (X > Y) \,; Z$
\DisplayProof
 \\

 & & & & \\
\end{tabular}
\end{center}

\noindent The last rules in the table above, \emph{Gri}$_L$ and \emph{Gri}$_R$, are known as Grishin's rules: here they are useful to force the classical behaviour of our propositional base (if we remove \emph{Gri}, we will obtain a weaker logic cfr.\ \cite{Gore1}). 

\begin{center}
\begin{tabular}{rl}
\mc{2}{c}{\textbf{Propositions Display Postulates}} \\
 & \\

\AX$X \,, Y \fCenter Z$
\LeftLabel{\fns{$$}}
\doubleLine
\UI$Y \fCenter X > Z$
\DisplayProof
 &
\AX$Z \fCenter X \,, Y $
\RightLabel{\fns{$$}}
\doubleLine
\UI$X > Z \fCenter Y$
\DisplayProof
 \\

 & \\

\AX$X \,, Y \fCenter Z$
\LeftLabel{\fns{$$}}
\doubleLine
\UI$X \fCenter Z < Y$
\DisplayProof
 &
\AX$Z \fCenter X \,, Y $
\RightLabel{\fns{$$}}
\doubleLine
\UI$Z < Y \fCenter X$
\DisplayProof
 \\

 & \\
\end{tabular}
\end{center}

\noindent Below we list the rules for the operational connective (note that the latest three connectives with the name of the rule in brackets are those which do not belong to the language of the axioms that we have implicitly chosen).

\begin{center}
\begin{tabular}{rlrl}
\mc{4}{c}{\textbf{Propositions Operational Rules}} \\
 & & & \\

 \AXC{\phantom{$\gbot \fCenter \textrm{I}$}}
\LeftLabel{\fns$\bot_L$}
\UI$\bot \fCenter \textrm{I}$
\DisplayProof
 &
\AX$X \fCenter \textrm{I}$
\RightLabel{\fns$\bot_R$}
\UI$X \fCenter \bot$
\DisplayProof
 &
\AX$\textrm{I} \fCenter X$
\LeftLabel{\fns$\top_L$}
\UI$\top \fCenter X$
\DisplayProof
 &
\AXC{\phantom{$\textrm{I} \fCenter \top$}}
\RightLabel{\fns$\top_R$}
\UI$\textrm{I} \fCenter \top$
\DisplayProof
 \\

 & & & \\

\AX$A \,, B \fCenter X$
\LeftLabel{\fns$\pand_L$}
\UI$A \pand B \fCenter X$
\DisplayProof
 &
\AX$X \fCenter A$
\AX$Y \fCenter B$
\RightLabel{\fns$\pand_R$}
\BI$X \,, Y \fCenter A \pand B$
\DisplayProof
&
\AX$A \fCenter X$
\AX$B \fCenter Y$
\LeftLabel{\fns$\por_L$}
\BI$A \por B \fCenter X \,, Y$
\DisplayProof
 &
 \AX$X \fCenter A \,, B$
\RightLabel{\fns$\por_R$}
\UI$X \fCenter A \por B$
\DisplayProof
 \\

 & \\

\mc{2}{r}{
\AX$X \fCenter A$
\AX$B \fCenter Y$
\LeftLabel{\fns$\pra_L$}
\BI$A \pra B \fCenter X > Y$
\DisplayProof}
 &
\mc{2}{l}{
\AX$X \fCenter A > B$
\RightLabel{\fns$\pra_R$}
\UI$X \fCenter A \pra B$
\DisplayProof}
\\
\end{tabular}
\end{center}
\smallskip

\begin{center}
\begin{tabular}{rl}
\AX$B \fCenter Y$
\AX$X \fCenter A$
\LeftLabel{\fns$(\pla_L)$}
\BI$B \pla A \fCenter Y < X$
\DisplayProof
 &
\AX$Z \fCenter B < A$
\RightLabel{\fns$(\pla_R)$}
\UI$Z \fCenter B \pla A$
\DisplayProof
\\

 & \\

\AX$A > B \fCenter Z$
\LeftLabel{\fns$(\pdra_L)$}
\UI$A \pdra B \fCenter Z$
\DisplayProof
 &
\AX$A \fCenter X$
\AX$Y \fCenter B$
\RightLabel{\fns$(\pdra_R)$}
\BI$X > Y \fCenter A \pdra B$
\DisplayProof
 \\

 & \\

\AX$B < A \fCenter X$
\LeftLabel{\fns$(\pdla_L)$}
\UI$B \pdla A \fCenter X$
\DisplayProof
 &
\AX$Y \fCenter B$
\AX$A \fCenter X$
\RightLabel{\fns$(\pdla_R$)}
\BI$Y < X \fCenter B \pdla A$
\DisplayProof
 \\
\end{tabular}
\end{center}

{\commment{
\begin{center}
\begin{tabular}{rl@{}rl}
\mc{4}{c}{\textbf{Propositions Operational Rules}} \\
 & & & \\

 \AXC{\phantom{$\gbot \fCenter \textrm{I}$}}
\LeftLabel{\fns$\bot_L$}
\UI$\bot \fCenter \textrm{I}$
\DisplayProof
 &
\AX$X \fCenter \textrm{I}$
\RightLabel{\fns$\bot_R$}
\UI$X \fCenter \bot$
\DisplayProof
 &
\AX$\textrm{I} \fCenter X$
\LeftLabel{\fns$\top_L$}
\UI$\top \fCenter X$
\DisplayProof
 &
\AXC{\phantom{$\textrm{I} \fCenter \top$}}
\RightLabel{\fns$\top_R$}
\UI$\textrm{I} \fCenter \top$
\DisplayProof
 \\

 & & & \\

\AX$B < A \fCenter X$
\LeftLabel{\fns$(\pdla)$}
\UI$B \pdla A \fCenter X$
\DisplayProof
 &
\AX$Y \fCenter B$
\AX$A \fCenter X$
\RightLabel{\fns$(\pdla$)}
\BI$Y < X \fCenter B \pdla A$
\DisplayProof
&
\multirow{3}{*}{\AX$A \fCenter X$
\AX$B \fCenter Y$
\LeftLabel{\fns$\por$}
\BI$A \por B \fCenter X \,, Y$
\DisplayProof}
 &
\multirow{3}{*}{\AX$X \fCenter A \,, B$
\RightLabel{\fns$\por$}
\UI$X \fCenter A \por B$
\DisplayProof}
 \\

  & & & \\

\AX$A > B \fCenter Z$
\LeftLabel{\fns$(\pdra)$}
\UI$A \pdra B \fCenter Z$
\DisplayProof
 &
\AX$A \fCenter X$
\AX$Y \fCenter B$
\RightLabel{\fns$(\pdra)$}
\BI$X > Y \fCenter A \pdra B$
\DisplayProof

&

&

 \\

 & & & \\

\multirow{3}{*}{\AX$A \,, B \fCenter X$
\LeftLabel{\fns$\pand_L$}
\UI$A \pand B \fCenter X$
\DisplayProof}
 &
\multirow{3}{*}{\AX$X \fCenter A$
\AX$Y \fCenter B$
\RightLabel{\fns$\pand_R$}
\BI$X \,, Y \fCenter A \pand B$
\DisplayProof}
&
\AX$B \fCenter Y$
\AX$X \fCenter A$
\LeftLabel{\fns$(\pla)$}
\BI$B \pla A \fCenter Y < X$
\DisplayProof
 &
\AX$Z \fCenter B < A$
\RightLabel{\fns$(\pla)$}
\UI$Z \fCenter B \pla A$
\DisplayProof
\\

 & & &  \\

&

&
\AX$X \fCenter A$
\AX$B \fCenter Y$
\LeftLabel{\fns$\pra$}
\BI$A \pra B \fCenter X > Y$
\DisplayProof
 &
\AX$X \fCenter A > B$
\RightLabel{\fns$\pra$}
\UI$X \fCenter A \pra B$
\DisplayProof
\\

 & & & \\

\end{tabular}
\end{center}
}}

\subsection{Cut Elimination for PDL, Principal Stage}
\label{Appendix : Cut elimination for PDL, principal stage}

In the present subsection, we report on the verification of condition C'$_8$ of  the definition of quasi-proper  multi-type display calculi (cf.\ Section  \ref{PDL:ssec:quasi-def}).

Let us recall that C'$_8$ only concerns applications of the cut rules in which both occurrences of the given cut-term are {\em non parametric}.
Notice that non parametric occurrences of atomic terms of type $\mathsf{Fm}$ involve an axiom on at least  one premise, thus we are reduced to the following cases (the case of the constant $\bot$ is symmetric to the case of $\top$ and is omitted):
\begin{center}
\begin{tabular}{@{}lcrclcr@{}}
\bottomAlignProof
\AX$p \fCenter p$
\AX$p \fCenter p$
\BI$p \fCenter p$
\DisplayProof
 & $\rightsquigarrow$ &
\bottomAlignProof
\AX$p \fCenter p$
\DisplayProof
\ \ \ & \quad &\ \ \
\bottomAlignProof
\AX$\textrm{I} \fCenter \top$
\AXC{\ \ $\vdots$ \raisebox{1mm}{$\pi$}}
\noLine
\UI$\textrm{I} \fCenter X$
\UI$\top \fCenter X$
\BI$\textrm{I} \fCenter X$
\DisplayProof
 & $\rightsquigarrow$ &
\bottomAlignProof
\AXC{\ \ $\vdots$ \raisebox{1mm}{$\pi$}}
\noLine
\UI$\textrm{I} \fCenter X$
\DisplayProof
 \\
\end{tabular}
\end{center}
\noindent Notice that non parametric occurrences of any given (atomic) operational term $\pi$ of type $\mathsf{Act}$ are confined to axioms $\pi \vdash \pi$, so the proofs are analogous to the previous case of operational term $p$ of type $\mathsf{Fm}$ and they are omitted. In each of these cases, the cut in the original derivation is strongly-uniform by assumption, and is eliminated by the transformation.
As to cuts on non atomic terms, let us now restrict our attention to those cut-terms the main connective of which is $\mand_i, \mband_i, \mra_i, \mbra_i$  for $0\leq i \leq 1$. Here below we show the proofs only for the white heterogeneous connectives: the proofs for the black heterogeneous connectives are exactly the same modulo a uniform substitution of each white connective by the same black connective (both at the operational and structural level). 

\begin{center}
\begin{tabular}{@{}rcl@{}}
\bottomAlignProof
\AXC{\ \ \ $\vdots$ \raisebox{1mm}{$\pi_0$}}
\noLine
\UI$x \fCenter a$
\AXC{\ \ \ $\vdots$ \raisebox{1mm}{$\pi_1$}}
\noLine
\UI$y\ \fCenter\ b$
\BI$x \mAND_i y\ \fCenter\  a \mand_i b$
\AXC{\ \ \ $\vdots$ \raisebox{1mm}{$\pi_2$}}
\noLine
\UI$ a\mAND_i b\ \fCenter\ z$
\UI$ a \mand_i b\ \fCenter\ z$
\BI$x\mAND_i y\ \fCenter\ z$
\DisplayProof
 & $\rightsquigarrow$ &
\bottomAlignProof
\AXC{\ \ \ $\vdots$ \raisebox{1mm}{$\pi_1$}}
\noLine
\UI$y\ \fCenter\ b$
\AXC{\ \ \ $\vdots$ \raisebox{1mm}{$\pi_0$}}
\noLine
\UI$x \fCenter a$

\AXC{\ \ \,$\vdots$ \raisebox{1mm}{$\pi_2$}}
\noLine
\UI$ a \mAND_i b\ \fCenter\ z$
\UI$ a \ \fCenter\ z \mSBLA_i b$
\BI$ x  \fCenter\ z \mSBLA_i b$

\UI$ x \mAND_i b\ \fCenter\ z$
\UI$b\ \fCenter\ x\mBRA_i z$
\BI$y\ \fCenter\ x \mBRA_i z$
\UI$x \mAND_i y\ \fCenter\ z$
\DisplayProof
 \\
\end{tabular}
\end{center}

\begin{center}
\begin{tabular}{@{}rcl@{}}
\bottomAlignProof
\AXC{\ \ \ $\vdots$ \raisebox{1mm}{$\pi_1$}}
\noLine
\UI$y\ \fCenter\ a \mRA_{\!\!i}  b$
\UI$y\ \fCenter\ a \mra_{\!\!i}  b$

\AXC{\ \ \ $\vdots$ \raisebox{1mm}{$\pi_0$}}
\noLine
\UI$x \fCenter a$
\AXC{\ \ \ $\vdots$ \raisebox{1mm}{$\pi_2$}}
\noLine
\UI$b\ \fCenter\ z$
\BI$a \mra_{\!\!i}  b\ \fCenter\ x \mRA_{\!\!i} z$
\BI$y \fCenter x \mRA_{\!\!i}  z$
\DisplayProof
 & $\rightsquigarrow$ &
\bottomAlignProof
\AXC{\ \ \ $\vdots$ \raisebox{1mm}{$\pi_0$}}
\noLine
\UI$x \fCenter a$
\AXC{\ \ \ $\vdots$ \raisebox{1mm}{$\pi_1$}}
\noLine
\UI$y\ \fCenter\ a \mRA_{\!\!i}  b$
\UI$a \mBAND_i y \fCenter b$
\UI$a  \fCenter b \mSLA_i y$
\BI$x  \fCenter b \mSLA_i y$
\UI$x  \mBAND_i y \fCenter b$

\AXC{\ \ \ $\vdots$ \raisebox{1mm}{$\pi_2$}}
\noLine
\UI$b\ \fCenter\ z$
\BI$x \mBAND_i y\ \fCenter\ z$
\UI$y\ \fCenter\ x \mRA_{\!\!i}z$
\DisplayProof
 \\
\end{tabular}
\end{center}

\noindent In each  of these cases, the cut in the original derivation is strongly-uniform by assumption, and after the transformation, cuts of lower complexity are introduced which can be easily verified to be strongly-uniform for each $0\leq i\leq 1$.

Finally, let us consider the unary modalities test $\wn_i$ for $0\leq i\leq 1$, positive iteration $+$ and its left adjont $-$.
\begin{center}
\begin{tabular}{@{}rcl@{}}
\bottomAlignProof
\AXC{\ \ \ $\vdots$ \raisebox{1mm}{$\pi_1$}}
\noLine
\UI$X\ \fCenter\ A$
\UI$X\WN_i  \fCenter A \wn_i $
\AXC{\ \ \ $\vdots$ \raisebox{1mm}{$\pi_2$}}
\noLine
\UI$A \WN_i  \fCenter Y$
\UI$A \wn_i  \fCenter Y$
\BI$X \WN_i  \fCenter Y$
\DisplayProof
 & $\rightsquigarrow$ &
\bottomAlignProof
\AXC{\ \ \ $\vdots$ \raisebox{1mm}{$\pi_1$}}
\noLine
\UI$X \fCenter A$
\AXC{\ \ \,$\vdots$ \raisebox{1mm}{$\pi_2$}}
\noLine
\UI$A \WN_i  \fCenter Y$
\UI$A \fCenter Y\DWN_i $
\BI$X \fCenter Y\DWN_i $
\UI$X \WN_i \ \fCenter\ Y$
\DisplayProof
 \\
\end{tabular}
\end{center}

\begin{center}
\begin{tabular}{@{}rcl@{}}
\bottomAlignProof
\AXC{\ \ \ $\vdots$ \raisebox{1mm}{$\pi_1$}}
\noLine
\UI$\Pi \fCenter \alpha$
\UI$\Pi^\oplus \fCenter \alpha^+$
\AXC{\ \ \ $\vdots$ \raisebox{1mm}{$\pi_2$}}
\noLine
\UI$\alpha^\oplus \fCenter \Delta$
\UI$\alpha^+ \fCenter \Delta$
\BI$\Pi^\oplus \fCenter \Delta$
\DisplayProof
 & $\rightsquigarrow$ &
\bottomAlignProof
\AXC{\ \ \ $\vdots$ \raisebox{1mm}{$\pi_1$}}
\noLine
\UI$\Pi \fCenter \alpha$
\AXC{\ \ \,$\vdots$ \raisebox{1mm}{$\pi_2$}}
\noLine
\UI$\alpha^\oplus \fCenter \Delta$
\UI$\alpha \fCenter \Delta^\ominus$
\BI$\Pi \fCenter \Delta^\ominus$
\UI$\Pi^\oplus \fCenter \Delta$
\DisplayProof
 \\
\end{tabular}
\end{center}

\begin{center}
\begin{tabular}{@{}rcl@{}}
\bottomAlignProof
\AXC{\ \ \ $\vdots$ \raisebox{1mm}{$\pi_1$}}
\noLine
\UI$\Pi \fCenter \delta^\ominus$
\UI$\Pi \fCenter \delta^-$
\AXC{\ \ \ $\vdots$ \raisebox{1mm}{$\pi_2$}}
\noLine
\UI$\delta \fCenter \Delta$
\UI$\delta^- \fCenter \Delta^\ominus$
\BI$\Pi \fCenter \Delta^\ominus$
\DisplayProof
 & $\rightsquigarrow$ &
\bottomAlignProof
\AXC{\ \ \ $\vdots$ \raisebox{1mm}{$\pi_1$}}
\noLine
\UI$\Pi \fCenter \delta^\ominus$
\UI$\Pi^\oplus \fCenter \delta$
\AXC{\ \ \ $\vdots$ \raisebox{1mm}{$\pi_2$}}
\noLine
\UI$\delta \fCenter \Delta$
\BI$\Pi^\oplus \fCenter \Delta$
\UI$\Pi \fCenter \Delta^\ominus$
\DisplayProof
 \\
\end{tabular}
\end{center}

\noindent In each case above, the cut in the original derivation is strongly-uniform by assumption, and after the transformation, cuts of lower complexity are introduced which can be easily verified to be strongly-uniform for each $0\leq i\leq 1$ in the first proof and also for the remaining two proofs.

The remaining operational connectives are straightforward and left to the reader.

\newpage

\subsection{Completeness for PDL}
\label{Appendix : completeness for PDL}

\noindent
$\rule{146.2mm}{0.5pt}$

\textbf{K}\ \ \ $\alpha \mra (A \pra B) \vdash (\alpha \mra A) \pra (\alpha \mra B)$

\begin{center}
\AX$\alpha \fCenter \alpha$

\AX$\alpha \fCenter \alpha$
\AX$A \fCenter A$
\BI$\alpha \mra A \fCenter \alpha \mRA A$
\LeftLabel{\scriptsize$W \,,$}
\UI$\alpha \mra A \,, \alpha \mra (A \pra B) \fCenter \alpha \mRA A$
\UI$\alpha \mBAND (\alpha \mra A \,, \alpha \mra (A \pra B)) \fCenter A$
\AX$B \fCenter B$
\BI$A \pra B \fCenter \alpha \mBAND (\alpha \mra A \,, \alpha \mra (A \pra B)) > B$

\BI$\alpha \mra (A \pra B) \fCenter \alpha \mRA (\alpha \mBAND (\alpha \mra A \,; \alpha \mra (A \pra B)) > B)$
\UI$\alpha \mra A \,, \alpha \mra (A \pra B) \fCenter \alpha \mRA (\alpha \mBAND (\alpha \mra A \,, \alpha \mra (A \pra B)) > B)$
\UI$\alpha \mBAND (\alpha \mra A \,, \alpha \mra (A \pra B)) \fCenter \alpha \mBAND (\alpha \mra A \,, \alpha \mra (A \pra B)) > B$
\UIC{$\alpha \mBAND (\alpha \mra A \,, \alpha \mra (A \pra B)) \,, \alpha \mBAND (\alpha \mra A \,, \alpha \mra (A > B)) \fCenter B$}
\LeftLabel{\scriptsize$C \,,$}
\UIC{$\alpha \mBAND ((\alpha \mra A \,, \alpha \mra (A \pra B)) \,, (\alpha \mra A \,, \alpha \mra (A \pra B))) \fCenter B$}
\UI$(\alpha \mra A \,, \alpha \mra (A \pra B)) \,, (\alpha \mra A \,, \alpha \mra (A \pra B)) \fCenter \alpha \mRA B$
\UI$\alpha \mra A \,, \alpha \mra (A \pra B) \fCenter \alpha \mRA B$
\UI$\alpha \mra A \,, \alpha \mra (A \pra B) \fCenter \alpha \mra B$
\UI$\alpha \mra (A \pra B) \fCenter \alpha \mra A > \alpha \mra B$
\UI$\alpha \mra (A \pra B) \fCenter \alpha \mra A \pra \alpha \mra B$
\DisplayProof
\end{center}

\noindent
$\rule{146.2mm}{0.5pt}$

\textbf{Box-Choice}\ \ \ $(\alpha \gor \beta) \mra A \,\dashv\vdash\, (\alpha \mra A) \pand (\beta \mra A)$

\begin{center}
\AX$\alpha \fCenter \alpha$
\RightLabel{\scriptsize$W\between$}
\UI$\alpha \fCenter \alpha \gC \beta$
\UI$\alpha \fCenter \alpha \gor \beta$
\AX$A \fCenter A$
\BI$(\alpha \gor \beta) \mra A \fCenter \alpha \mRA A$
\UI$(\alpha \gor \beta) \mra A \fCenter \alpha \mra A$
\AX$\beta \fCenter \beta$
\RightLabel{\scriptsize$W\between$}
\UI$\beta \fCenter \alpha \gC \beta$
\UI$\beta \fCenter \alpha \gor \beta$
\AX$A \fCenter A$
\BI$(\alpha \gor \beta) \mra A \fCenter \beta \mRA A$
\UI$(\alpha \gor \beta) \mra A \fCenter \beta \mra A$
\BI$(\alpha \gor \beta) \mra A\,, (\alpha \gor \beta) \mra A \fCenter (\alpha \mra A) \pand (\beta \mra A)$
\LeftLabel{\scriptsize$C ,$}
\UI$(\alpha \gor \beta) \mra A \fCenter (\alpha \mra A) \pand (\beta \mra A)$
\DisplayProof
\\
\bigskip
\bigskip
\AX$\alpha \fCenter \alpha$
\AX$A \fCenter A$
\BI$\alpha \mra A \fCenter \alpha \mRA A$
\UI$\alpha \mBAND \alpha \mra A \fCenter A$
\UI$\alpha \fCenter A \mLA \alpha \mra A$
\AX$\beta \fCenter \beta$
\AX$A \fCenter A$
\BI$\beta \mra A \fCenter \beta \mRA A$
\UI$\beta \mBAND \beta \mra A \fCenter A$
\UI$\beta \fCenter A \mLA \beta \mra A$
\BI$\alpha \gor \beta \fCenter (A \mLA \alpha \mra A) \gC (A \mLA \beta \mra A)$
\RightLabel{\scriptsize{\emph{choice} $\vartriangleleft$}}
\UI$\alpha \gor \beta \fCenter A \mLA (\alpha \mra A \,, \beta \mra A)$
\UI$\alpha \gor \beta \mBAND (\alpha \mra A \,, \beta \mra A)\fCenter A$
\UI$\alpha \mra A \,, \beta \mra A \fCenter \alpha \gor \beta \mRA A$
\UI$(\alpha \mra A) \pand (\beta \mra A) \fCenter \alpha \gor \beta \mRA A$
\UI$(\alpha \mra A) \pand (\beta \mra A) \fCenter (\alpha \gor \beta) \mra A$
\DisplayProof
 \\
\end{center}


\noindent
$\rule{146.2mm}{0.5pt}$

\textbf{Box-Composition}\ \ \ $(\alpha \gsc \beta) \mra A \,\dashv\vdash\, \alpha \mra (\beta \mra A)$

\begin{center}
\begin{tabular}{lr}
\AX$\alpha \fCenter \alpha$
\AX$\beta \fCenter \beta$
\BI$\alpha \gSC \beta \fCenter \alpha \gsc \beta$
\AX$A \fCenter A$
\BI$(\alpha \gsc \beta) \mra A \fCenter (\alpha \gSC \beta) \mRA A$
\RightLabel{\scriptsize \emph{act}$ \vartriangleright$}
\UI$(\alpha \gsc \beta) \mra A \fCenter \alpha \mRA (\beta \mRA A)$
\UI$\alpha \mBAND (\alpha \gsc \beta) \mra A \fCenter \beta \mRA A$
\UI$\alpha \mBAND (\alpha \gsc \beta) \mra A \fCenter \beta \mra A$
\UI$(\alpha \gsc \beta) \mra A \fCenter \alpha \mRA \beta \mra A$
\UI$(\alpha \gsc \beta) \mra A \fCenter \alpha \mra (\beta \mra A)$
\DisplayProof

 & 

\AX$\alpha \fCenter \alpha$
\AX$\beta \fCenter \beta$
\AX$A \fCenter A$
\BI$\beta \mra A \fCenter \beta \mRA A$
\BI$\alpha \mra (\beta \mra A) \fCenter \alpha \mRA (\beta \mRA A)$
\RightLabel{\scriptsize{\emph{act}$ \vartriangleright$}}
\UI$\alpha \mra (\beta \mra A) \fCenter (\alpha \gSC \beta) \mRA A$
\UI$(\alpha \gSC \beta) \mBAND \alpha \mra (\beta \mra A) \fCenter A$
\UI$\alpha \gSC \beta \fCenter A \mLA \alpha \mra (\beta \mra A)$
\UI$\alpha \gsc \beta \fCenter A \mLA \alpha \mra (\beta \mra A)$
\UI$\alpha \gsc \beta \mBAND \alpha \mra (\beta \mra A) \fCenter A$
\UI$\alpha \mra (\beta \mra A) \fCenter \alpha \gsc \beta \mRA A$
\UI$\alpha \mra (\beta \mra A) \fCenter (\alpha \gsc \beta) \mra A$
\DisplayProof
 \\
\end{tabular}
\end{center}


\noindent
$\rule{146.2mm}{0.5pt}$

\textbf{Box-Test}\ \ \ $A\wn \mra B \,\dashv\vdash\, A \pra B$

\begin{center}
\begin{tabular}{lr}
\AX$A \fCenter A$
\AX$B \fCenter B$
\BI$A \pra B \fCenter A > B$
\RightLabel{\scriptsize{$\wn \vartriangleright$}}
\UI$A \pra B \fCenter A\WN \mRA B$
\UI$A\WN \mBAND A \pra B \fCenter B$
\UI$A\WN \fCenter B \mLA A \pra B$
\UI$A\wn \fCenter B \mLA A \pra B$
\UI$A\wn \mBAND A \pra B \fCenter B$
\UI$A \pra B \fCenter A\wn \mRA B$
\UI$A \pra B \fCenter A\wn \mra B$
\DisplayProof

& 

\AX$A \fCenter A$
\UI$A\WN \fCenter A\wn$
\AX$B \fCenter B$
\BI$A \wn \mra B \fCenter A \WN \mRA B$
\RightLabel{\scriptsize{$\wn \vartriangleright$}}
\UI$A \wn \mra B \fCenter A > B$
\UI$A \wn \mra B \fCenter A \pra B$
\DisplayProof
 \\
\end{tabular}
\end{center}


\noindent
$\rule{146.2mm}{0.5pt}$

\textbf{Box-Distributivity}\ \ \ $\alpha \mra (A \pand B) \,\dashv\vdash\, \alpha \mra A \pand \alpha \mra B$

\begin{center}
\AX$\alpha \fCenter \alpha$
\AX$A \fCenter A$
\LeftLabel{\scriptsize$W,$}
\UI$A\,, B \fCenter A$
\UI$A \pand B \fCenter A$
\BI$\alpha \mra (A \pand B) \fCenter \alpha \mRA A$
\UI$\alpha \mra (A \pand B) \fCenter \alpha \mra A$
\AX$\alpha \fCenter \alpha$
\AX$B \fCenter B$
\LeftLabel{\scriptsize$W,$}
\UI$A\,, B \fCenter B$
\UI$A \pand B \fCenter B$
\BI$\alpha \mra (A \pand B) \fCenter \alpha \mRA B$
\UI$\alpha \mra (A \pand B) \fCenter \alpha \mra B$
\BI$\alpha \mra (A \pand B)\,, \alpha \mra (A \pand B) \fCenter (\alpha \mra A) \pand (\alpha \mra B)$
\LeftLabel{\scriptsize$C ,$}
\UI$\alpha \mra (A \pand B) \fCenter (\alpha \mra A) \pand (\alpha \mra B)$
\DisplayProof
\\
\bigskip
\bigskip
\AX$\alpha \fCenter \alpha$
\AX$A \fCenter A$
\BI$\alpha \mra A \fCenter \alpha \mRA A$
\UI$\alpha \mBAND \alpha \mra A \fCenter A$

\AX$\alpha \fCenter \alpha$
\AX$B \fCenter B$
\BI$\alpha \mra B \fCenter \alpha \mRA B$
\UI$\alpha \mBAND \alpha \mra B \fCenter B$

\BI$(\alpha \mBAND \alpha \mra A)\,, (\alpha \mBAND \alpha \mra B) \fCenter A \pand B$
\LeftLabel{\scriptsize$mon \mband$}
\UI$\alpha \mBAND (\alpha \mra A\,, \alpha \mra B) \fCenter A \pand B$
\UI$\alpha \mra A\,, \alpha \mra B \fCenter \alpha \mRA A \pand B$
\UI$\alpha \mra A \pand \alpha \mra B \fCenter \alpha \mRA A \pand B$
\UI$\alpha \mra A \pand \alpha \mra B \fCenter \alpha \mra (A \pand B)$
\DisplayProof
 \\
\end{center}

\commment{
%
%


\begin{center}
{\footnotesize{
\AX$\alpha \fCenter \alpha$
\AX$A \fCenter A$
\BI$\alpha \mra A \fCenter \alpha \mRA A$
\AX$\alpha \fCenter \alpha$
\UI$\alpha^\oplus \fCenter \alpha^+$
\AX$A \fCenter A$
\AX$\alpha \fCenter \alpha$
\AX$A \fCenter A$
\BI$\alpha \mra A \fCenter \alpha \mRA A$
\BI$A \pra (\alpha \mra A) \fCenter A > (\alpha \mRA A)$
\BI$\alpha^+ \mra (A \pra (\alpha \mra A)) \fCenter \alpha^\oplus \mRA (A > (\alpha \mRA A))$
\RightLabel{\scriptsize{$Ind$}}
\BI$\alpha \mra A \,, \alpha^+ \mra (A \pra (\alpha \mra A)) \fCenter \alpha^\oplus \mRA A$
\UI$(\alpha \mra A) \pand (\alpha^+ \mra (A \pra (\alpha \mra A))) \fCenter \alpha^\oplus \mRA A$
\UI$(\alpha \mra A) \pand (\alpha^+ \mra (A \pra (\alpha \mra A))) \fCenter \alpha^\oplus \mra A$
\DisplayProof
}}
\end{center}

\begin{center}
{\footnotesize{
\AX$\alpha \fCenter \alpha$
\AX$A \fCenter A$
\BI$\alpha \mra A \fCenter \alpha \mRA A$
\UI$\alpha \mBAND \alpha \mra A \fCenter A$
\UI$\alpha \fCenter A \mLA \alpha \mra A$
\AX$\alpha \fCenter \alpha$
\UI$\alpha^\oplus \fCenter \alpha^+$
\AX$A \fCenter A$
\AX$\alpha \fCenter \alpha$
\AX$A \fCenter A$
\BI$\alpha \mra A \fCenter \alpha \mRA A$
\BI$A \pra (\alpha \mra A) \fCenter A > (\alpha \mRA A)$
\BI$\alpha^+ \mra (A \pra (\alpha \mra A)) \fCenter \alpha^\oplus \mRA (A > (\alpha \mRA A))$
\RightLabel{\scriptsize{$xxx$}}
\UI$\alpha^+ \mra (A \pra (\alpha \mra A)) \fCenter (\alpha^\oplus \mBAND A) > (\alpha \mRA A))$
\RightLabel{\scriptsize{$xxx$}}
\UI$\alpha^+ \mra (A \pra (\alpha \mra A)) \fCenter (\alpha \gSC \alpha^\oplus) \mRA A$
\UI$(\alpha \gSC \alpha^\oplus) \mBAND \alpha^+ \mra (A \pra (\alpha \mra A)) \fCenter A$
\UI$\alpha \gSC \alpha^\oplus \fCenter A \mLA \alpha^+ \mra (A \pra (\alpha \mra A))$
\LeftLabel{\scriptsize{\emph{Prefix}}}
\BI$\alpha^\oplus \fCenter (A \mLA \alpha \mra A) \gC (A \mLA \alpha^+ \mra (A \pra (\alpha \mra A)))$
\UI$\alpha^+ \fCenter (A \mLA \alpha \mra A) \gC (A \mLA \alpha^+ \mra (A \pra (\alpha \mra A)))$
\UI$\alpha^+ \fCenter A \mLA (\alpha \mra A \,, \alpha^+ \mra (A \pra (\alpha \mra A)))$
\UI$\alpha^+ \mBAND (\alpha \mra A \,, \alpha^+ \mra (A \pra (\alpha \mra A))) \fCenter A$
\UI$\alpha \mra A \,, \alpha^+ \mra (A \pra (\alpha \mra A)) \fCenter \alpha^+ \mRA A$
\UI$\alpha \mra A \pand \alpha^+ \mra (A \pra (\alpha \mra A)) \fCenter \alpha^+ \mRA A$
\DisplayProof
}}
\end{center}

\begin{center}
{\fns{

\AXC{$\cdots$}

\AX$\alpha \mra A \fCenter \alpha \mra A $

\AX$A \fCenter A$
\AX$\alpha \fCenter \alpha$
\AX$A \fCenter A$
\BI$\alpha \mra A \fCenter \alpha \mRA A$
\UIC{$ \vdots $}
\UI$\alpha \mra (\alpha^{n-1} \mra A) \fCenter \alpha^{(n)} \mRA A$

\BI$A \pra (\alpha \mra (\alpha^{n-1} \mra A)) \fCenter A > (\alpha^{(n)} \mRA A)$

\BI$(\alpha \mra A) \pra (\alpha \mra(\alpha \mra A)) \fCenter (\alpha \mra A) > (A > \alpha^{n} \mRA A)$

\UI$\alpha \mra (A \pra (\alpha \mra A)) \fCenter (\alpha \mra A) > (\alpha^{n} \mRA A)$
\UI$\alpha \mra A \,, \alpha \mra (A \pra (\alpha \mra A)) \fCenter \alpha^{n} \mRA A$
\UI$\alpha^{n} \mBAND (\alpha \mra A \,, \alpha \mra (A \pra (\alpha \mra A)))\fCenter A$
\UI$\alpha^{n} \fCenter A \mLA (\alpha \mra A \,, \alpha \mra (A \pra (\alpha \mra A)))$
\UI$\alpha^{(n)} \fCenter [A \mLA (\alpha \mra A \,, \alpha \mra (A \pra (\alpha \mra A)))]^\ominus$

\AXC{$\cdots$}

\TIC{$\alpha^\oplus \vdash A \mLA (\alpha \mra A \,, \alpha^+ \mra (A \pra (\alpha \mra A)))$}
\UIC{$\alpha^+ \vdash A \mLA (\alpha \mra A \,, \alpha^+ \mra (A \pra (\alpha \mra A)))$}
\UI$\alpha^+ \mBAND (\alpha \mra A \,, \alpha^+ \mra (A \pra (\alpha \mra A))) \fCenter A$
\UI$\alpha \mra A \,, \alpha^+ \mra (A \pra (\alpha \mra A)) \fCenter \alpha^+ \mRA A$
\UI$\alpha \mra A \pand \alpha^+ \mra (A \pra (\alpha \mra A)) \fCenter \alpha^+ \mRA A$
\DisplayProof
}}
\end{center}

\textbf{Base.} For $n=1$, the derivation is the following:

\begin{center}
{\fns{
\AXC{\ \ \,$\pi_{2.1}$}
\noLine
\UI$\alpha \fCenter \alpha$

\AXC{\ \ \,$\pi_{2.1}$}
\noLine
\UI$\alpha \fCenter \alpha$
\AXC{\ \ \,$\pi_{2.1}$}
\noLine
\UI$A \fCenter A$
\BI$\alpha\mra A \fCenter \alpha\mRA A$
\UI$\alpha\mBAND \alpha\mra A \fCenter A$
\AXC{\ \ \,$\pi_{2.1}$}
\noLine
\UI$\alpha \fCenter \alpha$
\AXC{\ \ \,$\pi_{2.1}$}
\noLine
\UI$A \fCenter A$
\BI$\alpha\mra A \fCenter \alpha\mRA A$
\BI$A \pra (\alpha\mra A) \fCenter (\alpha\mBAND \alpha\mra A) > (\alpha\mRA A)$
\LeftLabel{notation}
\UI$B \fCenter (\alpha\mBAND \alpha\mra A) > (\alpha\mRA A)$

\BI$\alpha\mra B \fCenter \alpha\mRA ((\alpha\mBAND \alpha\mra A) > (\alpha\mRA A))$

\UI$\alpha\mBAND \alpha\mra B \fCenter (\alpha\mBAND \alpha\mra A) > (\alpha\mRA A)$
\UI$(\alpha\mBAND \alpha\mra A) \,, (\alpha\mBAND \alpha\mra B) \fCenter \alpha\mRA A$
\UI$\alpha\mBAND (\alpha\mra A \,, \alpha\mra B) \fCenter \alpha\mRA A$
\UI$\alpha\mBAND (\alpha\mra A \,, \alpha\mra B) \fCenter \alpha\mra A$

\UI$\alpha\mra A \,, \alpha\mra B \fCenter \alpha\mRA \alpha\mra A$
\UI$\alpha\mra A \,, \alpha\mra B \fCenter \alpha\mra (\alpha\mra A)$
\RightLabel{notation}
\UI$\underbrace{\ls\alpha\rs}_{1} A \,, \underbrace{\ls\alpha\rs}_{1} B \fCenter \underbrace{\ls\alpha\rs \ls\alpha\rs}_{2} A$
\DisplayProof
}}
\end{center}

\begin{center}
{\scriptsize{
\AXC{$\pi_1$}
\noLine
\UIC{$\vdots$}
\noLine
\UIC{$\alpha^{<1>} A \,, \alpha^{<1>} B \fCenter \alpha^{<2>} A$}
\AXC{$\pi_2$}
\noLine
\UIC{$\vdots$}
\noLine
\UIC{$\alpha^{<2>}A \,, \alpha^{<2>} B \fCenter \alpha^{<3>} A$}
\UIC{$\alpha^{<2>} A \fCenter \alpha^{<3>} A < \alpha^{<2>} B$}
\RightLabel{\emph{Cut$_1$}}
\BIC{$\alpha^{<1>} A \,, \alpha^{<1>} B \fCenter \alpha^{<3>} A < \alpha^{<2>} B$}
\UIC{$\alpha^{<1>} A \,, \alpha^{<1>} B \,, \alpha^{<2>} B \fCenter \alpha^{<3>} A$}
\AXC{$\pi_3$}
\noLine
\UIC{$\vdots$}
\noLine
\UIC{$\alpha^{<3>}A \,, \alpha^{<3>} B \fCenter \alpha^{<4>} A$}
\UIC{$\alpha^{<3>} A \fCenter \alpha^{<4>} A < \alpha^{<3>} B$}
\RightLabel{\emph{Cut$_2$}}
\BIC{$\alpha^{<1>} A \,, \alpha^{<1>} B \,, \alpha^{<2>} B \fCenter \alpha^{<4>} A < \alpha^{<3>} B$}
\UIC{$(\alpha^{<1>} A \,, \alpha^{<1>} B \,, \alpha^{<2>} B) \,, \alpha^{<3>} B \fCenter \alpha^{<4>} A$}

\AXC{$\pi_i$}
\noLine
\UIC{$\vdots$}
\noLine
\UIC{\phantom{A}}
\noLine
\UIC{$\cdots$}
\AXC{$\pi_n$}
\noLine
\UIC{$\vdots$}
\noLine
\UIC{$\alpha^{<n>} A \,, \alpha^{<n>} B \fCenter \alpha^{<n+1>} A$}
\UIC{$\alpha^{<n>} A \fCenter \alpha^{<n+1>} A < \alpha^{<n>} B$}
\RightLabel{\emph{Cut$_{n-1}$}}
\dashedLine
\TIC{$\alpha^{<1>} A \,, \alpha^{<1>} B\,, \ldots \,, \alpha^{<n-1>} B \fCenter \alpha^{<n+1>} A < \alpha^{<n>} B$}
\UIC{$\underbrace{\ls\alpha\rs}_{1} A \,, \underbrace{\ls\alpha\rs}_{1} B\,, \ldots \,, \underbrace{\ls\alpha\rs \ldots \ls\alpha\rs}_{n-1} B \,, \underbrace{\ls\alpha\rs \ldots \ls\alpha\rs}_{n} B \fCenter \underbrace{\ls\alpha\rs \ls\alpha\rs \ldots \ls\alpha\rs}_{n+1} A$}
\DisplayProof
}}
\end{center}

}

\bibliography{BIB}
 \bibliographystyle{plain}

\end{document}